\newcommand{\rsmp}[2][ \Pi_n]{
    \varrho^\gamma_{#1}(#2)	
}
\newcommand{\Ind}[1]{\mathbbm{1}_{#1} }
\newcommand{\rr}[1]{\textcolor{black}{#1}}
\newcommand{\anv}{\mathtt{ a^*_{NV}}}
\newcommand{\alc}{\mathtt{ a^*_{LC}}}
\newcommand{\ars}{\mathtt{ a^*_{RS}}}
\newcommand{\nX}{\tilde{X}_n}
\newcommand{\nY}{\tilde{Y}_n}
\newcommand{\scKL}{\text{\sc KL}}
\newcommand{\qrsa}{Q^*_{a, \gamma}(\theta|\nX)}
\newcommand{\qrs}{Q^*_{a', \gamma}(\theta|\nX)}
\newcommand{\qnv}{Q^*(\theta|\nX)}
\newcommand\redsout{\bgroup\markoverwith{\textcolor{red}{\rule[0.5ex]{2pt}{0.4pt}}}\ULon}
\newcommand{\argmin}{\text{argmin}}
\newcommand{\argmax}{\text{argmax}}
\numberwithin{equation}{section}
\theoremstyle{plain}
\newtheorem{definition}{Definition}[section]
\newtheorem{lemma}{Lemma}[section]
\newtheorem{theorem}{Theorem}[section]
\newtheorem{conjecture}{Conjecture}
\newtheorem{corollary}{Corollary}[section]
\newtheorem{proposition}{Proposition}[section]
\newtheorem{assumption}{Assumption}[section]
\title{On the Statistical Consistency of Risk-Sensitive Bayesian Decision-Making}
\author{Prateek Jaiswal{$^\star$}, Harsha Honnappa{$^\star$} and Vinayak A. Rao{$^\dag$} }
\date{$^\star$\{jaiswalp,honnappa\}@purdue.edu School of Industrial Engineering, Purdue University\\$^\dag$\{varao@purdue.edu\} Department of Statistics, Purdue University.}
\begin{document}
\maketitle

\begin{abstract}

We study data-driven decision-making problems in the Bayesian framework, where the expectation in the Bayes risk is replaced by a risk-sensitive entropic risk measure. We focus on problems where calculating the posterior distribution is intractable, a typical situation in modern applications with large datasets and complex data generating models. We leverage a dual representation of the entropic risk measure to introduce a novel risk-sensitive variational Bayesian (RSVB) framework for jointly computing a risk-sensitive posterior approximation and the corresponding decision rule. The proposed RSVB framework can be used to extract computational methods for doing risk-sensitive approximate Bayesian inference. We show that our general framework includes two well-known computational methods for doing approximate Bayesian inference viz. \textit{naive} VB and \textit{loss-calibrated} VB. We also study the impact of these computational approximations on the predictive performance of the inferred decision rules and values. We compute the convergence rates of the RSVB approximate posterior and also of the corresponding optimal value and decision rules.  We illustrate our theoretical findings in both parametric and nonparametric settings with the help of three examples:  the single and multi-product newsvendor model and Gaussian process classification.

\end{abstract}



\section{Introduction}

This paper focuses on \textit{risk-sensitive} Bayesian decision-making, considering objective functions of the form
\begin{align}\tag{SO}
	\min_{a \in \sA}\rsmp{R(a,\theta)}
    := \frac{1}{\gamma}\log \bbE_{\Pi_n}[\exp(\gamma R(a,\theta))].
\end{align}
Here $\sA$
is the decision/action space, $\theta$ is a random model parameter lying in an arbitrary measurable space $(\Theta,\mathcal{T})$, and $R(a,\theta):\sA\times \Theta \mapsto  \bbR$  is a problem-specific model risk function.
The distribution $\Pi_n$ is the Bayesian posterior distribution over the parameters given observations $\nX$: \(\Pi_n(\theta) := \Pi(\theta|\nX)\),  while the scalar $\gamma \in \bbR$
 is user-specified and 
 characterizes the sensitivity of the decision-maker (DM) to the distribution $\Pi_n$.  
 Recall that the posterior distribution is obtained by updating a prior probability distribution $\Pi(\theta)$, capturing subjective beliefs of the decision maker over $\theta$,  according to the Bayes rule 
 \begin{align} 
     d\Pi(\theta|\nX)= \frac{ d\Pi(\theta)p^n_{\theta}(\nX)}{\int_{\Theta} d\Pi(\theta)p^n_{\theta}(\nX) },
     \label{eq:Post} 
 \end{align}
 where $p^n_{\theta}(\nX)$ is the  likelihood of observing $\nX$. 
 
 
 The functional $\varrho_\cdot^\gamma$ is also known as the {\it entropic risk measure}, and models a range of risk-averse or risk-seeking behavior in a succinct manner through the parameter $\gamma$. Consider only strictly positive $\gamma$, and observe that
 \begin{align*}
     \lim_{\gamma \downarrow 0} \frac{1}{\gamma}\log \bbE_{\Pi_n}[\exp(\gamma R(a,\theta))] = \mathbb E_{ \Pi_n}(R(a,\theta));
 \end{align*}
 that is, there is no sensitivity to potential risks due to large tail effects and the decision-maker is risk neutral. On the other hand,
 \begin{align*}
     \lim_{\gamma \to +\infty} \rsmp{R(a,\theta)} = \text{ess}\sup_{\Pi_n}(R(a,\theta)),
 \end{align*}
the essential supremum of the model risk $R(a,\theta)$. In other words, a decision maker is completely risk averse and anticipates the worst possible realization ($\Pi_n$-almost surely). While similar conclusions can be drawn when $\gamma < 0$, resulting in risk-seeking behavior, we restrict ourselves to $\gamma>0$ in  this paper. Observe that (SO) strictly generalizes the standard Bayesian decision-theoretic formulation of a decision-making problem, where the goal is to solve $\min_{a \in \mathcal A} \mathbb E_{\Pi_n}[R(a,\theta)]$. Furthermore, it also coincides with other risk-based Bayesian methods, such as the  penalized posterior variance method studied in~\cite{Lai2011} for solving the Markowitz portfolio optimization problem, under certain parameterizations. More precisely, for $R(a,\theta)=\frac{1}{\gamma}\log \bbE_{P_{\theta}}[\exp(\gamma \ell(a,\xi))]$ 
(for any loss function $\ell(\cdot,\cdot)$) and small, but strictly positive $\gamma$, a Taylor expansion of $\rsmp{R(a,\theta)}$ straightforwardly shows that (SO) is equivalent to problem (3.1) in~\cite{Lai2011}. 

The risk-sensitive formulation (SO) is very general and can be used to model a wide variety of decision-making problems in  operations research/management science~\citep{Sc1959b,Chu2008,LuShSh2015}, simulation optimization~\citep{Chick2006,wu2018bayesian}, and finance~\citep{Lai2011,Bauder2020,Bodnar2017}. Moreover, it presents a natural way to address epistemic model uncertainty by being Bayesian and risk sensitive. 
%
%
However, solving (SO) to compute an optimal decision over $\mathcal{A}$ is challenging. The difficulty mainly stems from the fact that, with the exception of conjugate priors, the posterior distribution in~\eqref{eq:Post} is an intractable quantity.
The  use of conjugate priors is restrictive and moreover, for many important likelihood models, they often do not exist. Canonically, posterior  intractability is addressed using either a sampling- or optimization-based approach. Sampling-based approaches, such as Markov chain Monte Carlo (MCMC), offer a tractable way to compute the integrals and theoretical guarantees of exact inference in the large computational budget limit. However, these asymptotic guarantees are offset by issues like poor mixing, large variance and complex diagnostics in practical settings with finite computational budgets.

In response, optimization-based methods such as variational Bayes (VB) or variational inference (VI) have emerged as a popular alternative~\citep{Bl2017}. The VB approximation of the true posterior is a tractable distribution, chosen from a `simpler' family of distributions known as variational family, by minimizing the discrepancy between the true posterior and members of that family.  Kullback-Liebler (\scKL) divergence is the most often used measure of the approximation discrepancy, although other divergences (such as the $\alpha$-R\'enyi divergence~\citep{LiTu2016,TuSh2011a,jaiswal2019asymptotic}) have been used. The minimizing member, termed {the} VB approximate posterior, {can be} used as a proxy for the true posterior. Empirical studies have shown that VB methods are computationally faster and far more scalable to higher-dimensional problems and large datasets.
Theoretical guarantees, such as large sample statistical inference, have been a topic of recent interest in theoretical statistics community, with asymptotic properties such as convergence rate and asymptotic normality of the VB approximate posterior recently established in~\citep{Zhang2018,Pati18} and~\cite{WaBl2017} respectively.

Our ultimate goal is not to {merely} approximate the posterior distribution, but to also make decisions when that posterior is intractable. A naive approach would be to plug in the VB approximation in place of the true posterior in  (SO) and compute the optimal decision. However, it  has been noted in~\citep{LaSiGh2011} that such a naive loss-unaware approach can be suboptimal. In particular,~\cite{LaSiGh2011} demonstrated, through an example, that a naive posterior approximation only captures the most dominant mode of the true posterior which may not be relevant from decision-making perspective. Consequently, they proposed a loss-calibrated variational Bayesian (LCVB) algorithm for solving Bayesian decision making problems where the underlying risk function is discrete.~\cite{kusmierczyk2019variational} extended their approach to continuous risk functions. Despite these algorithmic advances in developing decision-centric variational Bayesian methods, their statistical properties such as asymptotic consistency and convergence rates of the loss-aware posterior  approximation and the associated  decision  rule are  not well understood. In fact, it is not even clear that the convergence rates of VB approximate posterior established in~\cite{Zhang2018,Pati18} can be used to establish statistical guarantees on the decision rules learnt using the n\"aive approach. With an  aim to address these gaps, we summarize our contribution in this  paper below:
\begin{enumerate}
    \item We introduce a minimax optimization framework titled `risk sensitive variational Bayes'  (RSVB), extracted from the dual representation  of (SO) using {the so-called} Donsker-Varadhan variational free-energy principle~\citep{donsker1983asymptotic}. 
    The decision-maker computes a risk-sensitive approximation to the true posterior (termed as RSVB posterior) and the decision rule simultaneously  by solving a minimax optimization problem. Moreover, for $\gamma\to 0^+$ and $\gamma=1$, we recover the naive and LCVB approaches as special cases of RSVB.  
    \item We identify verifiable regularity conditions on the  prior, likelihood model and the risk function 
    {under which} the RSVB posterior enjoys the same rate of {convergence as the true posterior} to a Dirac delta distribution {concentrated} at the true model parameter $\theta_0$, as the sample size increases. Using this result, we also prove the rate of convergence of the RSVB decision rule, when the decision space $\sA$ is compact. Moreover, our theoretical results directly imply  the asymptotic properties of the LCVB posterior and the associated decision rule. It is also worth noting that our results are applicable to non-parametric problems such as Gaussian process classification, where the parameter space is infinite-dimensional, {as well as non independent and identically distributed data generating processes}. Moreover, our analysis also recovers consistency and rate of convergence of decision-rules under the `true' posterior distribution as a special case.
    \item We demonstrate our theoretical results with help of three applications:
    \begin{enumerate}
        \item First, we consider {the} classic single-product newsvendor problem and verify all the regularity conditions required to establish the convergence rate of the RSVB posterior and  the decision rule. We recover the frequentist rate of convergence $\sqrt{n}$ upto logarithmic factor. Moreover, we present simulation results demonstrating the interplay between the risk-sensitive parameter $\gamma$ and number of samples $n$.
        \item Second, we   consider the multi-product newsvendor problem and establish the rate of convergence of the corresponding RSVB posterior and decision rule. Here also, we recover the frequentist rate of convergence $\sqrt{n}$ upto logarithmic factor.
        \item Finally, we consider a binary Gaussian process classification problem, where  the model parameter $\theta$ lie in a set of continuous functions on a compact subset of $\bbR^d$. We construct a wavelet prior and prove all the regularity conditions and compute the rate of convergence  of  the RSVB posterior (on function space) and the decision rule. The rate of convergence of the RSVB posterior matches to that of the true posterior as established in~~\cite[Theorem 4.5]{Zanten08} for the same wavelet prior.
    \end{enumerate}
    \end{enumerate}
    
In our theoretical analysis, we mainly establish three important results. First, in Theorem~\ref{thrm:thm1}, we compute a bound on the expected distance of a model from the true model, where expectation is taken with respect to the RSVB posterior. The bound depends on the risk sensitivity parameter $\gamma$ and the number of samples $n$, and is a sum of two terms: first one quantifies the rate of convergence of the true posterior and  the second one is a consequence of the variational approximation. We further establish regularity conditions on the variational family to compute the rate of convergence of the second term in the bound. In the next two results, we use Theorem~\ref{thrm:thm1} to derive high probability bounds on the optimality gaps in values (Theorem~\ref{thrm:OGValue}) and decisions (Theorem~\ref{thrm:OGdecision}) computed using the RSVB approach. We define optimality gap in decisions as the deviation of the true  optimal decision (when true model is known) from the RSVB decision and define optimality gap in values as the absolute difference between oracle risk $R(\cdot,\theta_0)$ evaluated at true and RSVB decision rules. In our simulation results, we first demonstrate the consistency of the RSVB decision with respect to $n$ for various values of $\gamma$. We then demonstrate the effect of changing $\gamma$ on the  optimality gaps and the variance of the RSVB posterior for a given $n$. In particular, we observe that for smaller $n$, increasing $\gamma$ (after a certain value) result into a significantly more risk-averse decision, however the effect of increasing $\gamma$ on risk-averse decision-making reduces as $n$ increases.

Here is a brief roadmap for the rest of the paper. In the next section
we provide a literature survey of relevant results from machine learning, theoretical statistics and operations research, placing our results in appropriate context. In Section~\ref{sec:b-VB}, we present the problem formulation and introduce RSVB framework with relevant notations, definitions and regularity conditions. We develop our theoretical
results in Section~\ref{sec:FSB}. Thereafter, in Section~\ref{sec:SpecialRSVB}, we discuss naive and loss-calibrated VB as special cases of RSVB. We then illustrate the bounds obtained in Section~\ref{sec:FSB} by specializing the results to the single and multi-product newsvendor problem and Gaussian  process classification problem in Section~\ref{sec:App} and also present some numerical results. We end with
concluding remarks in Section~6.

\section{Existing literature and our work}~\label{sec:lit-survey}
Our paper fits in with a growing body of work in developing rigorous theoretical understanding of variational Bayesian methods in statistics and machine learning. Moreover, our proposed methodology also contributes to the work in machine learning and operations research
that lies
at the intersection of decision-making under uncertainty and
statistical estimation. 

The primary goal in data-driven
decision-making is to learn empirical decision-rules (or
\textit{predictive prescriptions} as~\cite{BeKa2014} term them) $a^*(\tilde X_n)$ that prescribes a decision, given an observation of the covariates $\tilde X_n$. Early work in this direction, including classic work by Herbert Scarf on Bayesian solutions to the newsvendor problem~\citep{Sc1960}, focused on two-stage solutions - estimation followed by optimization. Our setting is most related to recent work on {\it Bayesian risk optimization} (BRO) in~\cite{wu2018bayesian,zhou2017simulation}. In BRO, the authors consider optimal decision-making using various coherent risk measures computed under the posterior distribution. The authors establish several important results, including that the optimal values and decisions are asymptotically consistent as the sample size tends to infinity, and central limit theorems for these quantities. However, there are substantial differences with our paper. First, all of the analysis in~\cite{wu2018bayesian} presumes that the posterior risk measures are actually computable. The authors do not address the critical computational questions surrounding Bayesian methods or the impact of (inevitable) computational approximations on BRO -- indeed, this is not their focus. Second, extended coherent risk measures are not considered (in particular, the log-exponential risk measure used here), and it is unclear if the asymptotic results continue hold otherwise. Third, while we use a risk measure to derive the computational framework (RSVB), the focus in~\cite{wu2018bayesian} is purely on the analytical properties of optimal decisions.    

More recently, there has been significant interest in methods that use empirical risk minimization (ERM) or sample average approximation (SAA) for directly estimating decision-rules that optimize Monte Carlo or empirical approximations~\citep{BeKa2014,BeKa2016,BaRu2014,bertsimas2018optimization,deng2018coalescing,elmachtoub2017smart,wilder2018melding}. The survey by \cite{homem2014monte} consolidates recent results on Monte Carlo methods for stochastic optimization. It is important to note that this recent surge of work in
data-driven decision-making has largely focused on explicit black-box
models. On the other hand, there are many
situations where optimal decisions must be made in the presence of a
well-defined parametrized stochastic model. Bayesian methods are a natural means for estimating distributions over the 
parameters of a stochastic model; though, as noted before, the computational complexity of Bayesian algorithms can be high. The interplay between optimization
and estimation, in the sense of discovering predictive prescriptions
for Bayesian models has largely been ignored. Furthermore, as~\cite{LiSh2005} show in the newsvendor
context, SEO methods can be suboptimal in terms of expected regret and
long-term average losses. \cite{LiSh2005} introduced \textit{operational
	statistics} (OS) as an alternative to SEO (see \cite{Chu2008,LuShSh2015} as well), whereby the optimal empirical order quantity
is determined as a function of an optimization parameter that can be
determined for each sample size. OS has demonstrably better
performance, especially on single parameter newsvendor
problems (though there is much less known about its statistical
properties). 

In the machine learning literature, \cite{LaSiGh2011} observe that
calibrating a Gaussian process classification algorithm to a fixed loss function can improve classification performance over a loss-insensitive
algorithm -- indeed, this is the first documented presentation of the LCVB algorithm. Similarly, surrogate loss
functions~\citep{BaJoMc2006,TaChKoGu2005} that are regularized upper
bounds that depend on the cost function, also implicitly loss-calibrate
frequentist classification algorithms.

While standard VB methods for posterior estimation have been extensively
used in machine learning \citep{Bl2017}, it is only recently that the
theoretical questions surrounding VB have been addressed~\cite{WaBl2017,Zhang2018,Pati18,jaiswal2019asymptotic,Yang2020,Alquier2020,knoblauch2019frequentist,ChriefAbdellatif2018,banerjee2021pac}. In
particular, we note~\cite{WaBl2017} who prove asymptotic consistency
of VB in the large sample limit, \cite{Zhang2018} and~\cite{Pati18} on the other
hand establish bounds on the rate of convergence of the VB posterior
to the `true' posterior providing a more refined analysis, and~\cite{jaiswal2019asymptotic} where asymptotic consistency of $\alpha$-R\'enyi VB was demonstrated. Our analysis in this
paper, extends these results to establish convergence rates of the approximate posterior and learnt decision rules in risk-sensitive variational Bayesian decision-making framework. These bounds, in turn, are complementary to large sample analyses in~\cite{JaHoRa2019b}.

\section{Problem Setup}~\label{sec:b-VB}
Let $(\bigotimes_n\mathcal{X},\mathcal{S}^n,P_{\theta}^n)$ be a measure space with sigma-algebra $\mathcal{S}^n$ generated by $\bigotimes_n\mathcal{X}$, where, in general,  $\bigotimes_n A$ denote the $n$-fold product of a set $A$. Let $\tilde X_n := \{\xi_1,\ldots,\xi_n\} \subset \bigotimes_n\mathcal{X}$ represent a set of $n$ samples from the true model $P_{\theta_0}^n (\equiv P_{0}^n)$ with parameter $\theta_0 \in \Theta$.  Denoting the likelihood of observing $\nX$ as $p_{\theta}^n(\nX)$ and the {\it prior} distribution $\Pi(\theta)$, we define the {\it posterior distribution} as 
     \(d\Pi(\theta | \nX) =\frac{p_{\theta}^n(\nX) d\Pi(\theta)}{\int_{\Theta}p_{\theta}^n(\nX) d\Pi(\theta)}.\)
       We also write $\Pi(\theta|\nX)$ as $\Pi_n$ for brevity. Moreover, we denote the  corresponding prior and posterior density (if they exist) as $\pi(\cdot)$ and $\pi(\cdot|\nX)$.

As noted in the introduction, our objective is to optimize the posterior log-exponential or entropic risk measure of $R(a,\theta)$, that is
\begin{align}
    \tag{SO}
\min_{a \in \sA} \varrho^\gamma_{\Pi_n}(a) = \frac{1}{\gamma} \log \bbE_{\Pi_n}[e^{\gamma R(a,\theta)}], \text{ where $\gamma \in \bbR $.}
\end{align}

 In practical settings, the posterior $\Pi(\theta|\nX)$ typically cannot be easily computed, and decision makers are often led to restrictive modeling choices such as assuming the likelihood function has a conjugate prior. 
 Nonetheless, incorporating non-conjugate priors and complicated hierarchical models is critical for realizing the full utility of decision-theoretic Bayesian methods - however this entails the use of computational approximations. 
 Therefore, in the next paragraph we introduce a {{framework}} from which can be extracted computational methods for approximately computing and optimizing posterior decision risk.

\subsection{Risk-Sensitive Variational Bayes}\label{sec:rsvb}
Our approach exploits the dual representation of the log-exponential risk measure in~(SO), which is convex (or extended coherent)~\citep{rockafellar2007coherent,follmer2011entropic}. From the Donsker-Varadhan variational free energy principle~\citep{donsker1975asymptotic,donsker1975asymptoticb,donsker1976asymptotic,donsker1983asymptotic} we observe that,
\begin{align}\tag{DV}
    \varrho^\gamma_{\Pi_n}(a) &=\begin{cases}
        \min_{Q \in \mathcal M} \left\{ \mathbb
        E_Q[R(a,\theta)] - \frac{1}{\gamma}
        \text{KL}(Q\|\Pi_n)\right\}~&\gamma < 0,\\
        \max_{Q \in \mathcal M} \left\{ \mathbb
        E_Q[R(a,\theta)] - \frac{1}{\gamma}
        \text{KL}(Q||\Pi_n) \right\}~&\gamma > 0,
    \end{cases}
\end{align}
where $\mathcal M$ is the set of all distribution functions that are absolutely continuous with respect to the posterior distribution $\Pi_n$ and `$\textsc{KL}$' represents the Kullback-Leibler divergence. Formally, 
    for any two distributions $P$ and $Q$ defined on measurable space $(\Theta,\sT)$ , the $\scKL$ divergence is defined as
    \begin{align}
        \scKL(Q\|P)= \begin{cases}
            \int_{\Theta} dQ(\theta)\log \frac{dQ(\theta)}{dP(\theta)} &\text{~if } Q\ll P,
            \\
            \infty &\text{ otherwise }, 
        \end{cases}
    \end{align}
    
    where $Q\ll P$ denotes that measure $Q$ is absolutely continuous with respect to $P$. 
 Notice that this dual formulation exposes the reason we choose to use the log-exponential risk -- the right hand side provides a combined assessment of the risk associated with model estimation (computed by the KL divergence $\scKL(Q\|\Pi_n)$) and the decision risk under the estimated posterior $Q$ (computed by $\bbE_Q[R(a,\theta)]$). 

In this paper, we restrict our analyses to the risk-averse case, that  is $\gamma>0$. However, it can be extended easily to the case when $\gamma<0$ to obtain similar theoretical insights. 

As stated above, the reformulation presented in (DV) offers no computational gains. However, restricting ourselves to an appropriately chosen subset $\mathcal Q \subset \mathcal M$, that consists of distributions where the integral $\mathbb E_q[R(a,\theta)]$ can be tractably computed, we immediately obtain a  \textit {risk-sensitive variational Bayesian} (RSVB) formulation of~(DV):
\begin{align}
    \tag{RSVB}
    \frac{1}{\gamma}
    \log \mathbb \bbE_{\Pi_n}\left[e^{\gamma R(a,\theta)}\right] 
    \geq \max_{Q \in \mathcal Q} \left\{ \mathbb
     \bbE_Q[ R(a,\theta)] - \frac{1}{\gamma} 
    \text{KL}(Q||\Pi_n)\right\} = : \mathcal{F}(a;Q(\cdot), \nX, \gamma),
\end{align}
RSVB is our framework for data-driven risk-sensitive decision-making. The family of distributions $\sQ$ is popularly known as the  \textit{variational family}.
The choice of the family $\sQ$, disutility/ risk $R$, and parameter $\gamma$ encodes specific problem settings. Our analysis in subsequent Section~\ref{sbsec:PropEta} below reveals general guidelines on how to choose $\sQ$ that ensures a small optimality gap (defined below) with high probability.


 With an appropriate choice of $\sQ$, the optimization on the RHS can yield a good approximation to the log-exponential risk measurement on the left hand side (LHS).
%
%
For brevity, for a given $a \in \sA$ we define the RSVB approximation to the true posterior $\Pi(\theta|\nX)$ as 
\vspace{0em}
\[ \qrsa := \argmax \{ {Q \in \sQ}: \mathcal{F}(a;Q(\cdot), \nX, \gamma) \} \] and the RSVB optimal decision as
\vspace{0em}
 \[\ars := \argmin_{a \in \mathcal A}~\mathcal{F}(a;\qrsa, \nX, \gamma).\]
Observe that $\qrsa$ and $\ars$ are random quantities, conditional on the data $\tilde X_n$. 
    Intuitively, it can be observed that the risk averseness of $\ars$ increases with  increase in $\gamma$. To observe this consider the RSVB formulation and note that $\scKL>0$,  therefore as $\gamma$ increases there is more incentive to deviate from the true posterior and choose $Q \in \sQ$ that maximizes expected risk for a given $a\in \sA$. Consequently as $\gamma$ increases, the RSVB decision rule becomes more risk-averse.

%

Examples of $\sQ$ include the family of Gaussian 
distributions, delta functions, or the family of 
factorized `mean-field' distributions that discard correlations between components of $\theta$. The choice of $\sQ$ is decisive in determining the performance of the algorithm. In general, however the requirements on $\sQ$ are minimal, and part of the analysis in this paper is to articulate sufficient conditions on $\sQ$ that ensure small optimality gap (defined below) for the optimal decision, $\ars$. This establishes the ``statistical goodness'' of the procedure as number of samples increase. In this paper, we analyze the efficacy of the decision rules obtained using the RSVB approximation, by providing high-probability bounds on the optimality gap. We define the {\it optimality gap} for any $\mathtt a \in \sA$ with value $V=R(\mathtt a,\theta_0)$ as,
\begin{definition}[Optimality Gap]
    Let $V_0^* := \min_{a \in \sA} R(a,\theta_0)$  be the optimal value and $a_0^*$ $ :=
     \argmin_{a\in\sA} R(a,\theta_0)$ be the optimal decision for the true model parameter $\theta_0$. Then, the optimality gap in the value is the difference
    \(
    V- V_0^* ,
    \)
    ~and the optimality gap in decision variables is 
    \(
    \left\| a_0^* - \mathtt a\right\|,
    \)
    ~where $\|\cdot\|$ is the appropriate norm on the decision sapce $\sA$.
\end{definition}

A similar performance measure was used in~\cite{kusmierczyk2019variational}, to measure the effectiveness of loss-calibrated VB (LCVB) approach, which can be obtained by setting $\gamma=1$, as a special case of our RSVB formulation. Nonetheless, in Section~\ref{sec:SpecialRSVB}, we discuss two well-known variational Bayesian algorithms (one of them is LCVB) for decision making, which are special cases of RSVB. Moreover, we establish bounds on their respective optimality gaps as a corollary to the bounds derived for
RSVB.

Note that the RSVB algorithm described above is idealized -- clearly the objective $\mathcal{F}(a;$ $Q(\cdot), \nX,\gamma)$ cannot be computed since it requires the calculation of the posterior distribution -- the very object we are approximating! Note, however that optimizing $\mathcal{F}(a;Q(\cdot), \nX,\gamma)$ is equivalent to optimizing $\{\gamma \bbE_Q[ R(a,\theta)] - \bbE_{Q}[\log(dQ(\theta)/ (d\Pi(\theta)p^n_{\theta}(\nX)))]\}$, where $d\Pi(\theta)p^n_{\theta}(\nX))$ is known, and for which the optimizers are the same. Since our focus is on bounding the optimality gap, in the remainder of the paper any reference to the RSVB algorithm is an allusion to the idealized objective $\mathcal{F}(a;Q(\cdot), \nX,\gamma)$. 

In the following section, we lay down important assumptions and definitions used throughout the paper to establish our theoretical results. 

\subsection{Notations and Definitions}

We provide the definitions of important terms used throughout the paper. First, recall the definition of covering numbers: 
\begin{definition}[Covering numbers]\label{def:cover}
    Let $\mathcal{P}:=\{ P_{\theta},\theta \in \Theta \}$  be a parametric family of distributions and $d:\mathcal{P} \times \mathcal{P} \mapsto [0,\infty)$    be a metric. An $\e-$cover of a subset  $\mathcal{P}_{K} :=\{P_{\theta}:  \theta \in K \subset \Theta \}$ of the parametric family of distributions is a set $K' \subset K $ such that, for each $\theta \in K$ there exists a $\theta' \in  K'$ that satisfies $d(P_{\theta},P_{\theta'})\leq \e$. The $\e-$covering number of $\mathcal{P}_{K}$  is
    \(N(\e, \mathcal{P}_{K}, d ) = \min \{ card( K' ): K'\text{ is an $\e-$cover of } K \},\)
    where $card(\cdot)$ represents the cardinality of the set. 
\end{definition}


Next, recall the definition of a test function \citep{Schwartz1965}:
\begin{definition}[Test function]\label{def:test}
    Let $\nX$ be a sequence of random variables on measurable space $(\bigotimes_n \sX,\mathcal{S}^n)$. Then any $\mathcal S^n$-measurable sequence of functions $\{\phi_n\},~ \phi_n: \nX \mapsto [0,1]~\forall n \in \mathbb N$, is a \textit{test of a hypothesis} that a probability measure on $\mathcal S^n$ belongs  to a given set against the hypothesis that  it belongs to an alternative set. The test $\phi_n$ is \textit{consistent} for hypothesis $P_0^n$ against the alternative $P^n \in \{P_{\theta}^n : \theta \in \Theta\backslash\{\theta_0\} \}$ if $\mathbb{E}_{P^n}[\phi_n] \to \Ind{ \{\theta\in \Theta\backslash\{\theta_0\} \}  }(\theta), \forall \theta \in \Theta$ as $n \to \infty$, where $\Ind{\{\cdot\}}$ is an indicator function. 
\end{definition}
A classic example of a test function is $\phi^{\text{KS}}_n = \mathbbm{1}_{\{\text{KS}_n > K_{\nu}\}}(\theta)$ that is constructed using the Kolmogorov-Smirnov statistic $\text{KS}_n := \sup_t |\mathbb{F}_n(t) - \mathbb{F}_{\theta}(t)|$, where $\mathbb{F}_n(t)$ and $\mathbb{F}_{\theta}(t)$ are the empirical and true distribution respectively,  and $K_{\nu}$ is the confidence level. If the null hypothesis is true, the Glivenko-Cantelli theorem~\citep[Theorem 19.1]{van2000asymptotic} shows that the KS statistic converges to zero as the number of samples increases to infinity.

%

Furthermore, we define 
    the Hellinger distance $h(\theta_1,\theta_2)$ between the two probability distributions $P_{\theta_1}$ and $P_{\theta_2}$ is defined as
    \(d_{H}(\theta_1,\theta_2) = \left(\int \left( \sqrt{dP_{\theta_1}} - \sqrt{dP_{\theta_2}}\right)^2\right)^{1/2}. \)
We define 
    the one-sided Hausdorff distance $H(A\|B)$ between sets $A$ and $B$ in a metric space $D$ with distance function $d$ is defined as: \[{H}(A\|B) = \sup_{x \in A} d_{h}(x,B), \text{ where } d_{h}(x,B)= \inf_{y \in B} d(x,y) .\]
Next, we define an arbitrary loss function $L_n : \Theta \times \Theta \mapsto \bbR$ that measures the distance between models $(P^n_{\theta_1},P^n_{\theta_2}) \forall \{\theta_1,\theta_2\} \in \Theta$. At the outset, we assume that $L_n(\theta_1,\theta_2)$ is always positive. 
 We define $\{\e_n\}$ as a sequence such that $\e_n \to 0$ as $n \to \infty$ and $n \e_n^2 \geq 1$.
    

We also define 
\begin{definition}[$\Gamma-$convergence]
    A sequence of functions $F_n: \mathcal{U} \mapsto \bbR $, for each $n \in  \bbN$, $\Gamma-$converges to 
    $F : \mathcal{U} \mapsto \bbR $, 
    if 
    \begin{itemize}
    \item for every $u \in \sU$ and every
    $\{u_n, n\in \bbN \} $ such that $u_n \to u$, 
    \( F (x) \leq  \liminf_{n\to\infty} F_n(u_n);\)
    
    \item for every $u \in  \sU$ , there exists some
    $\{u_n, n\in \bbN \} $ such that $u_n \to u$, 
    \( F (x) \geq \limsup_{n\to\infty} F_n(u_n).\)
    \end{itemize}
    \end{definition}

In addition, we define 
\begin{definition}[Primal feasibility]   
    For any two functions $f:\sU\mapsto \bbR$ and $b:\sU \mapsto \bbR$, a point $u^* \in \sU$ is primal feasible to the following constraint optimization problem 
        \[ \inf_{u\in\sU} f(u) \text{ subject to } b(u) \leq  c,  \] 
        if  $b(u^*) \leq c$, for a given $c\in \bbR$.  
\end{definition}

\subsection{Assumptions}

%


In order to bound the optimality gap, we require some control over how quickly the posterior distribution concentrates at the true parameter $\theta_0$. Our next assumption in terms of a verifiable test condition on the model (sub-)space is one of the conditions required to quantify this rate.

\begin{assumption}[Model indentifiability]\label{assump:Asf1}
    Fix $n\geq 1$. Then, for any $\epsilon > \epsilon_n$ such that $\e_n \to 0$ as $n \to \infty$ and $n \e_n^2 \geq 1$,
    there exists a measurable sequence of test functions $\phi_{n,\e} : \nX \mapsto [0,1]$ and \textit{sieve set} $\Theta_n(\e) \subseteq \Theta$ such that 
    \vspace{0em}
        (i)~ $\bbE_{P^{n}_0}[ \phi_{n,\e} ]  \leq C_0\exp(-C n \e^2 ), \text{ and }$
        (ii)~ $\underset{\{ \theta\in \Theta_n(\e)  : L_n(\theta,\theta_0) \geq C_1 n \epsilon^2  \}}{\sup} \bbE_{P^{n}_{\theta}}[ 1- \phi_{n,\e} ] \leq  \exp(-C n \e^2 )$.
\end{assumption}

Observe that Assumption~\ref{assump:Asf1}$(i)$ quantifies the rate at which a type 1 error diminishes with the sample size, while the condition in~Assumption~\ref{assump:Asf1}$(ii)$ quantifies that of a type 2 error. Notice that both of these are stated through test functions; indeed, what is required are consistent test functions. Opportunely, \citep[Theorem 7.1]{GGV} (stated in Appendix as Lemma~\ref{lem:ggv71} for completeness) roughly implies that an appropriately bounded model subspace $\{ P_{\theta}^n,\theta \in \Theta \}$ (the size of which is measured using covering numbers) guarantees the existence of \textit{consistent} test functions, to test the null hypothesis that the true parameter is $\theta_0$ against an alternate hypothesis -- the alternate being defined using the `distance function' $L_n(\theta_1,\theta_2)$. Subsequently, we will use a specific distance function to obtain finite sample bounds for the optimality gap in decisions and values. In some problem instances, it is also possible to construct consistent test functions directly without recourse to Lemma~\ref{lem:ggv71}. We demonstrate  this in Section~\ref{sec:news} below.

Next, we assume a condition on the prior distribution that ensures that it provides sufficient mass to the set $\Theta_n(\e) \subseteq \Theta$, as defined above in Assumption~\ref{assump:Asf1}. 
\begin{assumption} \label{assump:Asf2}
    Fix $n\geq 1$. Then, for any $\epsilon > \epsilon_n $  such that $\e_n \to 0$ as $n \to \infty$ and $n \e_n^2 \geq 1$,
    the prior distribution satisfies 
    \begin{align*}
        \Pi( \Theta_n^c(\e) ) \leq \exp(-C n \e^2).
    \end{align*} 
\end{assumption}

Notice that Assumption~\ref{assump:Asf2} is trivially satisfied if  $\Theta_n(\e) = \Theta$. The next assumption ensures that the prior distribution places sufficient mass around a neighborhood -- defined using R\'enyi divergence -- of the true parameter $\theta_0$.

\begin{assumption}[Prior thickness]\label{assump:Asf3}
    Fix $n\geq 1$ and a constant $\lambda > 0$. Let   
    $
    A_n := \big\{ \theta \in \Theta :D_{1+\lambda} $ $\left( P_0^n \| P_{\theta}^n \right) \leq C_3 n \e_n^2 \big\}, 
    $
    where $D_{1+\lambda} \left( P_0^n \| P_{\theta}^n \right) := \frac{1}{\lambda} \log \int \left(\frac{dP_0^n}{dP_{\theta}^n}\right)^\lambda dP_0^n $ is the R\'enyi divergence between $P_0^n$ and  $P_{\theta}^n$, assuming $P_0^n$ is absolutely continuous with respect to $P_{\theta}^n$. The prior distribution satisfies 
    \vspace{0em}
    \begin{align*}
        \Pi (A_n)  \geq \exp(-n C_2 \e_n^2). 
    \end{align*} 
\end{assumption}
Notice that the set $A_n$ defines a neighborhood of the distribution corresponding to $\theta_0$ in the model subspace $\{P^n_{\theta}:\theta \in \Theta\}$. The assumption guarantees that the prior distribution covers this neighborhood with positive mass. This is a standard assumption and if it is violated then the posterior too will place no mass in this neighborhood ensuring asymptotic inconsistency.
The above three assumptions are adopted from~\cite{GGV} and has also been used in~\cite{Zhang2018} to prove convergence rates of variational posteriors. Interested readers may refer to~\cite{GGV} and~\cite{Zhang2018} to read more about the above assumptions.

It is apparent by the first term in (RSVB) that in addition to Assumption~\ref{assump:Asf1},~\ref{assump:Asf2}, and~\ref{assump:Asf3}, we also require regularity conditions on the risk function $R(a,\cdot)$. 
Thus, the next assumption restricts the prior distribution with respect to $R(a,\theta)$.
\begin{assumption}\label{assump:Asf14}
    Fix $n \geq 1$ and \textcolor{black}{$\gamma>0$}. For any $\epsilon > \epsilon_n $, $a\in  \sA$, 
    \begin{align*}	
       \bbE_{\Pi}[ \mathbbm{1}_{\left\{{\gamma R(a,\theta )} > {C_4(\textcolor{black}{\gamma})n \e^2} \right\} } e^{\gamma R(a,\theta )} ] \leq \exp(-C_5(\textcolor{black}{\gamma}) n\e^2),
    \end{align*}
    where $C_4(\textcolor{black}{\gamma})$ and  $C_5(\textcolor{black}{\gamma})$ are scalar positive functions of $\gamma$.
\end{assumption}
Note that the set $\left\{{\gamma R(a,\theta )} > {C_4(\textcolor{black}{\gamma}) n \e^2} \right\}$ represents the subset of the model space where the risk $R(a,\theta)$ (for a fixed decision $a$) is large, and the prior is assumed to place sufficiently small mass over such sets. Moreover, using  Cauchy-Schwarz inequality observe that
\begin{align*}	
    \bbE_{\Pi}[ \mathbbm{1}_{\left\{{\gamma R(a,\theta )} > {C_4(\textcolor{black}{\gamma})n \e^2} \right\} } e^{\gamma R(a,\theta )} ]
    &\leq  \left( \bbE_{\Pi}[ \mathbbm{1}_{\left\{{\gamma R(a,\theta )} > {C_4(\textcolor{black}{\gamma})n \e^2} \right\} }  ] \right)^{1/2} \left( \bbE_{\Pi}[  e^{2\gamma R(a,\theta )}] \right)^{1/2} 
    \\
    &\leq e^{-{C_4(\textcolor{black}{\gamma}) n \e^2}} \bbE_{\Pi}[  e^{2\gamma R(a,\theta )}],
    \end{align*}
which implies that if  the risk function is bounded in $(a,\theta)$, then above condition can be trivially satisfied.
Finally, we also require the following condition lower bounding the risk function $R$.
\begin{assumption}\label{assump:Asf15}
    ${ R(a,\theta)}$ is assumed to  satisfy
    \begin{align*}
        W := \inf_{\theta \in\Theta } \inf_{a \in \mathcal A}  e^{  R(a,\theta)}   > 0.
    \end{align*}
\end{assumption}
Note that any risk function which is bounded from below in both the arguments satisfies this condition.  Furthermore, following \cite{Pflugg2003} we define a {\it growth  condition} on the `true' risk function $R(a,\theta_0)$.
\begin{assumption}[Growth condition]~\label{assump:Growth}
    Let $\Psi(d) : [0,\infty) \mapsto [0,\infty)  $ be a {\it growth function} if it is strictly increasing as $d \to \infty$ and $\lim_{d \to 0} \Psi(d) = 0$. Then for any $A\subset \sA$, $R(a,\theta_0)$ satisfies a growth condition with  respect to $\Psi(\cdot)$, if 
    \begin{align}
        R(a,\theta_0) ) \geq \inf_{z \in \mathcal{A}} R(z,\theta_0)  + \Psi \left(H\left( A, \underset{z \in \mathcal{A} }{\arg \min}~ R(z,\theta_0)  \right)\right).
    \end{align}
\end{assumption}
The growth condition above is a generalization of strong-convexity. Indeed, if the true risk is strongly convex, then this condition is automatically satisfied. 

In the next, section we derive high-probability bounds on the optimality gap in values and decisions, by proving a series of results.

\section{Asymptotic Analysis of the Optimality Gaps}~\label{sec:FSB}
In this section, we establish high-probability bounds on the optimality gap in  values and decision rules computed using RSVB approach for sufficiently large $n$. 
Our results in here identify the regularity conditions on the data generating model $\{ P_{\theta}^n,\theta \in \Theta \}$, the prior distribution $\Pi(\theta)$, the variational family $\sQ$, the risk function $R(a,\theta)$ 
 to compute the bounds. 

We can now state our first result, establishing an upper bound on the expected deviation from the true model $P_{0}^n$, measured using distance function $L_n(\cdot,\theta_0)$, under the RSVB approximate posterior. We also note that the following result generalizes Theorem 2.1 of \cite{Zhang2018}, which is exclusively for the case when $\gamma \to 0^{+}$. However, the proof  techniques are motivated from the proof of Theorem 2.1 in~\cite{Zhang2018}.
\begin{theorem} \label{thrm:thm1}
	Fix $a' \in \sA$ and $\gamma > 0$. For any $L_n(\theta,\theta_0)\geq0$, under
     Assumptions~\ref{assump:Asf1},~\ref{assump:Asf2},~\ref{assump:Asf3}, ~\ref{assump:Asf14}, and~\ref{assump:Asf15}, and for $\min(C,C_4(\textcolor{black}{\gamma}) + C_5(\textcolor{black}{\gamma})) > C_2 + C_3 + C_4(\gamma) + 2$ and $$ \eta_n^R(\gamma) := \frac{1}{n}\inf_{Q \in \sQ} \bbE_{P^{n}_0} \left[  \scKL(Q(\theta)\|\Pi(\theta|\nX))  
	- \gamma \inf_{a \in  \mathcal{A}} \bbE_Q[R(a,\theta)] \right], $$
	the RSVB approximator of the true posterior $\qrs$ satisfies,
	\begin{align}
		\bbE_{P^{n}_0} \left[ \int_{\Theta} L_n(\theta,\theta_0) d\qrs \right] \leq  n\left( \textcolor{black}{M(\gamma)} \e_n^2 + M \eta_n^R(\gamma) \right), \label{eq:eq_lcb}
	\end{align}
	for a positive mapping \rr{$M(\gamma)= 2\left( C_{1}+ M C_4(\gamma)  \right)$}~, where $M= \frac{2C_1}{\min(C,\lambda,1)}$
    , for sufficiently large $n$.
	
\end{theorem}

First recall that $\e_n$ is the convergence rate of the true posterior~\cite[Theorem 7.3]{GGV}. Notice that the additional term  $\eta_n^R(\gamma)$ emerges from the posterior approximation and depends on the choice of the variational family $\sQ$, risk function $R(\cdot,\cdot)$, and the parameter $\gamma$. The appearance of this term in  the bound also signifies that, to minimize expected gap between true model and any other model, defined using $n^{-1}L_n(\theta,\theta_0)$, under the RSVB posterior, the average (with  respect to $P_0^n$) RSVB objective has to be maximized.   
 Later in this section, we specify the conditions on the family of distributions $\{ P_{\theta}^n,\theta \in \Theta \}$, the prior and the variational family $\sQ$ that ensure $\eta_n^R(\gamma)\to 0$ as $n \to \infty$. Moreover, we also identify  mild regularity conditions on $\sQ$ to show that $\eta_n^R(\gamma)$ is $O(\e_n^2)$. Furthermore, we show that as $\gamma$ increases $\eta_n^R(\gamma)$ decreases. 
We discuss this result and the bound therein later in the next subsection. Before that, we establish our main result (the bounds on the optimality gap) using the theorem above.

Since the result in Theorem~\ref{thrm:thm1} holds for any positive distance function, we now fix
\begin{align}
     L_n(\theta,\theta_0) = n \left(\sup_{a \in \mathcal{A}} \vert R(a,\theta) - R(a,\theta_0) \vert \right)^2. \label{eq:lossr}
\end{align}
Notice that for a given $\theta$, $n^{-1/2} \sqrt{L_n(\theta,\theta_0)}$ is the uniform distance between the $R(a,\theta)$ and $R(a,\theta_0)$. Intuitively, Theorem~\ref{thrm:thm1} implies that the expected uniform difference $\frac{1}{n}L_n(\theta,\theta_0)$ with respect to the RSVB approximate posterior is $O( \textcolor{black}{M(\gamma)} \e_n^2 + M \eta_n^R(\gamma))$, and if $\textcolor{black}{M(\gamma)} \e_n^2 + M \eta_n^R(\gamma) \to 0$ as $n \to \infty$ then it converges to zero at that rate. 

Also, note that in order to use~\eqref{eq:lossr} we must demonstrate that it satisfies Assumption~\ref{assump:Asf1}. This can be achieved by constructing bespoke test functions for a given $R(a,\theta)$. We demonstrate this approach by an example in~Section~\ref{sec:Mnews}. Nonetheless, we also provide sufficient conditions for the existence of the test functions in the appendix. These conditions are typically easy to verify when the loss function $R(\cdot,\cdot)$ are bounded, for instance.

Now, we first bound the optimality gap between  $R(\ars,\theta_0)$ and $V_0^*$.
\begin{theorem}\label{thrm:OGValue}
    Fix $\gamma>0$. Suppose that the set $\sA$ is compact. 
    Then, under Assumptions~\ref{assump:Asf1}, ~\ref{assump:Asf2},~\ref{assump:Asf3},~\ref{assump:Asf14}, and~\ref{assump:Asf15}, for $\min(C,C_4(\gamma) + C_5(\gamma)) > C_2 + C_3 + C_4(\gamma) + 2$ and for any $\tau>0$, the $P_0^n-$ probability of the following event 
    \begin{align}
        \bigg\{ \nX:   R( \ars,\theta_0) - \inf_{z \in \mathcal{A}} R(z,\theta_0)  \leq 2\tau  \left[ \textcolor{black}{M(\gamma)} \e_n^2 + M \eta_n^R(\gamma) \right]^{\frac{1}{2}}  \bigg\}
    \end{align}
    is at least $1-\tau^{-1}$, for a positive mapping \textcolor{black}{$M(\gamma)= 2\left( C_{1}+ M C_4(\gamma)  \right)$}~, where $M= \frac{2C_1}{\min(C,\lambda,1)}$
    for sufficiently large $n$.
\end{theorem}

Next, we bound the  optimality gap between the approximate optimal decision rule $\ars$ and the true optimal decision. The bound, in particular, depends on the curvature of $R(a,\theta_0)$ around the true optimal decision, defined using the growth condition  in Assumption~\ref{assump:Growth}. 

\begin{theorem}\label{thrm:OGdecision}
    \noindent Fix $\gamma>0$. Suppose that the set $\sA$ is compact and $R(a,\theta_0)$ satisfies the growth condition in~Assumption~\ref{assump:Growth},  with $\Psi(d)$  such that $  \Psi(d)/d^{\delta} = \kappa$, for any  $\delta >0$. Then, under Assumptions~\ref{assump:Asf1}, ~\ref{assump:Asf2},~\ref{assump:Asf3},~\ref{assump:Asf14}, and~\ref{assump:Asf15}, for $\min(C,C_4(\gamma) + C_5(\gamma)) > C_2 + C_3 + C_4(\gamma) + 2$ and for any $\tau>0$, the $P_0^n-$ probability of the following event 
    \[ \left\{ \tilde X_n :  H \left(\ars(\tilde X_n), \underset{z \in \mathcal{A} }{\arg \min}   ~ R(z,\theta_0) \right)    \leq  \left[\frac{2 \tau \left[ \textcolor{black}{M(\gamma)} \e_n^2 + M \eta_n^R(\gamma) \right]^{\frac{1}{2}}}{\kappa} \right]^\frac{1}{\delta} \right\}  \]
    is at least $1-\tau^{-1}$, for a positive mapping \textcolor{black}{$M(\gamma)= 2\left( C_{1}+ M C_4(\gamma)  \right)$}~, where $M= \frac{2C_1}{\min(C,\lambda,1)}$
    for sufficiently large $n$.
\end{theorem}

To fix the intuition, suppose $\delta=2$ and $\Psi(d)= \frac{\kappa}{2}d^2$, then $\kappa$ represents the Hessian of the true risk, $R(a,\theta_0)$, near its optimizer. It is easy to see from the above result the rate of convergence of $\ars$ is scaled by a factor $\kappa^{-1}$. That is, higher the curvature near the optimizer, the faster $\ars$ converges. 

 Evidently, the bounds obtained in all three results that we have proved so far depends on $\eta_n^R(\gamma)$. Consequently, in the next section, with an aim to understand the properties of the bounds in Theorem~\ref{thrm:thm1},~\ref{thrm:OGValue}, and~\ref{thrm:OGdecision}, we prove some of the important properties of $\eta_n^R(\gamma)$ with respect to $n$ and $\gamma$ under some additional regularity conditions.

\subsection{Properties of $\eta_n^R(\gamma)$}~\label{sbsec:PropEta}
In order to characterize $\eta_n^R(\gamma)$,  we specify conditions on variational family $\sQ$ such that  
$\eta_n^R(\gamma)=O(\e_n'^2)$, for some $\e_n' \geq \frac{1}{\sqrt n}$ and $\e_n'\to 0$. We impose following condition on the variational family $\sQ$ that lets us obtain a bound on $\eta_n^R(\gamma)$ in terms of $n$ and $\gamma$.
\begin{assumption}\label{assump:Asf11}
    There exists a sequence of distribution $\{q_n(\cdot)\}$ in the variational family $\sQ$ such that for a positive constant $C_9$,
    \begin{align}\frac{1}{n}  \left[ \scKL\left(Q_n(\theta)\|\Pi(\theta) \right) + \bbE_{Q_n(\theta)} \left[ \scKL\left(dP^n_{0}(\nX)\| dP^n_{\theta}(\nX) \right) \right]  \right] \leq C_9 \e_n'^2. 
        \label{eq:eq1c}
    \end{align}
\end{assumption} 
If the observations in $\nX$ are i.i.d, then observe that \[ \frac{1}{n}    \bbE_{Q_n(\theta)} \left[ \scKL\left(dP^n_{0}(\nX))\| dP^n_{\theta}(\nX) \right) \right]  = \bbE_{Q_n(\theta)} \left[ \scKL\left(dP_0)\| dP_{\theta}(\xi) \right) \right].  \]
 Intuitively, this assumption implies that the variational family must contain a sequence of distributions that converges weakly to a Dirac delta distribution concentrated at the true parameter $\theta_0$ otherwise the second term in the LHS of~\eqref{eq:eq1c} will be non-zero. Also note that the above assumption does not imply that the minimizing sequence $\qrs$ (automatically) converges weakly to a dirac-delta distribution at the true parameter $\theta_0$. Furthermore, unlike Theorem 2.3 of \cite{Zhang2018}, our condition on $\sQ$ in Assumption~\ref{assump:Asf11}, to obtain a bound on $\eta_n^R(\ \gamma)$, does not require the support of the distributions in $\sQ$ to shrink to the true parameter $\theta_0$ at some appropriate rate, as the numbers of samples increases.  
%
\begin{proposition}\label{prop:eta_n}
    Under Assumption~\ref{assump:Asf11} and for a constant $C_8= -  \inf_{Q \in \sQ}$ $ \inf_{a \in \sA}   \bbE_Q[R(a,\theta)]  $ and $C_9>0$,
    \vspace{0em}
    \[\eta_n^R(\gamma) \leq \gamma  n^{-1}C_8 +C_9\e_n'^2. \]
\end{proposition}
In Section~\ref{sec:App}, we present an example where the likelihood is exponentially distributed, the prior is inverse-gamma (non-conjugate), and the variational family is the class of gamma distributions, where we construct a sequence of distributions in the variational family that satisfies Assumption~\ref{assump:Asf11}. We also provide another example where the  likelihood  is  multivariate  Gaussian with  unknown mean and variational family is uncorrelated Gaussian restricted to compact subset of $\bbR^d$ with an uniform prior on the same compact set satisfy Assumption~\ref{assump:Asf11}. 

By definition $\e_n^2 \to 0$ and $\e'_n\to 0$ as $n \to \infty$, and therefore it follows from Proposition~\ref{prop:eta_n} that $\textcolor{black}{M(\gamma)} \e_n^2 + M \eta_n^R(\gamma) \to 0$.
However, the bound obtained in the last proposition might be loose with respect to $\gamma$, when $C_8<0$. To see this, we prove the following result.
\begin{proposition}~\label{prop:eta_gamma}
    If the solution to the optimization problem in $\eta_n^R(\gamma)$ is primal feasible then  $\eta_n^R(\gamma)$ decreases as $\gamma$ increases.

\end{proposition}

\section{Special Cases of RSVB}~\label{sec:SpecialRSVB}
Recall from the RSVB formulation that $\gamma$ encodes the risk sensitivity of the decision maker. In this section, we show that RSVB generalizes two well-known variational Bayesian approaches for decision making, \textit{`naive' VB} (NVB) and \textit{loss-calibrated VB}(LCVB). In particular, the RSVB method is equivalent to NVB when $\gamma\to 0^{+}$ and LCVB for  $\gamma=1$. In what follows, we discuss NVB  and  LCVB briefly and demonstrate our theoretical results to these settings.  


\subsection{Naive VB}

The naive VB (NVB) method, summarized below in Algorithm~\ref{algo:NVB}, is a \textit{``separated estimation and
    optimization''}  method wherein we use the VB approximation to the posterior distribution as a plug-in estimator for computing the posterior predictive loss, and then optimize the resulting approximate posterior predictive loss. 
\begin{algorithm}[ht]
    \SetKwInOut{Input}{Input}\SetKwInOut{Output}{Output}
    \Input{$R(\cdot,\cdot),~\nX,\mathcal Q$ }
    \Output{$\anv$}
    \qquad Step 1. Compute approximate posterior: $\qnv := \arg\min_{Q \in  \sQ}\text{KL}(Q(\cdot) \| \Pi(\cdot|\nX))$\;
    \qquad Step 2. Compute: $\anv := \arg\min_{a \in \sA} \bbE_{\qnv}[R(a,\theta)]$.
    \caption{Naive VB}~\label{algo:NVB}
\end{algorithm}

The NVB method completely isolates the statistical estimation problem from the decision-making problem. Observe that as $\gamma\to 0^+$, $\qrs$  and $\ars $ converges to $\qnv$ and $\anv$ respectively; that is 
\vspace{0em}
\begin{align*}
	\lim_{\gamma\to 0^+} \qrsa &=  \lim_{\gamma\to 0^+} \argmax_{Q \in \sQ} \left\{ \mathbb
            \bbE_Q[ R(a,\theta)] -  \frac{1}{\gamma}
            \text{KL}(Q||\Pi_n)\right\} \\
            & =  \argmin_{\tilde Q \in \sQ} \scKL(\tilde Q(\theta) \| \Pi(\theta|\nX)):=\qnv. 
\end{align*}
To see this, recall the RSVB formulation and multiply by $\gamma>0$ on either side to obtain:
\begin{align}
    \label{eq:eqRSVB}
    \log \mathbb \bbE_{\Pi_n}\left[\exp(\gamma
    R(a,\theta))\right] 
    &\geq \max_{Q \in \sQ} \left\{ \gamma \mathbb
    \bbE_Q[ R(a,\theta)] -  
    \text{KL}(Q||\Pi_n)\right\} 
    \\
    \nonumber
    &= - \min_{Q \in \sQ} \left\{ 
    \text{KL}(Q||\Pi_n)- \gamma \mathbb
    \bbE_Q[ R(a,\theta)]\right\} .
\end{align} 

Note that, since $\text{KL}(Q||\Pi_n)- \gamma \mathbb
\bbE_{Q}[ R(a,\theta)]$ converges uniformly in $\gamma$ to $\text{KL}(Q||\Pi_n)$ as $\gamma\to 0^{+}$, therefore former $\Gamma-$converges to the latter and hence their respective minimizers and minimum values~\citep{Braides2002}. 
In particular, to prove the uniform convergence, let $\{r_k\}$ be a  sequence of rational numbers on $\bbR^{+}$, such that  $\{r_k\},k\in \bbN$ is dense in $\bbR^{+}$ and $r_k\to 0^{+}$ as $k\to \infty$. Now observe that for every $\e>0$ and given $a\in \sA$ and $\theta\in \Theta$, there exists a $K\in \bbN$, such that  for all $k\geq K$ and $Q \in  \sQ$, $| r_k \bbE_{Q}[R(a,\theta)]|<\e$, hence  uniform convergence follows.  

Now taking limit $\gamma\to 0^+$, the equation~\eqref{eq:eqRSVB} reduces to the well known \textit{evidence lower bound}~\citep{Bl2017} , that is
\begin{align*}
    0
    \geq \max_{Q \in \sQ} \left\{ -  
    \text{KL}(Q||\Pi_n)\right\} & \equiv \log \int_{\Theta}dP^n_{\theta}(\nX)\Pi(\theta)d\theta 
    \\
    &\geq \max_{Q \in  \sQ} \left\{ - \scKL(Q(\theta)\|\Pi(\theta))  + \bbE_{Q}[dP^n_{\theta}(\nX)] \right\},
\end{align*} 
where $\Pi(\theta)$ is the  prior density. Therefore, it follows that for any $\gamma>0$
\begin{align*}
    \argmin_{a\in A} \left\{ \mathbb
     \bbE_{\qnv}[ R(a,\theta)] -  \frac{1}{\gamma}
    \scKL(\qnv \| \Pi(\theta|\nX))\right\} =\anv.
    \end{align*}
Since $\lim_{\gamma\to0^+}\gamma R(\cdot,\cdot)=0$, we do not require  Assumption~\ref{assump:Asf14} and~\ref{assump:Asf15} to obtain an analogous result to Theorem~\ref{thrm:thm1a} for NVB method. Therefore, the condition on the constants in Theorem~\ref{thrm:thm1a} ( $\min(C,C_4(\gamma) + C_5(\gamma)) > C_2 + C_3 + C_4(\gamma) + 2$ ) is simplified to $C > C_2 + C_3 + 2$ by choosing $C_4(\gamma)$ as a small and $C_5(\gamma)$ as a large number. 

\begin{theorem}\label{thrm:NV}
	Let $\e_n$ be a sequence such that $\e_n \to 0$ and $n \e_n^2 \to \infty$ as $n \to \infty$ and $$L_n(\theta,\theta_0) = n \left(\sup_{a \in \mathcal{A}} \vert  R(a,\theta) -  R(a,\theta_0)  \vert \right)^2. $$ 
     Then under Assumptions~\ref{assump:Asf1},~\ref{assump:Asf2},~\ref{assump:Asf3},~\ref{assump:Asf14}, and~\ref{assump:Asf15}, and for $C > C_2 + C_3 + 2$
    the NVB approximation of the true posterior satisfies,
	\vspace{0em}
	\begin{align}
	\bbE_{P^{n}_0} \left[ \int_{\Theta} L_n(\theta,\theta_0) \qnv d\theta \right] \leq \bar M n(\e_n^2 + \eta_n(0)),
	\label{eq:prop1}
	\end{align}
	where positive constant $\bar M$ depends only on $C,C_0 ,C_1,$ and $\lambda$, and
	$$ \eta_n(0) := \eta^{R}_n(0)= \frac{1}{n} \inf_{Q \in  \sQ} \bbE_{P^{n}_0} [\scKL(Q(\theta)\|\Pi(\theta|\nX)) ].  	$$	
\end{theorem}

The next result establishes a bound on the optimality gap of the naive VB estimated optimal value $R( \anv,\theta_0)$ from the true optimal value $V_0=\inf_{z \in \mathcal{A}} R(z,\theta_0) $.
\begin{theorem}~\label{thrm:NVOGvalue}
    Suppose that the set $\sA$ is compact and Assumptions~\ref{assump:Asf1},~\ref{assump:Asf2},~\ref{assump:Asf3},~\ref{assump:Asf14}, and~\ref{assump:Asf15} are satisfied with $C > C_2 + C_3 + 2$. 
    Then for any $\tau>0$, the $P_0^n-$ probability of the following event 
    \begin{align}
        \bigg\{ \nX:   R( \anv,\theta_0) - \inf_{z \in \mathcal{A}} R(z,\theta_0)  \leq 2\tau  \left[ \bar M (\e_n^2 + \eta_n(0)) \right]^{\frac{1}{2}}  \bigg\}
    \end{align}
    is at least $1-\tau^{-1}$, where $\bar M$ is a positive constant.
\end{theorem}
Next, we bound the  optimality gap between the approximate optimal decision rule $\anv$ and the true optimal decision. The bound, in particular, depends on the curvature of $R(a,\theta_0)$ around the true optimal decision. 
  The growth function is denoted as $\Psi(\cdot) $. The following theorem is a special case of the general result for $\ars$ in Theorem~\ref{thrm:OGdecision}.
\begin{theorem}\label{thrm:NVOGdecision}
	\noindent Suppose that the set $\sA$ is compact and $R(a,\theta_0)$ satisfies the growth condition,  with $\Psi^1(d)$  such that $  \Psi(d)/d^{\delta} = \kappa$, for a $\delta >0$. Moreover, Assumptions~\ref{assump:Asf1},~\ref{assump:Asf2},~\ref{assump:Asf3},~\ref{assump:Asf14}, and~\ref{assump:Asf15} are satisfied with $C > C_2 + C_3 + 2$. Then for any $\tau>0$ , the $P_0^n-$ probability of the following event 
	\[ \left\{ \tilde X_n :  H \left(\anv(\tilde X_n), \underset{z \in \mathcal{A} }{\arg \min}   ~ R(z,\theta_0) \right)    \leq  \left[\frac{2 \tau \left[ M (\e_n^2 + \eta_n(0)) \right]^{\frac{1}{2}}}{\kappa} \right]^\frac{1}{\delta} \right\}  \]
	is at least $1-\tau^{-1}$, where $M$ is the positive constant as defined in Theorem~\ref{thrm:NV}.

\end{theorem}

\subsection{Loss Calibrated VB}

Algorithm~2 summarizes the {\it Loss-calibrated VB} (LCVB) method \citep{LaSiGh2011}. 
\begin{algorithm}[ht]\label{algo:2}
    \SetKwInOut{Input}{Input}\SetKwInOut{Output}{Output}
    \Input{$R(\cdot,\cdot),~\bar X_n,\mathcal Q$ }
    \Output{$\alc$}
    \qquad Step 1. Compute: $\alc := \arg\min_{a \in \sA} \max_{Q \in  \sQ} \left\{ -\text{KL}(Q(\cdot) \| \Pi(\cdot|\bar X_n)) + \bbE_{Q}[R(a,\theta)]\right\}.$
    \caption{Loss-calibrated VB}
\end{algorithm}
Observe that this method combines the posterior approximation and decision-making problems into one minimax optimization problem. The objective here can be directly contrasted with that in Algorithm~1. Note that the inner maximization will result in an approximate (loss calibrated) posterior distribution at each decision point $a \in \sA$.  Moreover, also note that LCVB is same as RSVB for $\gamma=1$.


In this section, we compute a bound on the optimality gaps loss-calibrated optimal decision $\alc$ and optimal value. 

\begin{theorem}~\label{thrm:LC}
	Fix $a_0\in \sA$ and let $\e_n$ be a sequence such that $\e_n \to 0$ and $n \e_n^2 \to \infty$ as $n \to \infty$ and 
    $$L_n(\theta,\theta_0) = n \left(\sup_{a \in \mathcal{A}} \vert R(a,\theta) - R(a,\theta_0) \vert \right)^2. $$
     Then under Assumptions~\ref{assump:Asf1},~\ref{assump:Asf2},~\ref{assump:Asf3},~\ref{assump:Asf14}, and~\ref{assump:Asf15}, 
      for some positive constants $C,C_2,C_3,C_4(1), $ and $C_5(1)$ such that $\min(C,(C_4(1)+C_5(1))) > C_2 + C_3 + C_4(1)+ 2$, and for
	$$ \eta_n^{R}(1) := \frac{1}{n} \inf_{Q \in \sQ} \bbE_{P^{n}_0} \left[  \scKL(Q(\theta)\|\Pi(\theta|\nX))  
	- \inf_{a \in \sA} \bbE_Q[R(a,\theta)] \right], $$
	 the Loss calibrated VB approximation of the true posterior satisfies,
	\begin{align}
	\bbE_{P^{n}_0} \left[ \int_{\Theta} L_n(\theta,\theta_0) q_{a_0}^*(\theta| \nX) d\theta \right] \leq  n(M(1)\e_n^2 + M \eta_n^{R}(1)),
	\label{eq:prop3}
	\end{align}
	where \textcolor{black}{$M(1)= 2\left( C_{1}+ M C_4(1)  \right)$}~, and $M= \frac{2C_1}{\min(C,\lambda,1)}$.
\end{theorem}

Note that, the second term (inside the expectation) in the definition of $\eta_n^{R}(1)$ could result in either $\eta_n(0) > \eta_n^{R}(1)$ or vice versa and therefore could play an important role in comparing the LCVB and naive VB approximations to the true optimal decision. 

The next result establishes a bound on the optimality gap of the  LCVB estimated optimal value $R( \alc,\theta_0)$ from the true optimal value $V_0=\inf_{z \in \mathcal{A}} R(z,\theta_0) $.
\begin{theorem}\label{thrm:LCOGvalue}
    Suppose that the set $\sA$ is compact and Assumptions~\ref{assump:Asf1},~\ref{assump:Asf2},~\ref{assump:Asf3}, ~\ref{assump:Asf14}, and~\ref{assump:Asf15} are satisfied with $\min(C,(C_4(1)+C_5(1))) > C_2 + C_3 + C_4(1)+ 2$  for some positive constants $C,C_2,C_3,C_4(1), $ and $C_5(1)$. 
    Then, for any $\tau>0$, the $P_0^n-$ probability of the following event 
    \begin{align}
        \bigg\{ \nX:   R( \alc,\theta_0) - \inf_{z \in \mathcal{A}} R(z,\theta_0)  \leq 2\tau  \left[  (M(1)\e_n^2 + M \eta_n^{R}(1)) \right]^{\frac{1}{2}}  \bigg\}
    \end{align}
    is at least $1-\tau^{-1}$, where \textcolor{black}{$M(1)= 2\left( C_{1}+ M C_4(1)  \right)$}~, and $M= \frac{2C_1}{\min(C,\lambda,1)}$.
    \end{theorem}
Next, we bound the optimality gap between the approximate LC optimal decision rule $\alc$ and the  true optimal decision. 

\begin{theorem}\label{thrm:LCOGdecision}
    Suppose that the set $\sA$ is compact and  $R(a,\theta)$ has a growth function $\Psi(d)$  such that $ \Psi(d)/d^{\delta} = \kappa$ for a $\delta>0$. Moreover, Assumptions~\ref{assump:Asf1},~\ref{assump:Asf2}, ~\ref{assump:Asf3}, ~\ref{assump:Asf14}, and~\ref{assump:Asf15} are satisfied with $\min(C,(C_4(1)+C_5(1))) > C_2 + C_3 + C_4(1)+ 2$  for some positive constants $C,C_2,C_3,C_4(1), $ and $C_5(1)$. Then, for any $\tau>0$, the $P_0^n-$ probability of the following event 
	\[ \left\{ H(\alc, \underset{z \in \mathcal{A} }{\arg \min}   ~ R(z,\theta_0) )    \leq  \left[\frac{2 \tau \left[  (M(1)\e_n^2 + M \eta_n^{R}(1)) \right]^{\frac{1}{2}}}{\kappa} \right]^\frac{1}{\delta} \right\}  \]
	is at least $1-\tau^{-1}$, where \textcolor{black}{$M(1)= 2\left( C_{1}+ M C_4(1)  \right)$}~, and $M= \frac{2C_1}{\min(C,\lambda,1)}$.
%
\end{theorem}
	

\section{Applications}~\label{sec:App}
We illustrate our theoretical findings with the help of three examples:  the  single and multi-product \textit{newsvendor model} and Gaussian process classification. In the examples, we study the interplay between sample size $n$ and the  risk parameter $\gamma$, and their effect on  the optimality gap in decisions and values.

\subsection{Single-product Newsvendor Model}\label{sec:news}

In this section, we study a canonical data-driven decision-making problem with a `well-behaved' risk function $R(a,\theta)$, the data-driven newsvendor model. This problem has received extensive study in the literature, and remains a cornerstone of inventory management~\citep{Sc1960,bertsimas2005data,levi2015data}. Recall that the newsvendor loss function is defined as $$\ell( a,\xi) := h(  a-\xi)^+ + b(\xi-  a)^+$$
where $h$  (underage cost) and $b$ (overage cost) are given positive constants, $\xi  \in  [0,\infty)$ the random demand, and 
$a$ the inventory or decision variable, typically assumed to take values in a compact decision space $\mathcal{A}$ with $\underline{a} := \min \{a: a\in \mathcal A\}$ and $\bar{a} := \max \{a: a\in \mathcal A\}$, and $\underline{a} >0$. The distribution over the random demand, $P_{\theta}$ is assumed to be exponential with 
unknown rate parameter $\theta \in (0,\infty)$. 
The model risk can easily be derived as 
\begin{align}\label{eq:news-loss}
R(a,\theta) := \mathbb{E}_{P_{\theta}}[\ell(a,\xi)]= ha- \frac{h}{\theta} + (b+h) \frac{e^{-a \theta}}{\theta},
\end{align}
 which is convex in $a$. We assume that $\nX := \{\xi_1, \xi_2 \ldots \xi_n\}$ be $n$ observations of the random demand, assumed to be i.i.d random samples drawn from $P_{0}$. 
%

We fix the model space $\Theta=[T,\infty)$ for some $T>0$ and assume that $\theta_0$ lies in the interior of $\Theta$.
We now assume a non-conjugate truncated inverse-gamma ($\text{Inv}-\Gamma$) prior distribution restricted to $\Theta$, with shape and rate parameter $\alpha$ and $\beta$ respectively, that is for a set $A\subseteq \Theta$, we define $\Pi(A) = \text{Inv}-\Gamma_{\Theta}(A;\alpha,\beta)= \text{Inv}-\Gamma(A\cap \Theta;\alpha,\beta)/\text{Inv}-\Gamma(\Theta;\alpha,\beta) $  . We now verify Assumptions~\ref{assump:Asf2},~\ref{assump:Asf1},~\ref{assump:Asf3},~\ref{assump:Asf15} and ~\ref{assump:Asf14} (in that order) in this newsvendor setting. The proofs of the lemmas are delayed to the electronic companion for readability.

First, we fix the sieve set $\Theta_n(\e) = \Theta$, which clearly implies that the restricted inverse-gamma prior $\Pi(\theta)$, places no mass on the complement of this set and therefore satisfies Assumption~\ref{assump:Asf2}.

Second, under the condition that the true demand distribution is exponential with parameter $\theta_0$ (and $P_0 \equiv P_{\theta_0}$), we demonstrate the existence of test functions satisfying Assumption~\ref{assump:Asf1}. 
\begin{lemma}\label{lem:nv2}
	Fix $n\geq 5$. Then, for any $\e>\epsilon_n := \frac{1}{\sqrt{n}}$ with $\e_n \to 0$, and $n\e_n^2 \geq 1$, there exists a test function $\phi_n$~(depending  on $\e$) such that $L_n^{NV}(\theta,\theta_0) = n \left(\sup_{a \in \mathcal{A}} \vert R(a,\theta) - R(a,\theta_0)  \vert \right)^2 $	satisfies
    \vspace{0em}
    \begin{align}
        \bbE_{P^{n}_0}[ \phi_n ]  & \leq C_0 \exp(-C n \e^2 ), \label{eq:eq12nv} \\
        \underset{\{ \theta\in \Theta : L_n^{NV}(\theta,\theta_0) \geq C_1n \e^2  \}}{\sup} \bbE_{P^{n}_{\theta}}[ 1- \phi_n ]  & \leq \exp(-C n \e^2 ),\label{eq:eq13nv}
    \end{align} 
    where $C_0=20$ and $C = \frac{C_1}{2} (K_1^{NV})^{-2}$ for a constant $C_1>0$ and $K_1^{NV} = $ $\frac{\left[\left( \frac{h}{\theta_0}- \frac{h}{T} \right)^2  + (b+h)^2 \left( \frac{e^{-\underline{a} T}}{T}  -  \frac{e^{- \underline{a} \theta_0}}{\theta_0}\right)^2\right]^{1/2}}{d_{H}(T,\theta_0)} $.
\end{lemma}

The proof of the above result follows by showing that $d_L^{NV}=n^{-1/2}\sqrt{L^{NV}_n(\theta,\theta_0)} $ can be bounded above by the Hellinger distance between two exponential distributions on $\Theta$ (under which a test function  exists)  in Lemma~\ref{lem:HellNV} in the appendix.

Third, we show that there exist appropriate constants such that the inverse-gamma prior satisfies Assumption~\ref{assump:Asf3} when the demand distribution is exponential.
\begin{lemma}\label{lem:nv3}
	Fix $ n_2\geq 2$ and any $\lambda >1$. Let   		\(A_n := \left\{ \theta \in \Theta :D_{1+\lambda} \left( P_0^n \| P_{\theta}^n \right) \leq C_3 n \e_n^2 \right\}\), 	where $D_{1+\lambda} \left( P_0^n \| P_{\theta}^n \right)$ is the R\'enyi divergence between $P_0^n$ and  $P_{\theta}^n$.
    Then for $\e_n^2=\frac{\log n}{n}$ and any $C_3>0$ such $C_2=\alpha C_3 \geq 2$, the truncated inverse-gamma prior $\text{Inv}-\Gamma_{\Theta}(A;\alpha,\beta)$ satisfies 
	\vspace{0em}
	\begin{align*}
		\Pi(A_n)  \geq \exp(-n C_2 \e_n^2) , \forall n\geq n_2. 
	\end{align*} 
\end{lemma}

Fourth, it is straightforward to see that the newsvendor model risk $R(a,\theta)$ is bounded below for a given $a \in \sA$.
\begin{lemma}\label{lem:nv4}
	For any $a \in \sA$ and positive constants $h$ and $b$, the newsvendor model risk
	\[   R(a,\theta) = \left( ha- \frac{h}{\theta} + (b+h) \frac{e^{-a \theta}}{\theta} \right) \geq  \left(\frac{  h \underline{a}^2 {\theta^*}}{(1+a\theta^*)}\right), \]
	where $\underline{a} := \min \{a\in \mathcal A\}$ and $\theta^*$ satisfies \(h - (b+h)e^{-a\theta^*}(1+a\theta^*) = 0  \).
\end{lemma}
 
This implies that $R(a,\theta)$ satisfies Assumption~\ref{assump:Asf15}. 
Finally, we also show that the newsvendor model risk satisfies Assumption~\ref{assump:Asf14}.
\begin{lemma}\label{lem:nv5}
	Fix $n \geq 1$ and $\gamma>0$. For any $\e>\e_n$ and any $a\in  \sA$, $R(a,\theta )$ satisfies  
	\begin{align*}	
		\bbE_{\Pi}[\Ind{\left\{{ R(a,\theta ){\gamma}} > {C_4(\gamma)n \e^2} \right\} } e^{ {\gamma} R(a,\theta)}]  \leq \exp(-C_5(\gamma) n\e^2),
	\end{align*}
	for any  $C_4(\gamma) > 2\gamma \left(h\overline{a} + \frac{b}{T}  \right) $ and $C_5(\gamma)= C_4(\gamma) - 2\gamma \left(h\overline{a} + \frac{b}{T}  \right) $, where $\overline{a} := \max \{a\in \mathcal A\}$.
\end{lemma}

Note that Lemma~\ref{lem:nv2} implies that $C=\frac{C_1}{2 (K_1^{NV})^2}$ for any constant $C_1>0$. Fixing $\alpha=1$ and using Lemma~\ref{lem:nv3} we can choose $C_2=C_3=2$. Now,  $C_1$ can be chosen large enough such that $C > C_4(\gamma)+C_5(\gamma)$ for a given risk sensitivity  $\gamma>0$. Therefore, the condition on constants in Theorem~\ref{thrm:thm1} reduces to $C_5(\gamma)> 2+ C_2+C_3 =5$, and it can be satisfied easily by fixing $C_5(\gamma)=5.1 $(say). 

These lemmas show that when the demand distribution is exponential and with a non-conjugate truncated inverse-gamma prior, our results in Theorem~\ref{thrm:OGValue} and~\ref{thrm:OGdecision} can  be used for RSVB method to bound the optimality gap in decisions and  values for various values of the risk-sensitivity parameter $\gamma$. 
Recall that the bound obtained in Theorem~\ref{thrm:OGdecision} depends on $\e_n^2$ and $ \eta_n^R(\gamma)$.

Lemma~\ref{lem:nv3} implies that $\e_n^2=\frac{\log n}{n}$, but in order to get the complete bound we further need to characterize $\eta_n^R(\gamma)$.  Recall that, as a consequence of Assumption~\ref{assump:Asf11} in Proposition~\ref{prop:eta_n}, for a given $C_8= -  \inf_{Q \in \sQ} \inf_{a \in \sA}  \bbE_Q[R(a,\theta)]  $ that $C_9>0$ and \(\eta_n^R(\gamma) \leq \gamma  n^{-1}C_8 +C_9\e_n'^2. \)
 
Therefore, in our next result, we show that in the newsvendor setting, we can construct a sequence $\{Q_n(\theta)\} \subset \sQ$ that satisfies Assumption~\ref{assump:Asf11}, and thus identify $\e_n'$ and the constant $C_9$. We fix $\sQ$ to be the family of shifted gamma distributions with support $[T,\infty)$. 
\begin{lemma}\label{lem:nv6}
	Let $\{Q_n(\theta)\}$ be a sequence of shifted gamma distributions with shape parameter $a = n$ and rate parameter $b= \frac{n}{\theta_0}$, then for truncated inverse gamma prior and exponentially distributed likelihood model
	\[ \frac{1}{n} \left[ \scKL\left(Q_n(\theta)\|\Pi(\theta) \right) + \bbE_{Q_n(\theta)} \left[ \scKL\left(dP^n_{0}(\nX))\| dP^n_{\theta}(\nX) \right) \right]  \right] \leq C_9 \e_n'^2,  \]
where $\e_n'^2= \frac{\log n }{n}$ and $C_9 = \frac{1}{2} + \max \left(0,  2+  \frac{2\beta}{\theta_0}  - \log \sqrt{2\pi }    - \log \left( \frac{\beta^\alpha}{\Gamma(\alpha)} \right)  + \alpha\log \theta_0 \right)$ and prior parameters are chosen such that $C_9>0$.
	\end{lemma}

As a specific instance, consider the naive VB case. Since $\gamma\to 0^{+}$, the term $\eta_n(0)$ in Theorem~\ref{thrm:NVOGdecision} is bounded above by $C_9 \e_n'^2$ , where $C_9$ and $\e_n'^2$ are derived in the result above. For the LCVB case, observe that Lemma~\ref{lem:nv4} implies that $R(\cdot,\cdot)$ is bounded below and therefore $C_8 \leq - \left( \frac{  h \underline{a}^2 {\theta^*}}{(1+\bar a\theta^*)} \right) $, where $h,\underline{a}, \bar a,  \text{  and } \theta^*$ are given to  the modeler or are easily computable. Now since  $C_8< 0$, it is straight forward to observe that $\eta_n^{R}(\gamma)$ term in Theorem~\ref{thrm:LCOGdecision} is  bounded above by $C_9 \e_n'^2$. 

Now, using the result established in Lemmas above, we bound the optimality gap in  values for the single product newsvendor model risk.
\begin{theorem}\label{thrm:OGValue_NV}
    Fix $\gamma>0$. Suppose that the set $\sA$ is compact. 
    Then, for the newsvendor model with exponentially distributed demand with  rate $\theta \in \Theta = [T,\infty)$, prior distribution $\Pi(\cdot) = \text{Inv}-\Gamma_{\Theta}(\cdot;\alpha,\beta)= \text{Inv}-\Gamma(A\cap \Theta;\alpha,\beta)/\text{Inv}-\Gamma(\Theta;\alpha,\beta) $, and the variational family fixed to shifted (by $T>0$) gamma distributions,  
    and for any $\tau>0$, 
    the $P_0^n-$ probability of the following event 
    \begin{align}
        \bigg\{ \nX:   R( \ars,\theta_0) - \inf_{z \in \mathcal{A}} R(z,\theta_0)  \leq 2\tau  \textcolor{black}{M'(\gamma)} \left(\frac{\log n }{n }\right)^{1/2}  \bigg\}
    \end{align}
    is at least $1-\tau^{-1}$ for sufficiently large $n$ and for some mapping $\textcolor{black}{M':\bbR^{+}\to\bbR^{+}}$, where $R(\cdot,\theta)$ is the newsvendor model risk.
\end{theorem}
\begin{proof}
    The proof is a direct consequence of Theorem~\ref{thrm:OGValue}, Lemmas~\ref{lem:nv2},~\ref{lem:nv3},~\ref{lem:nv4}, ~\ref{lem:nv5}, ~\ref{lem:nv6}, and Proposition~\ref{prop:eta_gamma}.
\end{proof}

Next, we bound the  optimality gap between the approximate optimal decision rule $\ars$ and the true optimal decision. The bound, in particular, depends on the curvature of $R(a,\theta_0)$ around the true optimal decision, defined using the growth condition  in Assumption~\ref{assump:Growth}. 

\begin{theorem}\label{thrm:OGdecision_NV}
    \noindent Fix $\gamma>0$. Suppose that the set $\sA$ is compact and $R(a,\theta_0)$ satisfies the growth condition in~Assumption~\ref{assump:Growth},  with $\Psi(d)$  such that $  \Psi(d)/d^{\delta} = \kappa$, for any  $\delta >0$. Then, for the newsvendor model with exponentially distributed demand with  rate $\theta \in \Theta = [T,\infty)$, prior distribution $\Pi(\cdot) = \text{Inv}-\Gamma_{\Theta}(\cdot;\alpha,\beta)= \text{Inv}-\Gamma(A\cap \Theta;\alpha,\beta)/\text{Inv}-\Gamma(\Theta;\alpha,\beta) $, and the variational family fixed to shifted (by $T>0$) gamma distributions,   and for any $\tau>0$,
    the $P_0^n-$ probability of the following event 
    \[ \left\{ \tilde X_n :  H \left(\ars(\tilde X_n), \underset{z \in \mathcal{A} }{\arg \min}   ~ R(z,\theta_0) \right)    \leq  \left[\frac{2 \tau}{\kappa} \textcolor{black}{M'(\gamma)} \left(\frac{\log n }{n }\right)^{1/2} \right]^\frac{1}{\delta} \right\}  \]
    is at least $1-\tau^{-1}$ for sufficiently large $n$ and for some mapping $\textcolor{black}{M':\bbR^{+}\to\bbR^{+}}$, where $R(\cdot,\theta)$ is the newsvendor model risk.
\end{theorem}

\begin{proof}
    The proof is a direct consequence of Theorem~\ref{thrm:OGdecision}, Lemmas~\ref{lem:nv2},~\ref{lem:nv3},~\ref{lem:nv4}, ~\ref{lem:nv5}, ~\ref{lem:nv6}, and Proposition~\ref{prop:eta_gamma}.
\end{proof}



%

Next, we demonstrate the effect of varying the risk-sensitivity parameter $\gamma$. We fix $\theta_0 = 0.1$, $b=1$, $h=5$, $\alpha =1 \text{, and } \beta=4.1$. We run RSVB algorithm  with $\gamma\in \{0 (\text{ naive }),1,2,4.5,5,6\}$ and repeat the experiment over 100 sample paths. We plot the results in~Figure~\ref{fig:fig5}. In Figure~\ref{fig:fig5}(a) and (b), we plot the optimality gap in values and decisions, that is $R(\ars(\gamma),\theta_0)- R(a_0^*,\theta_0)$ and $|\ars(\gamma) - a_0^*|$ respectively, for various values of $\gamma$. We observe that the gap decreases when $n$\ increases. 
This  observation supports our results in Propositions~\ref{prop:eta_n} and~\ref{prop:eta_gamma} that establishes the properties of $\eta_n^R(\gamma)$ as $n$ increases.
Lastly, in Figure~\ref{fig:fig5}(c), we plot the variance of the RSVB posterior as $n$ increases for various values of $\gamma$; as anticipated the variance reduces as $n$ increases. To observe the effect of $\gamma$, first recall that as $\gamma$ increases the decision maker become more risk averse and so is our algorithmic framework RSVB. Indeed, from the rightmost variance plot in Figure~\ref{fig:fig5} it is evident  that for larger value of $\gamma$ ($>4$) the RSVB posterior is more concentrated on the subset  of $\Theta$, where risk is more and consequently  we observe large optimality gaps in values and decision (see first two plots in Figure~\ref{fig:fig5} ). Moreover, as $n$ increase the  effect of larger $\gamma$ reduces, since  as $n$ increases the incentive to deviate  from the  posterior reduces (due to increased \scKL~divergence dominance for larger $n$ in  RSVB). 


\begin{figure}[ht]
    \begin{subfigure}[b]{0.32\textwidth}
        \includegraphics[width=\textwidth]{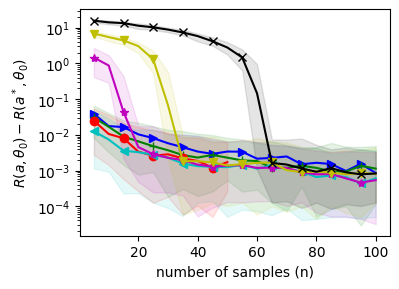}
        \caption{OG in Values}
    \end{subfigure}
    \hfill
    \begin{subfigure}[b]{0.32\textwidth}
        \includegraphics[width=\textwidth]{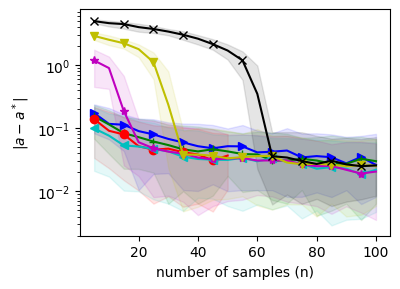}
        \caption{OG in Decisions}
    \end{subfigure}
    \hfill
    \begin{subfigure}[b]{0.32\textwidth}
        \includegraphics[width=\textwidth]{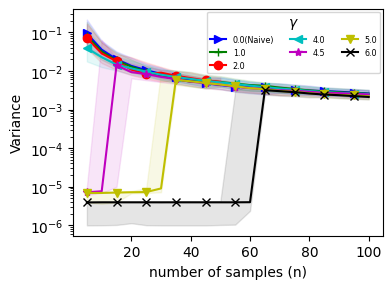}
        \caption{Variance}
    \end{subfigure}
    \caption{Optimality gap in values and decisions, and the variance of the RSVB posterior (mean over 100 sample paths) against the number of samples ($n$) for various values of $\gamma$.}
        \label{fig:fig5}
\end{figure}

\subsection{Multi-product newsvendor problem}~\label{sec:Mnews}

Analogous to the one-dimensional newsvendor loss function,  the loss function in its multi-product version is defined as $$\ell( {a},\xi) := h^T(  a-\xi)^+ + b^T(\xi-  a)^+$$
where $h$ and $b $ are given vectors of underage and overage costs respectively for each product and mapping $(\cdot)^+$ is defined component-wise. We assume that there are $d$ items or products and $\xi  \in  \bbR^{d}$ denotes the random vector of demands.
Let $a\in \sA\subset \bbR^d_+$ be the inventory or decision variable, typically assumed to take values in a compact decision space $\mathcal{A}$ with $\underline{a} := \{\{\min \{a_i: a_i\in \mathcal A_i\}\}_{i=1}^{d}$ and $\bar{a} := \{\{\max \{a_i: a_i\in \mathcal A_i\}\}_{i=1}^{d}$, and $\underline{a} > 0$, where $\sA_i$ is the marginal set of $i^{th}$ component of $\sA$. The random demand is assumed to be multivariate Gaussian, with 
unknown mean parameter $\theta \in \bbR^d$ but with known covariance matrix $\Sigma$. We also assume that $\Sigma$ is a symmetric positive definite matrix and can be decomposed as $Q^T\Lambda Q$, where $Q$ is an orthogonal matrix and $\Lambda$ is a diagonal matrix consisting of respective eigenvalues of $\Sigma$. We also define $\overline \Lambda = \max_{i\in \{1,2,\ldots d\} } 
\Lambda_{ii}$ and $\underbar{$\Lambda$} = \min_{i\in \{1,2,\ldots d\} } 
\Lambda_{ii}$. 
The model risk 
\begin{align}\label{eq:news-lossd}
    \nonumber
    R(a,\theta) &= \mathbb{E}_{P_{\theta}}[\ell(a,\xi)]
    = \sum_{i=1}^{d} \bbE_{P_{\theta_i}}[h_i (a_i - \xi_i)^+ + b_i (\xi_i - a_i)^+ ]
    \\
    \nonumber
    &= \sum_{i=1}^{d} \Bigg[(h_i+b_i)a_i \Phi\left(\frac{(a_i-\theta_{i})}{\sigma_{ii}}\right) - b_i a_i + \theta_{i} (b_i-h_i)  
    \\
    \nonumber
    &\quad \quad + \sigma_{ii} \left[ h\frac{\phi\left(\frac{(a_i-\theta_{i})}{\sigma_{ii}}\right)}{\Phi\left(\frac{(a_i-\theta_{i})}{\sigma_{ii}}\right)} + b\frac{\phi\left(\frac{(a_i-\theta_{i})}{\sigma_{ii}}\right)}{1-\Phi\left(\frac{(a_i-\theta_{i})}{\sigma_{ii}}\right)} \right] \Bigg],
\end{align}
which is convex in $a$. Here $P_{\theta_i}$ is the marginal distribution of $\xi$ for $i^{th}$ product, $\phi(\cdot)$ and $\Phi(\cdot)$ are probability and cumulative distribution function of the standard Normal distribution. We also assume that the true mean parameter $\theta_0$ lies in a  compact subspace $\Theta\subset \bbR^d$. We fix the prior to be uniformly distributed on  $\Theta$ with no correlation across its components, that  is $\pi(A)= \frac{m(A\I \Theta)}{m(\Theta)} = \prod_{i=1}^{d} \frac{m(A_i\I \Theta_i)}{m(\Theta_i)}$, where $m(B)$  is the Lebesgue measure (or volume) of $B\subset \bbR^d$ 
As in the previous  example, we fix the sieve set $\Theta_n(\e) = \Theta$, which clearly implies that $\Pi(\theta)$ places no mass on the complement of this set and therefore satisfies Assumption~\ref{assump:Asf2}. 

Then under the condition that the true demand distribution has  a multivariate Gaussian distribution (with known $\Sigma$)  and mean $\theta_0$ ($P_0 \equiv P_{\theta_0}$), we demonstrate the existence of test functions satisfying Assumption~\ref{assump:Asf1} by constructing a test function unlike the single-product newsvendor problem with exponential demand.

\begin{lemma}~\label{lem:mnv2}
    Fix $n\geq 1$. Then, for any $\e>\epsilon_n := \frac{1}{\sqrt{n}}$ with $\e_n \to 0$, and $n\e_n^2 \geq 1$ and  test function $\phi_{n,\e} := \mathbbm{1}_{ \left\{ \nX:  \left\|\hat \theta_n - \theta_0 \right\| >  \sqrt{\tilde C\e^2} \right\} }$,  $L_n^{MNV}(\theta,\theta_0) = n$ $ \left(\sup_{a \in \mathcal{A}} \vert R(a,\theta) - R(a,\theta_0)  \vert \right)^2 $	satisfies
    \vspace{0em}
    \begin{align}
        \bbE_{P^{n}_0}[ \phi_n ]  & \leq C_0 \exp(-C n \e^2 ),  \\
        \underset{\{ \theta\in \Theta : L_n^{MNV}(\theta,\theta_0) \geq C_1n \e^2  \}}{\sup} \bbE_{P^{n}_{\theta}}[ 1- \phi_n ]  & \leq \exp(-C n \e^2 ),\label{eq:eq13mnv}
    \end{align} 
    with $C_0=1$, $C_1= 4 K^2 C$ and $C= 1/8  \left(\frac{\tilde C}{d\overline \Lambda} -1\right) $ for sufficiently large $\tilde C$ such that $C>1$ and $\overline \Lambda = \max_{i\in \{1,2,\ldots d\} } 
    \Lambda_{ii}$, where $K=\sup_{\sA,\Theta} \|\partial_{\theta} R(a,\theta)\|$.
\end{lemma}


In the following result, we show that there exist appropriate constants such that prior distribution satisfies Assumption~\ref{assump:Asf3} when the demand distribution is a multivariate Gaussian with unknown mean.
\begin{lemma}\label{lem:mnv3}
    Fix $ n_2\geq 2$ and any $\lambda >1$. Let   		\(A_n := \left\{ \theta \in \Theta :D_{1+\lambda} \left( P_0^n \| P_{\theta}^n \right) \leq C_3 n \e_n^2 \right\}\), 	where $D_{1+\lambda} \left( P_0^n \| P_{\theta}^n \right)$ is the R\'enyi Divergence between $P_0^n$ and  $P_{\theta}^n$. Then for $\e_n^2=\frac{\log n}{n}$ and any $C_3>0$ such that $C_2=\frac{4d}{\overline \Lambda(\lambda+1)\left(\prod_{i=1}^{d} m(\Theta_i)\right)^{2/d}}  C_3 \geq 2$ and for large enough $n$, the uncorrelated uniform prior restricted to $\Theta$ satisfies 
    \begin{align*}
        \Pi(A_n)  \geq \exp(-n C_2 \e_n^2) .
    \end{align*} 
\end{lemma}

Next, it is straightforward to see that the multi-product newsvendor model risk $R(a,\theta)$ is bounded below for a given $a \in \sA$ on a  compact set $\Theta$ and thus it satisfies Assumption~\ref{assump:Asf15}.
Finally, we also show that the newsvendor model risk satisfies Assumption~\ref{assump:Asf14}.
\begin{lemma}\label{lem:mnv5}
    Fix $n \geq 1$ and $\gamma>0$. For any $\e>\e_n$ and $a\in  \sA$, $R(a,\theta )$ satisfies  
    \begin{align*}	
        \bbE_{\Pi}[\Ind{\left\{{ G(a,\theta ){\gamma}} > {C_4(\gamma)n \e^2} \right\} } e^{ {\gamma} G(a,\theta)}] \leq \exp(-C_5(\gamma) n\e_n^2),
    \end{align*}
    for any  $C_4(\gamma) > 2\gamma \sup_{\{a,\theta\}\in \sA \otimes \Theta } G(a,\theta) $ and $C_5(\gamma)= C_4(\gamma) - 2\gamma \sup_{\{a,\theta\}\in \sA \otimes \Theta } G(a,\theta) $.
\end{lemma}

Similar to single product example, in our next result, we show that in the multi-product newsvendor setting, we can construct a sequence $\{Q_n(\theta)\} \in \sQ$ that satisfies Assumption~\ref{assump:Asf11}, and thus identify $\e_n'$ and constant $C_9$. We fix $\sQ$  to be the family of uncorrelated Gaussian distributions restricted to $\Theta$.

\begin{lemma}\label{lem:mnv6}
    Let $\{Q_n(\theta)\}$ be a sequence of product of $d$ univariate Gaussian distribution  defined as $q_n^i(\theta) \propto \frac{1}{\sqrt{2\pi \sigma_{i,n}^2}} e^{-\frac{1}{2\sigma_{i,n}^2}(\theta-\mu_{i,n})^2} \Ind{\Theta_i} =  \frac{\mathcal{N}(\theta_i| \mu_{i,n}, \sigma_{i,n})\Ind{\Theta_i}}{\mathcal{N}(\Theta_i| \mu_{i,n}, \sigma_{i,n})} $ and  fix $\sigma_{i,n}=1/\sqrt{n}$ and  $\theta_{i} = \theta_0^i$  for all $i\in \{1,2,\ldots,d\}$.  Then for uncorrelated uniform distribution restricted to $\Theta$  and multivariate normal  likelihood model
    \[ \frac{1}{n} \left[ \scKL\left(Q_n(\theta)\|\Pi(\theta) \right) + \bbE_{Q_n(\theta)} \left[ \scKL\left(dP^n_{0}(\nX))\| dP^n_{\theta}(\nX) \right) \right]  \right] \leq C_9 \e_n'^2,  \]
    where $\e_n'^2= \frac{\log n }{n}$ and $C_9:= \frac{d}{2} + \max \left(0,   -\sum_{i=1}^{d}  [\log (\sqrt{2\pi e} ) -\log(m(\Theta_i))  ] + \frac{d}{2} {\underbar{$\Lambda$}^{-1}} \right)$.
\end{lemma}

Now, using the result established in lemmas above, we bound the optimality gap in  values for the multi-product newsvendor model risk.
\begin{theorem}\label{thrm:OGValue_MNV}
    Fix $\gamma>0$. Suppose that the set $\sA$ is compact.
    Then, for the multi-product newsvendor model with multivariate Gaussian distributed demand with known covariance  matrix $\Sigma$ and unknown mean vector $\theta $ lying  in a compact subset $\Theta\subset \bbR^d$, prior   $\Pi(\cdot) = \prod_{i=1}^{d}\frac{m(\{\cdot\}\cap \Theta_i)}{m(\Theta_i)}$, and the variational family fixed to uncorrelated Gaussian distribution restricted to $\Theta$,  
    and for any $\tau>0$, 
    the $P_0^n-$ probability of the following event 
    \begin{align}
        \bigg\{ \nX:   R( \ars,\theta_0) - \inf_{z \in \mathcal{A}} R(z,\theta_0)  \leq 2\tau  \textcolor{black}{M'(\gamma)} \left(\frac{\log n }{n }\right)^{1/2}  \bigg\}
    \end{align}
    is at least $1-\tau^{-1}$ for sufficiently large $n$ and for some mapping $\textcolor{black}{M':\bbR^{+}\to\bbR^{+}}$, where $R(\cdot,\theta)$ is the multi-product newsvendor model risk.
\end{theorem}
\begin{proof}
    The proof is a direct consequence of Theorem~\ref{thrm:OGValue}, Lemmas~\ref{lem:mnv2},~\ref{lem:mnv3},~\ref{lem:mnv5},~\ref{lem:mnv6}, and Proposition~\ref{prop:eta_gamma}.
\end{proof}

Next, we bound the  optimality gap between the approximate optimal decision rule $\ars$ and the true optimal decision. 

\begin{theorem}\label{thrm:OGdecision_MNV}
    \noindent Fix $\gamma>0$. Suppose that the set $\sA$ is compact and $R(a,\theta_0)$ satisfies the growth condition in~Assumption~\ref{assump:Growth},  with $\Psi(d)$  such that $  \Psi(d)/d^{\delta} = \kappa$, for any  $\delta >0$. 
    Then, for the multi-product newsvendor model with multivariate Gaussian distributed demand with known covariance  matrix $\Sigma$ and unknown mean vector $\theta $ lying  in a compact subset $\Theta\subset \bbR^d$, prior   $\Pi(\cdot) = \prod_{i=1}^{d}\frac{m(\{\cdot\}\cap \Theta_i)}{m(\Theta_i)}$, and the variational family fixed to uncorrelated Gaussian distribution restricted to $\Theta$,  
    and for any $\tau>0$, 
    the $P_0^n-$ probability of the following event 
    \[ \left\{ \tilde X_n :  H \left(\ars(\tilde X_n), \underset{z \in \mathcal{A} }{\arg \min}   ~ R(z,\theta_0) \right)    \leq  \left[\frac{2 \tau}{\kappa} \textcolor{black}{M'(\gamma)} \left(\frac{\log n }{n }\right)^{1/2} \right]^\frac{1}{\delta} \right\}  \]
    is at least $1-\tau^{-1}$ for sufficiently large $n$ and for a known function $\textcolor{black}{M'(\gamma)}$, where $R(\cdot,\theta)$ is the multi-product newsvendor model risk.
\end{theorem}

\begin{proof}
    The proof is a direct consequence of Theorem~\ref{thrm:OGdecision}, Lemmas~\ref{lem:mnv2},~\ref{lem:mnv3},~\ref{lem:mnv5},~\ref{lem:mnv6}, and Proposition~\ref{prop:eta_gamma}.
\end{proof}

%
%

\subsection{Gaussian  process classification}

Consider a problem of classifying an input pattern or features $Y$ lying in measure space $([0,1]^d,\sY,\nu)$ into one of two classes $\{-1,1\}$, where $\xi\in\{-1,1\}$ denote the class of $Y$. For  a given  $Y$,  we model the classifier using a Bernoulli distribution $p(\xi|Y,\theta)= \Psi_{\xi}(\theta(Y))$, where  $\theta:[0,1]^d \to \bbR$ is a non-parametric model parameter in a separable Banach space $(\Theta,\|\cdot\|)$ and measurable functions $\Psi_{1}(x)= (1+e^{-x})^{-1}$ and $\Psi_{-1}(x) = 1-\Psi_{1}(x)$. Note that $\Psi_{1}(\cdot)$ is a logistic function. We denote $\psi(\cdot)$ as the derivative of $\Psi_{1}(\cdot)$. We assume that $\nu(\cdot)$ is independent of $\xi$. Thus the sequence of independent observations  $\{\nY,\nX\}=\{(Y_1,\xi_1),(Y_2,\xi_2),\ldots$ $,(Y_n,\xi_n)\}$ are assumed to be generated  from model \[ dP_{\theta}(\xi,Y) = p(\xi|Y,\theta) \nu(Y). \]

In the above binary classification  problem, the objective is to estimate $\theta(\cdot)$ using the observation vector $\{\nY,\nX\}$.   
We posit a Gaussian process (GP) prior $\Pi(\cdot)$ on $\theta(\cdot)\in \Theta$ (to be defined later).  We also assume that  $\nu(\cdot)$  is known and  we do not place any  prior on it. Consequently, the posterior distribution over $\theta(\cdot)$ given observations $\{\nY,\nX\}$ can be defined as 
\[ d\Pi (\theta|\{\nY,\nX\}) = \frac{d\Pi(\theta) \prod_{i=1}^{n}\Psi_{\xi_i}(\theta(Y_i)) \nu(Y_i)  }{\int  \prod_{i=1}^{n}\Psi_{\xi_i}(\theta(Y_i)) \nu(Y_i) d\Pi(\theta) } = \frac{d\Pi(\theta) \prod_{i=1}^{n}\Psi_{\xi_i}(\theta(Y_i))  }{\int  \prod_{i=1}^{n}\Psi_{\xi_i}(\theta(Y_i))  d\Pi(\theta) } .\] 
Consider the loss function $\ell(a,\xi) $ defined  as
\begin{align} 
    \ell(a,\xi) : = \begin {cases} 
    0, \quad ~\text{if } a=\xi,
    \\
    c_+, \quad \text{if } a=+1, \xi=-1,
    \\
    c_-, \quad \text{if } a=-1, \xi=+1,
    \end{cases}
    \end{align}  
where $c_+$  and $c_-$  are known positive constants.
The model risk is given  by
\begin{align}\label{eq:GPCloss}
    R(a,\theta) &= \bbE_{P_{\theta}} [\ell(a,\xi)] = \begin {cases} 
    c_+\bbE_{\nu}[\Psi_{-1}(\theta(y))], \quad a=+1,
    \\
    c_- \bbE_{\nu}[\Psi_{1}(\theta(y))], \quad a=-1.
\end{cases} 
    \end{align}
We define the distance function as $L_n^{GP}(\theta,\theta_0) = n \left( \sup_{a \in \mathcal{A}} | R(a,\theta) - R(a,\theta_0)  | \right)^2 $. In anticipation of demonstrating that the binary classification  model with  GP prior and distance function $L_n^{GP}$  satisfy the desired set of assumptions, we recall the following result,  from~\cite{Zanten08}, which will be central in establishing Assumptions~\ref{assump:Asf1},~\ref{assump:Asf2}, and~\ref{assump:Asf3}.

\begin{lemma}~\label{lem:GP}[Theorem 2.1~\cite{Zanten08}]
    Let $\theta(\cdot)$ be  a Borel measurable, zero-mean  Gaussian random element in  a separable Banach space $(\Theta,\|\cdot\|)$ with reproducing  kernel Hilbert space (RKHS) $(\bbH,\|\cdot\|_{\bbH})$ and let $\theta_0$ be contained in the closure of $\bbH$ in $\Theta$. For any $\e>\e_n$ satisfying $\varphi_{\theta_0}(\e)\leq n\e^2$ , where 
    \begin{align}
        \varphi_{\theta_0}(\e)= \inf_{h\in \bbH:\|h-\theta_0\|<\e} \|h\|^2_{\bbH} - \log \Pi(\|\theta\| < \e) 
        \label{eq:psi}
        \end{align}
     and any $C_{10}>1$ with  $e^{-C_{10} n\e_n^2} < 1/2 $, there exists a measurable set $\Theta_n(\e) \subset \Theta$  such that 
    \begin{align}
        \label{eq:testGP}
        \log N (3\e, \Theta_n(\e),\|\cdot\|) &\leq 6C_{10} n\e^2,
        \\
        \Pi(\theta \notin \Theta_n(\e) ) &\leq e^{-C_{10} n\e^2},
        \label{eq:priorGP}
        \\
        \Pi(\|\theta-\theta_0\|<4\e_n) &\geq e^{-n\e_n^2}.
        \label{eq:renyiGP}
        \end{align}
    \end{lemma}
The proof of the lemma above can be easily adapted from the proof of~~\cite[Theorem 2.1]{Zanten08}, which  is specifically for $\e=\e_n$. 
Notice that the result above is true  for any norm $\|\cdot\|$ on the Banach space if that satisfies $\varphi_{\theta_0}(\e)\leq n\e^2$.
Moreover, if $\varphi_{\theta_0}(\e_n)\leq n\e_n^2$ is true, then it also holds for any $\e>\e_n$, since by definition $\varphi_{\theta_0}(\e)$ is a decreasing  function of $\e$.

All the  results in the  previous lemma depend on $\varphi_{\theta_0}(\e)$ being less than $ n\e^2$. In particular, observe that the second term in the definition of $\varphi_{\theta_0}(\e)$  depends on the  prior distribution on $\Theta$. Therefore,
~\cite[Theorem 4.5]{Zanten08} show  that $\varphi_{\theta_0}(\e_n)\leq n\e_n^2$ ( with $\|\cdot\|$ as supremum norm and for $\e_n$ as defined later in~\eqref{eq:rate} ) is satisfied by the  Gaussian  prior  of type 
\begin{align} W(\cdot)=  \sum_{j=1}^{\bar J_{\alpha}} \sum_{k=1}^{2^{jd}} \mu_j Z_{j,k} \vartheta_{j,k}(\cdot), 
\label{eq:GPprior}
\end{align}
where $\{\mu_j\}$ is a sequence that decreases with $j$, $\{Z_{i,j}\}$ are i.i.d. standard Gaussian random variables and $\{\vartheta_{j,k}\}$ form  a double-indexed orthonormal  basis (with respect to measure $\nu$), that is $\bbE_{\nu}[\vartheta_{j,k}\vartheta_{l,m}] = \Ind{\{j=l,k=m\}}$).
$\bar J_{\alpha}$ is the smallest integer satisfying $2^{\bar J_{\alpha} d} = n^{d/(2\alpha+d)}$ for  a  given $\alpha>0$. 
In particular, the GP above is constructed using the function class that is supported on $[0,1]^d$ and has a wavelet expansion, 
\[ w(\cdot) = \sum_{j=1}^{\infty} \sum_{k=1}^{2^{jd}} w_{j,k} \vartheta_{j,k}(\cdot). \] The wavelet  function space is equipped with the $L_2-$norm: $\|w\|_2 = \sum_{j=1}^{\infty}$ $ \left( \sum_{k=1}^{2^{jd}} |w_{j,k}|^2 \right)^{1/2}$; the supremum norm: $\|w\|_{\infty}= \sum_{j=1}^{\infty} 2^{jd} \max_{1\leq k\leq 2^{jd}}  |w_{j,k}|$; and the Besov $(\beta,\infty,\infty)-$norm: $\|w\|_{\beta;\infty,\infty}= \sup_{1\leq j < \infty} 2^{j\beta} 2^{jd} $ $\max_{1\leq k\leq 2^{jd}}  |w_{j,k}|$. Note that $W$ induces a measure over the RKHS $\bbH$,  defined as a collection of truncated  wavelet functions
\[ w(\cdot)= \sum_{j=1}^{\bar J_{\alpha}} \sum_{k=1}^{2^{jd}} w_{j,k}  \vartheta_{j,k}(\cdot), \] 
with norm induced by the inner-product on $\bbH$ as
\( \|w\|_{\bbH}^2 = \sum_{j=1}^{\bar J_{\alpha}} \sum_{k=1}^{2^{jd}} \frac{w_{j,k}^2}{\mu_j^2}. \) The RKHS kernel $K:[0,1]^d\times[0,1]^d \mapsto \bbR$ can be easily derived as 
\begin{align*}
    K(x,y)=\bbE[W(x)W(y)]&=  \bbE\left[\left(\sum_{j=1}^{\bar J_{\alpha}} \sum_{k=1}^{2^{jd}} \mu_j Z_{j,k} \vartheta_{j,k}(y) \right)\left(\sum_{j=1}^{\bar J_{\alpha}} \sum_{k=1}^{2^{jd}} \mu_j Z_{j,k} \vartheta_{j,k}(x) \right)\right] 
    \\
    &=\sum_{j=1}^{\bar J_{\alpha}} \sum_{k=1}^{2^{jd}}  \mu_j^2   \vartheta_{j,k}(y)  \vartheta_{j,k}(x). 
\end{align*}
Indeed, by the definition of this kernel and inner product, observe that
\begin{align*}
    \langle K(x,\cdot),w(\cdot)  \rangle
    = \sum_{j=1}^{\bar J_{\alpha}} \sum_{k=1}^{2^{jd}} w_{j,k} \mu_j^2 \vartheta_{j,k}(x)\frac{1}{\mu_j^2} = w(x).
\end{align*}
Moreover,
\( \langle K(x,\cdot), K(y,\cdot) \rangle = \sum_{j=1}^{\bar J_{\alpha}} \sum_{k=1}^{2^{jd}}  \mu_j^2 \vartheta_{j,k}(x) \mu_j^2 \vartheta_{j,k}(y) \frac{1}{\mu_j^2}  = K(x,y).\)
It is clear from its definition that $W$ is a centered Gaussian  random field on the RKHS.

Next, using the definition of the kernel, we derive the covariance operator of the Gaussian random field $W$. Recall that $Y\sim \nu$, which enables us to define  the covariance operator $\mathcal{C}$, following~\cite[(6.19)]{Stuart2010} as
\[ (\mathcal{C } h_{\nu})(x) = \int_{[0,1]^d} K(x,y) h_{\nu}(y) d\nu(y).\]
Also, observe that $\{\mu_j^2,\varphi_{j,k}\}$ is the eigenvalue and eigen function pair of the covariance operator  $\mathcal{C}$. Consequently, using \text{Karhunen Lo\'eve expansion}~\cite[Theorem 6.19]{Stuart2010} the prior induced by $W$ on $\bbH$ is a Gaussian distribution  denoted  as $\mathcal{N}(0,\mathcal{C})$.  We also recall the \text{Cameron-Martin space} denoted  as $\text{Im}(\mathcal{C}^{1/2})$ associated with a Gaussian measure $\mathcal{N}(0,\mathcal{C})$ on $\bbH$ to be the intersection of all linear spaces of full measure under $\mathcal{N}(0,\mathcal{C})$~\cite[(page 530)]{Stuart2010}. In particular, $\text{Im}(\mathcal{C}^{1/2})$ is the Hilbert space with inner product $\langle \cdot,\cdot \rangle_{\mathcal{C}} = \langle \mathcal{C}^{-1/2}\cdot,\mathcal{C}^{-1/2}\cdot \rangle$.

Next, we show the existence of  test functions in the following  result.

\begin{lemma}~\label{lem:gp2}
     For any $\e>\epsilon_n $ with $\e_n \to 0$, $n\e_n^2 \geq 2\log 2$, and $\varphi_{\theta_0}(\e)\leq n\e^2$,  there exists a test function $\phi_n$~(depending  on $\e$) such that $L_n^{GP}(\theta,\theta_0) = n \left( \sup_{a \in \mathcal{A}} | R(a,\theta) - R(a,\theta_0)  | \right)^2 $	satisfies
    \vspace{0em}
    \begin{align}
        \bbE_{P^{n}_0}[ \phi_n ]  & \leq C_0 \exp(-C n \e^2 ), \label{eq:eq12gp} \\
        \underset{\{ \theta\in \Theta : L_n^{GP}(\theta,\theta_0) \geq C_1n \e^2  \}}{\sup} \bbE_{P^{n}_{\theta}}[ 1- \phi_n ]  & \leq \exp(-C n \e^2 ),\label{eq:eq13gp}
    \end{align} 
    where $C=1/6$, $C_0=2$ and $C_1= (\max(c_+,c_-))^2$. 
\end{lemma}


Assumption~\ref{assump:Asf2} is  a direct consequence of~\eqref{eq:priorGP} in ~Lemma~\ref{lem:GP}. Next, we prove that prior distribution and the likelihood model satisfy Assumption~\ref{assump:Asf3}  using~\eqref{eq:renyiGP} of Lemma~\ref{lem:GP}.

\begin{lemma}\label{lem:gp3}
     For any $\lambda>1$, let   		\(A_n := \left\{ \theta \in \Theta :D_{1+\lambda} \left( P_0^n \| P_{\theta}^n \right) \leq C_3 n \e_n^2 \right\}\), 	where $D_{1+\l} \left( P_0^n \| P_{\theta}^n \right)$
     is the R\'enyi Divergence between $P_0^n$ and  $P_{\theta}^n$.
     Then for any $\e>\e_n$ satisfying $\varphi_{\theta_0}(\e)\leq n\e^2$ and  $C_3= 16(\lambda+1)$ and $C_2=1$, the GP prior satisfies 
    \begin{align*}
        \Pi(A_n)  \geq \exp(-n C_2 \e_n^2) .
    \end{align*} 
\end{lemma}


Assumption~\ref{assump:Asf14} and~\ref{assump:Asf15} are straightforward to satisfy since the model risk function $R(a,\theta)$ is  bounded from above and below.

Now, suppose the  variational family $\sQ_{GP}$ is a class of Gaussian distributions on $\Theta$, defined as $\mathcal{N}(m_q,\mathcal{C}_q)$,
$m_q$ belongs to $\Theta$ and $\mathcal{C}_q$ is the covariance operator defined as $\mathcal{C}_q=\mathcal{C}^{1/2}(I-S)\mathcal{C}^{1/2} $, for any $S$ which is a symmetric and Hilbert-Schmidt (HS) operator on $\Theta$ (eigenvalues of HS operator are square summable). Note that $S$ and $m_q$ span the distributions in $\sQ_{GP}$.

The following lemma verifies Assumption~\ref{assump:Asf11}, for a specific sequence of distributions in $\sQ$. 
\begin{lemma}\label{lem:gp4}
    For a given $J\in\bbN$, let $\{Q_n\}$ be a sequence variational distribution such that $Q_n$ is the measure induced by a GP,  $W_Q(\cdot)= \theta_0^J(y) + \sum_{j=1}^{J} \sum_{k=1}^{2^{jd}} \zeta_j^2 Z_{j,k}  \vartheta_{j,k}(\cdot) $, where  $\theta_0^J(\cdot)= \sum_{j=1}^{J} \sum_{k=1}^{2^{jd}} \theta_{0;j,k}  \vartheta_{j,k}(\cdot) $ and $\zeta_j^2 = \frac{\mu_j^2}{1+n\e_n^2\tau_j^2}$.  Then for GP prior induced by $W=  \sum_{j=1}^{ J} \sum_{k=1}^{2^{jd}} \mu_j Z_{j,k} \vartheta_{j,k}$ and $\mu_j=2^{-jd/2-ja}$  for some $a>0$, $\|\theta_0\|_{\beta;\infty,\infty}<\infty$,  and $\theta_0^{J}(y)$ lie in the Cameron-Martin space $\text{Im}(\mathcal{C}^{1/2})$, we have 
    \[  \frac{1}{n}\scKL(\mathcal{N}(\bar \theta_0^J,\mathcal{C}_q)\|\mathcal{N}(0,\mathcal{C}))  + \frac{1}{n}\bbE_{Q_n} \scKL(P_0^n\| P_{\theta}^n) \leq C_9 \e_n^2,  \]
    where 
    \begin{align}~\label{eq:rate}
        \e_n= \begin{cases}
            n^{-\beta/(2\alpha+d)}\log n & if a\leq \beta \leq \alpha
            \\
            n^{-\alpha/(2\alpha+d)}\log n & if a \leq \alpha  \leq \beta
            \\
            n^{-a/(2a+d)}(\log n)^{d/(2a+d)} & if \alpha \leq a  \leq \beta
            \\
            n^{-\beta/(2a+d)}(\log n)^{d/(2a+d)} & if \alpha \leq \beta  \leq a.
            \end{cases}
      \end{align} 
  and $C_9:=  \max\left({\|\theta_0\|^2_{\beta,\infty,\infty}} ,   \frac{2^{-2a}-2^{-2Ja-2a} }{1-2^{-2a} }, 2^d/(2^d-1),C'\right)$,  where  $C'$ is a positive constant satisfying  $\| \theta_0(y)-\theta_0^J(y))\|^2_{\infty} \leq C' 2^{-2J\beta}$. 
\end{lemma}

Using the result above together with Proposition~\ref{prop:eta_gamma} implies that  the RSVB posterior converges at the same rate as the true posterior, where the convergence rate of the true posterior is derived in~\cite[Theorem 4.5]{Zanten08} for the binary GP classification problem with truncated wavelet GP prior.

Finally, we use the results above  to obtain bound on the optimality gap in values of  the binary GP classification problem.

\begin{theorem}\label{thrm:OGValue_GP}
    Fix $\gamma>0$ and for a given $J\in \bbN$. 
    For the binary GP classification problem with GP prior induced by $W=  \sum_{j=1}^{ J} \sum_{k=1}^{2^{jd}} \mu_j Z_{j,k} \vartheta_{j,k}$ and $\mu_j=2^{-jd/2-ja}$  for some $a>0$, $\|\theta_0\|_{\beta;\infty,\infty}<\infty$,  and $\theta_0^{J}(y)$ lie in the Cameron-Martin space $\text{Im}(\mathcal{C}^{1/2})$, the variational family $\sQ_{GP}$,  
    and for any $\tau>0$, 
    the $P_0^n-$ probability of the following event 
    \begin{align}
        \bigg\{ \nX:   R( \ars,\theta_0) - \inf_{z \in \mathcal{A}} R(z,\theta_0)  \leq 2\tau  \textcolor{black}{M'(\gamma)} \e_n \bigg\}
    \end{align}
    is at least $1-\tau^{-1}$ for sufficiently large $n$ and for some mapping $\textcolor{black}{M':\bbR^{+}\to\bbR^{+}}$, where $R(\cdot,\theta)$ is defined in~\eqref{eq:GPCloss} and $\e_n$ as derived in~\eqref{eq:rate}.
\end{theorem}
\begin{proof}
    The proof is a direct consequence of Theorem~\ref{thrm:OGValue}, Lemmas~\ref{lem:gp2},~\ref{lem:gp3},~\ref{lem:gp4}, and Proposition~\ref{prop:eta_gamma}.
\end{proof}

\section{Conclusion}
In this paper, we introduced a novel framework \textit{risk-sensitive variational Bayes} (RSVB), which can be used to extract computational methods for risk-sensitive approximate Bayesian inference. There are number of future directions that we are currently working on. 
Recall from the simulation result presented in Section~\ref{sec:news}, that the performance of the RSVB framework is affected by the choice of the risk-sensitivity parameter $\gamma$. However, the high-probability (large-sample) bound derived in~Section~\ref{sec:FSB} does not reflect this dependence explicitly. Consequently, digging out this dependence of the RSVB predictive performance on $\gamma$ is one of the future directions that we are currently working on. Although we briefly discuss in the introduction that the Bayesian Markowitz problem~\cite{Lai2011} is a special case of our RSVB framework, we do not discuss this as part of our examples. The main challenge in studying the Bayesian Markowitz problem under the RSVB framework is constructing a test function (Assumption~\ref{assump:Asf1}) with exponentially bounded errors for all $\e>\e_n$ (as required to establish convergence rates of the variational posteriors in Theorem~\ref{thrm:thm1} and also in~\cite[Theorem 2.1]{Zhang2018} and  in~\cite[Theorem 3.5]{Yang2020}). We are currently studying the Bayesian Markowitz problem for portfolio selection under the RSVB framework, as this is one of the important problems where the RSVB framework could be relevant. Moreover, as part of future work, we also aim to extend this work to incorporate models with local latent variables such as mixture models, where VB methods have been empirically demonstrated to outperform sampling-based posterior approximation methods. Additionally, we are also investigating risk-sensitive frameworks comprising other types of divergence measures.
\bibliographystyle{plain} 
\bibliography{refs}

\begin{thebibliography}{10}

\bibitem{Alquier2020}
Pierre Alquier and James Ridgway.
\newblock Concentration of tempered posteriors and of their variational
  approximations.
\newblock {\em Ann. Stat.}, 48(3), Jun 2020.

\bibitem{BaRu2014}
Gah-Yi Ban and Cynthia Rudin.
\newblock The big data newsvendor: Practical insights from machine learning.
\newblock {\em Oper. Res.}, 67(1):90--108, 2018.

\bibitem{banerjee2021pac}
Imon Banerjee, Vinayak~A Rao, and Harsha Honnappa.
\newblock Pac-bayes bounds on variational tempered posteriors for markov
  models.
\newblock {\em Entropy}, 23(3):313, 2021.

\bibitem{BaJoMc2006}
Peter~L Bartlett, Michael~I Jordan, and Jon~D McAuliffe.
\newblock Convexity, classification, and risk bounds.
\newblock {\em J. Am. Stat. Assoc.}, 101(473):138--156, 2006.

\bibitem{Bauder2020}
David Bauder, Taras Bodnar, Nestor Parolya, and Wolfgang Schmid.
\newblock Bayesian mean{\textendash}variance analysis: optimal portfolio
  selection under parameter uncertainty.
\newblock {\em Quant. Financ.}, 21(2):221--242, May 2020.

\bibitem{BeKa2014}
Dimitris Bertsimas and Nathan Kallus.
\newblock From predictive to prescriptive analytics.
\newblock {\em arXiv preprint arXiv:1402.5481}, 2014.

\bibitem{BeKa2016}
Dimitris Bertsimas, Nathan Kallus, and Amjad Hussain.
\newblock Inventory management in the era of big data.
\newblock {\em Prod. Oper. Manage.}, 25(12):2006--2009, 2016.

\bibitem{bertsimas2018optimization}
Dimitris Bertsimas and Christopher McCord.
\newblock Optimization over continuous and multi-dimensional decisions with
  observational data.
\newblock In {\em Adv. Neural Inf. Process. Syst.}, pages 2966--2974, 2018.

\bibitem{bertsimas2005data}
Dimitris Bertsimas and Aur{\'e}lie Thiele.
\newblock A data-driven approach to newsvendor problems.
\newblock {\em Working Paper, Massachusetts Institute of Technology}, 2005.

\bibitem{Bl2017}
David~M. Blei, Alp Kucukelbir, and Jon~D. McAuliffe.
\newblock Variational inference: A review for statisticians.
\newblock {\em J. Am. Stat. Assoc.}, 112(518):859--877, Feb 2017.

\bibitem{Bodnar2017}
Taras Bodnar, Stepan Mazur, and Yarema Okhrin.
\newblock Bayesian estimation of the global minimum variance portfolio.
\newblock {\em Eur. J. Oper. Res.}, 256(1):292--307, Jan 2017.

\bibitem{Lugosi}
St{\'e}phane Boucheron, G{\'a}bor Lugosi, and Pascal Massart.
\newblock {\em Concentration inequalities: A nonasymptotic theory of
  independence}.
\newblock Oxford university press, 2013.

\bibitem{Braides2002}
Andrea Braides.
\newblock {\em Gamma-Convergence for Beginners}.
\newblock Oxford University Press, Jul 2002.

\bibitem{ChriefAbdellatif2018}
Badr-Eddine Ch{\'{e}}rief-Abdellatif and Pierre Alquier.
\newblock Consistency of variational bayes inference for estimation and model
  selection in mixtures.
\newblock {\em Electron. J. Stat.}, 12(2), Jan 2018.

\bibitem{Chick2006}
Stephen~E. Chick.
\newblock Chapter 9 subjective probability and bayesian methodology.
\newblock In {\em Simulation}, pages 225--257. Elsevier, 2006.

\bibitem{Chu2008}
Leon~Yang Chu, J.George Shanthikumar, and Zuo-Jun~Max Shen.
\newblock Solving operational statistics via a bayesian analysis.
\newblock {\em Oper. Res. Lett.}, 36(1):110--116, Jan 2008.

\bibitem{deng2018coalescing}
Yunxiao Deng, Junyi Liu, and Suvrajeet Sen.
\newblock Coalescing data and decision sciences for analytics.
\newblock In {\em Recent Advances in Optimization and Modeling of Contemporary
  Problems}, pages 20--49. INFORMS, 2018.

\bibitem{donsker1975asymptoticb}
MD~Donsker and SRS Varadhan.
\newblock Asymptotic evaluation of certain markov process expectations for
  large time, ii.
\newblock {\em Commun. Pur. Appl. Math.}, 28(2):279--301, 1975.

\bibitem{donsker1976asymptotic}
MD~Donsker and SRS Varadhan.
\newblock Asymptotic evaluation of certain markov process expectations for
  large time---iii.
\newblock {\em Commun. Pur. Appl. Math.}, 29(4):389--461, 1976.

\bibitem{donsker1975asymptotic}
Monroe~D Donsker and SR~Srinivasa Varadhan.
\newblock Asymptotic evaluation of certain markov process expectations for
  large time, i.
\newblock {\em Commun. Pur. Appl. Math.}, 28(1):1--47, 1975.

\bibitem{donsker1983asymptotic}
Monroe~D Donsker and SR~Srinivasa Varadhan.
\newblock Asymptotic evaluation of certain markov process expectations for
  large time. iv.
\newblock {\em Commun. Pur. Appl. Math.}, 36(2):183--212, 1983.

\bibitem{elmachtoub2017smart}
Adam~N Elmachtoub and Paul Grigas.
\newblock Smart" predict, then optimize".
\newblock {\em arXiv preprint arXiv:1710.08005}, 2017.

\bibitem{follmer2011entropic}
Hans F{\"o}llmer and Thomas Knispel.
\newblock Entropic risk measures: Coherence vs. convexity, model ambiguity and
  robust large deviations.
\newblock {\em Stoch. Dynam.}, 11(02n03):333--351, 2011.

\bibitem{GGV}
Subhashis Ghosal, Jayanta~K. Ghosh, and Aad~W. van~der Vaart.
\newblock Convergence rates of posterior distributions.
\newblock {\em Ann. Stat.}, 28(2):500--531, 2000.

\bibitem{gibbs2002choosing}
Alison~L Gibbs and Francis~Edward Su.
\newblock On choosing and bounding probability metrics.
\newblock {\em Int. Stat. Rev.}, 70(3):419--435, 2002.

\bibitem{gil2013renyi}
Manuel Gil, Fady Alajaji, and Tamas Linder.
\newblock R{\'e}nyi divergence measures for commonly used univariate continuous
  distributions.
\newblock {\em Inform. Sciences}, 249:124--131, 2013.

\bibitem{homem2014monte}
Tito Homem-de Mello and G{\"u}zin Bayraksan.
\newblock Monte carlo sampling-based methods for stochastic optimization.
\newblock {\em Surv. Oper. Res. Manage. Sci.}, 19(1):56--85, 2014.

\bibitem{JaHoRa2019b}
Prateek Jaiswal, Harsha Honnappa, and Vinayak~A. Rao.
\newblock Asymptotic consistency of loss-calibrated variational bayes.
\newblock {\em Stat}, 9(1), Jan 2020.

\bibitem{jaiswal2019asymptotic}
Prateek Jaiswal, Vinayak Rao, and Harsha Honnappa.
\newblock Asymptotic consistency of $\alpha$-r{\'e}nyi-approximate posteriors.
\newblock {\em J Mach. Learn. Res.}, 21(156):1--42, 2020.

\bibitem{knoblauch2019frequentist}
Jeremias Knoblauch.
\newblock Frequentist consistency of generalized variational inference.
\newblock {\em arXiv preprint arXiv:1912.04946}, 2019.

\bibitem{kusmierczyk2019variational}
Tomasz Ku{\'s}mierczyk, Joseph Sakaya, and Arto Klami.
\newblock Variational bayesian decision-making for continuous utilities.
\newblock In {\em Adv. Neur. In.}, pages 6395--6405, 2019.

\bibitem{LaSiGh2011}
Simon Lacoste-Julien, Ferenc Husz{\'a}r, and Zoubin Ghahramani.
\newblock Approximate inference for the loss-calibrated bayesian.
\newblock In {\em Int. Conf. Artif. Intell. Stat.}, pages 416--424, 2011.

\bibitem{Lai2011}
Tze~Leung Lai, Haipeng Xing, and Zehao Chen.
\newblock Mean{\textendash}variance portfolio optimization when means and
  covariances are unknown.
\newblock {\em Ann. Appl. Stat.}, 5(2A), Jun 2011.

\bibitem{levi2015data}
Retsef Levi, Georgia Perakis, and Joline Uichanco.
\newblock The data-driven newsvendor problem: new bounds and insights.
\newblock {\em Oper. Res.}, 63(6):1294--1306, 2015.

\bibitem{LiTu2016}
Yingzhen Li and Richard~E Turner.
\newblock R{\'e}nyi divergence variational inference.
\newblock In {\em Adv. Neur. In.}, pages 1073--1081, 2016.

\bibitem{LiSh2005}
Liwan~H Liyanage and J~George Shanthikumar.
\newblock A practical inventory control policy using operational statistics.
\newblock {\em Oper. Res. Lett.}, 33(4):341--348, 2005.

\bibitem{LuShSh2015}
Mengshi Lu, J~George Shanthikumar, and Zuo-Jun~Max Shen.
\newblock Technical note--operational statistics: Properties and the
  risk-averse case.
\newblock {\em Nav. Res. Log.}, 62(3):206--214, 2015.

\bibitem{Pati18}
Debdeep Pati, Anirban Bhattacharya, and Yun Yang.
\newblock On statistical optimality of variational bayes.
\newblock In Amos Storkey and Fernando Perez-Cruz, editors, {\em Proceedings of
  the Twenty-First International Conference on Artificial Intelligence and
  Statistics}, volume~84 of {\em Proc. of Mach. Learn. Res.}, pages 1579--1588.
  PMLR, 09--11 Apr 2018.

\bibitem{Pflugg2003}
G.Ch. Pflug.
\newblock Stochastic optimization and statistical inference.
\newblock In {\em Stochastic Programming}, volume~10 of {\em Handbooks in
  Operations Research and Management Science}, pages 427 -- 482. Elsevier,
  2003.

\bibitem{quang2019}
Minh~Ha Quang.
\newblock Regularized divergences between covariance operators and gaussian
  measures on hilbert spaces, 2019.

\bibitem{rockafellar2007coherent}
R~Tyrrell Rockafellar.
\newblock Coherent approaches to risk in optimization under uncertainty.
\newblock In {\em OR Tools and Applications: Glimpses of Future Technologies},
  pages 38--61. Informs, 2007.

\bibitem{Sc1959b}
Herbert Scarf.
\newblock Bayes solutions of the statistical inventory problem.
\newblock {\em Ann. Math. Stat.}, 30(2):490--508, 1959.

\bibitem{Sc1960}
Herbert~E Scarf.
\newblock Some remarks on bayes solutions to the inventory problem.
\newblock {\em Nav. Res. Log.}, 7(4):591--596, 1960.

\bibitem{Schwartz1965}
Lorraine Schwartz.
\newblock On bayes procedures.
\newblock {\em Z. Wahrscheinlichkeit.}, 4(1):10--26, Mar 1965.

\bibitem{Stuart2010}
A.~M. Stuart.
\newblock Inverse problems: A bayesian perspective.
\newblock {\em Acta Numer.}, 19:451--559, May 2010.

\bibitem{TaChKoGu2005}
Ben Taskar, Vassil Chatalbashev, Daphne Koller, and Carlos Guestrin.
\newblock Learning structured prediction models: A large margin approach.
\newblock In {\em Proceedings of the 22nd international conference on Machine
  learning}, pages 896--903. ACM, 2005.

\bibitem{TuSh2011a}
R.~E. Turner and M.~Sahani.
\newblock {\em Two problems with variational expectation maximisation for
  time-series models}.
\newblock Cambridge University Press, 2011.

\bibitem{Zanten08}
A.~W. van~der Vaart and J.~H. van Zanten.
\newblock {Rates of contraction of posterior distributions based on Gaussian
  process priors}.
\newblock {\em Ann. Stat.}, 36(3):1435 -- 1463, 2008.

\bibitem{van2000asymptotic}
Aad~W Van~der Vaart.
\newblock {\em Asymptotic statistics}, volume~3.
\newblock Cambridge university press, 2000.

\bibitem{WaBl2017}
Yixin Wang and David~M. Blei.
\newblock Frequentist consistency of variational bayes.
\newblock {\em J. Am. Stat. Assoc.}, pages 1--15, Jun 2018.

\bibitem{wilder2018melding}
Bryan Wilder, Bistra Dilkina, and Milind Tambe.
\newblock Melding the data-decisions pipeline: Decision-focused learning for
  combinatorial optimization.
\newblock {\em arXiv preprint arXiv:1809.05504}, 2018.

\bibitem{wu2018bayesian}
Di~Wu, Helin Zhu, and Enlu Zhou.
\newblock A bayesian risk approach to data-driven stochastic optimization:
  Formulations and asymptotics.
\newblock {\em SIAM J. Optimiz.}, 28(2):1588--1612, 2018.

\bibitem{Yang2020}
Yun Yang, Debdeep Pati, and Anirban Bhattacharya.
\newblock {$\alpha $-variational inference with statistical guarantees}.
\newblock {\em Ann. Stat.}, 48(2):886 -- 905, 2020.

\bibitem{Zhang2018}
Fengshuo Zhang and Chao Gao.
\newblock Convergence rates of variational posterior distributions.
\newblock {\em Ann. Stat.}, 48(4), Aug 2020.

\bibitem{zhou2017simulation}
Enlu Zhou and Di~Wu.
\newblock Simulation optimization under input model uncertainty.
\newblock In {\em Advances in Modeling and Simulation}, pages 219--247.
  Springer, 2017.

\end{thebibliography}

\appendix

\newpage
\section{Proofs}

\subsection{Alternative derivation of LCVB}
We present the alternative derivation of LCVB. Consider the logarithm of the Bayes posterior risk,
	\begin{align}
	\nonumber
	\log \mathbb{E}_{\Pi(\theta|\nX)}[\exp(R(a,\theta))]&= \log \int_{\Theta}
	\exp(R(a,\theta))
	d\Pi(\theta|\nX)\\
	\nonumber
	&=  \log \int_{\Theta} \frac{dQ(\theta)}{dQ(\theta)} \exp(R(a,\theta))
	d\Pi(\theta| \nX) \\
	\label{eq:6}
	&\geq - \int_{\Theta} dQ(\theta)\log \frac{dQ(\theta)}{\exp(R(a,\theta))
		d\Pi(\theta| \nX)} =:  \mathcal{F}(a;Q(\cdot), \nX)
	\end{align}
where the inequality follows from an application of Jensen's inequality (since, without loss of generality, $\exp(R(a,\theta)) > 0$ for all $a \in \sA$ and $\theta \in \Theta$), and $Q \in \sQ$. Then, it follows that
\begin{align}
	\nonumber
	\min_{a \in \sA} \log \mathbb{E}_{\Pi(\theta|\nX)}[\exp(R(a,\theta))]
	&\geq  \underset{a \in \sA}{\min}~\underset{q \in \mathcal{Q}}{\max}\
	\mathcal{F}(a;Q(\theta),\nX)\\
	\label{eq:reg-form2}
	&= \underset{a\in\sA}{\min}~\underset{q \in \mathcal{Q}}{\max}
	- \scKL\left(Q(\theta)|| \Pi(\theta|\nX)\right) + \int_{\Theta} 
	R(a,\theta) dQ(\theta).
	\end{align}

\subsection{Proof of Theorem~\ref{thrm:thm1}:}
We prove our main result after series of important lemmas. For brevity we denote $\mathcal{LR}_n(\theta,\theta_0)=\frac{p(\nX|\theta)}{p(\nX|\theta_0)}$.
\begin{lemma}\label{lem:lem1}
	For any $a' \in \sA$, $\gamma>0$, and $\zeta>0$,
	\begin{align}
		\nonumber
		\bbE_{P^{n}_0}& \left[ \zeta  \int_{\Theta}  L_n(\theta,\theta_0)\ d\qrs \right]  
		\\
		\nonumber
		\leq & \log  \bbE_{P^{n}_0} \left[ \int_{\Theta}  e^{  \zeta  L_n(\theta,\theta_0)} \frac {e^{\gamma R(a',\theta)}  \ \mathcal{LR}_n(\theta,\theta_0) d\Pi(\theta) } {\int_{\Theta} e^{\gamma R(a',\theta)}  \ \mathcal{LR}_n(\theta,\theta_0) d\Pi(\theta) }   \right] +  \inf_{Q \in \sQ} \bbE_{P^{n}_0} \bigg[  \scKL(Q(\theta)\|\Pi(\theta|\nX))
		\\
		& - \gamma \inf_{a \in  \mathcal{A}} \bbE_Q[R(a,\theta)] \bigg]  +\log  \bbE_{P^{n}_0} \left[ \int_{\Theta} e^{\gamma R(a',\theta)} \ \frac{\mathcal{LR}_n(\theta,\theta_0) d\Pi(\theta)}{\int_{\Theta} \mathcal{LR}_n(\theta,\theta_0) d\Pi(\theta)}  \right] .
		\label{eq:eqf2} 
	\end{align} 
\end{lemma}
\begin{proof}
For any fixed $a' \in \sA, \gamma > 0$, and $\zeta>0$,
and using the fact that $\scKL$ is non-negative, observe that the  integral in the LHS of equation~\eqref{eq:eqf2} satisfies, 	
\begin{align}
	\nonumber
	\zeta \bbE_{\qrs}\left[  L_n(\theta,\theta_0) \right]  & \leq  \bbE_{\qrs} \left[ \log e^{ \zeta L_n(\theta,\theta_0)} \right] \ 
	\\
	\nonumber
	& \quad + \scKL\left( d\qrs \bigg \|\frac{e^{\zeta L_n(\theta,\theta_0)} e^{\gamma R(a',\theta)}  \ d\Pi(\theta|\nX)}{\int_{\Theta}  e^{\zeta L_n(\theta,\theta_0)} e^{\gamma R(a',\theta)}  \ d \Pi(\theta|\nX) } \right)
	\\
	\nonumber
	=  \bbE_{\qrs}&\left[  \log  e^{ \zeta L_n(\theta,\theta_0)} \right] \ + \log \bbE_{\Pi_n}\left[  e^{ \zeta L_n(\theta,\theta_0)} e^{\gamma R(a',\theta)} \right]
	\\
	\nonumber 
	& +  \bbE_{\qrs}\left[  \log \frac{d\qrs}{ e^{\zeta L_n  (\theta,\theta_0)}  e^{\gamma R(a',\theta)} \ d\Pi(\theta|\nX)  }  \right]
	\\
	\nonumber
	=  \log \bbE_{\Pi_n}&\left[  e^{ \zeta L_n  (\theta,\theta_0)} e^{\gamma R(a',\theta)}  \right]  +   \bbE_{\qrs}\left[  \log \frac{d\qrs}{e^{\gamma R(a',\theta)} \ d\Pi(\theta|\nX)  } \right].
\end{align}
Next, using the definition of $\qrs$ in the second term of last equality, for any other $Q(\cdot) \in \sQ$
\begin{align}
	\nonumber
	\zeta \bbE_{\qrs}\left[ L_n(\theta,\theta_0)\right]  \leq \log &\bbE_{\Pi_n}\left[ e^{\zeta L_n  (\theta,\theta_0)} e^{\gamma R(a',\theta)}  \right]  +  \bbE_{Q}\left[  \log \frac{dQ(\theta)} {e^{\gamma R(a',\theta)} \ d\Pi(\theta|\nX)  } \right].
\end{align}
Finally, it follows from the definition of the posterior distribution that
\begin{align} 
	\nonumber
	\zeta &\bbE_{\qrs}\left[ L_n(\theta,\theta_0)\right] 
	\\
	\nonumber 
	&\leq \log \int_{\Theta}   e^{\zeta L_n  (\theta,\theta_0)} e^{\gamma R(a',\theta)}  \ \frac {\mathcal{LR}_n(\theta,\theta_0) d\Pi(\theta) } {\int_{\Theta} \mathcal{LR}_n(\theta,\theta_0) d\Pi(\theta) }   + \bbE_{Q}\left[ \log \frac{dQ(\theta)} {e^{\gamma R(a',\theta)} \ d\Pi(\theta|\nX)  } \right] ,
	\\
	\nonumber
	=&  \log \int_{\Theta}  e^{\zeta L_n  (\theta,\theta_0)}   \ \frac { e^{\gamma R(a',\theta)} \mathcal{LR}_n(\theta,\theta_0) d\Pi(\theta) } {\int_{\Theta} e^{\gamma R(a',\theta)} \mathcal{LR}_n(\theta,\theta_0) d\Pi(\theta) }   +   \bbE_{Q}\left[  \log \frac{dQ(\theta)} {e^{\gamma R(a',\theta)} \ d\Pi(\theta|\nX) } \right]
	\\
	&+ \log \int_{\Theta}  \  e^{\gamma R(a',\theta)}  \frac{\mathcal{LR}_n(\theta,\theta_0) d\Pi(\theta)}{\int_{\Theta} \mathcal{LR}_n(\theta,\theta_0) d\Pi(\theta)}  , 
	\label{eq:eqf1} 
\end{align}
where the last equality follows from adding and subtracting $ \log   \bbE_{\Pi}\left[ e^{\gamma R(a',\theta)}  \mathcal{LR}_n(\theta,\theta_0) \right] $. Now taking expectation on either side of equation~\eqref{eq:eqf1} and using Jensen's inequality on the first and the last term in the RHS yields
\begin{align}
	\nonumber
	\bbE_{P^{n}_0}& \left[ \zeta \bbE_{\qrs}\left[ L_n(\theta,\theta_0)\right]\right]  
	\\
	\nonumber
	\leq  &\log  \bbE_{P^{n}_0} \left[ \int_{\Theta}  e^{\zeta L_n  (\theta,\theta_0)} \frac {e^{\gamma R(a',\theta)}  \ \mathcal{LR}_n(\theta,\theta_0) d\Pi(\theta) } {\int_{\Theta} e^{\gamma R(a',\theta)}  \ \mathcal{LR}_n(\theta,\theta_0) d\Pi(\theta) }   \right] +  \inf_{Q \in \sQ} \bbE_{P^{n}_0} \bigg[  \scKL(Q\|\Pi_n)  
	\\ 
	& - \gamma \inf_{a \in  \mathcal{A}}  \bbE_{Q}\left[ { R(a,\theta)}\right] \bigg] +\log  \bbE_{P^{n}_0} \left[ \int_{\Theta} e^{\gamma R(a',\theta)}  \ \frac{\mathcal{LR}_n(\theta,\theta_0) d\Pi(\theta)}{\int_{\Theta} \mathcal{LR}_n(\theta,\theta_0) d\Pi(\theta)} \right],
\end{align}
where in the second term in RHS of~\eqref{eq:eqf1}, we first take infimum over all $a\in \sA$ which upper  bounds the second term in~\eqref{eq:eqf1} and then take infimum over all $Q \in \sQ$, since the LHS does not depend on $Q$. 
\end{proof}

Next, we state a technical result that is important  in proving our next lemma.
\begin{lemma}[Lemma 6.4 of \cite{Zhang2018}]\label{lem:lem6.4}
	Suppose random variable X satisfies
	\vspace{0em}
	$$\mathbb{P}(X\geq t) \leq c_1 \exp(-c_2 t),$$
	for all $t \geq t_0>0$. Then for any $0< \beta \leq c_2/2 $,
	$$ \bbE[ \exp(\beta X)] \leq \exp(\beta t_0) + c_1.$$
\end{lemma}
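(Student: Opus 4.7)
The plan is to express the moment generating function via the tail integral formula and then split at the threshold $t_0$, using the trivial bound $\mathbb{P}(X \geq t) \leq 1$ below $t_0$ and the given exponential tail bound above it.

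First I would write, for non-negative $Y = \exp(\beta X)$,
\[
\bbE[\exp(\beta X)] = \int_0^\infty \mathbb{P}(\exp(\beta X) \geq s)\,ds.
\]
Using the change of variable $s = \exp(\beta t)$ (valid since $\beta > 0$), this becomes
\[
\bbE[\exp(\beta X)] = \beta \int_{-\infty}^\infty \exp(\beta t)\, \mathbb{P}(X \geq t)\, dt.
\]
Then split the integral at $t_0$:
\[
\bbE[\exp(\beta X)] = \beta \int_{-\infty}^{t_0} \exp(\beta t)\, \mathbb{P}(X \geq t)\, dt + \beta \int_{t_0}^{\infty} \exp(\beta t)\, \mathbb{P}(X \geq t)\, dt.
\]

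For the first integral, bound $\mathbb{P}(X \geq t) \leq 1$ to obtain $\beta \int_{-\infty}^{t_0} \exp(\beta t)\, dt = \exp(\beta t_0)$. For the second, plug in the hypothesis $\mathbb{P}(X \geq t) \leq c_1 \exp(-c_2 t)$, giving
\[
c_1 \beta \int_{t_0}^\infty \exp(-(c_2 - \beta) t)\, dt = \frac{c_1 \beta}{c_2 - \beta}\, \exp(-(c_2 - \beta) t_0).
\]
The condition $\beta \leq c_2/2$ ensures $c_2 - \beta \geq \beta > 0$, which simultaneously makes $\frac{\beta}{c_2 - \beta} \leq 1$ and keeps $\exp(-(c_2 - \beta) t_0) \leq 1$ (since $t_0 > 0$). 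Combining these two facts shows the second integral is bounded by $c_1$, and adding the two pieces yields the claimed bound $\exp(\beta t_0) + c_1$.

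There is essentially no obstacle here; the only subtlety is ensuring the restriction $\beta \leq c_2/2$ is exactly what is needed to make both $\tfrac{\beta}{c_2-\beta} \leq 1$ and the exponential factor $\leq 1$, so that no extraneous constant appears in front of $c_1$. One minor point worth noting in the write-up is that the tail bound is only assumed for $t \geq t_0$, but this matches precisely the range of the second integral, so the split is natural and no extension of the hypothesis is needed.
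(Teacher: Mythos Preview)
Your proof is correct. The paper does not actually prove this lemma but simply cites \cite{Zhang2018}; your argument via the tail-integral representation, splitting at $t_0$, and using $\beta \leq c_2/2$ to control both the ratio $\beta/(c_2-\beta)$ and the exponential factor, is a clean and complete justification.
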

\begin{proof}
    Refer Lemma 6.4 of \cite{Zhang2018}.
    
\end{proof}

In the following result, we bound the first term on the RHS of equation~\eqref{eq:eqf2}. The arguments in the proof are essentially similar to~Lemma 6.3 in~\cite{Zhang2018} 
\begin{lemma}\label{lem:lem2}
	Under Assumptions~\ref{assump:Asf1},~\ref{assump:Asf2},~\ref{assump:Asf3},~\ref{assump:Asf14}, and~\ref{assump:Asf15}  and for $\min(C,C_4(\gamma) + C_5(\gamma)) > C_2 + C_3 + C_4(\gamma) + 2$ and any $\e \geq \e_n$,
	\begin{align}
		\bbE_{P^{n}_0} \left[ \int_{\Theta}  e^{\zeta L_n  (\theta,\theta_0)} \frac {e^{\gamma R(a',\theta)}  \ \mathcal{LR}_n(\theta,\theta_0) d\Pi(\theta)} {\int_{\Theta} e^{\gamma R(a',\theta)}  \ \mathcal{LR}_n(\theta,\theta_0) d\Pi(\theta) }   \right]	\leq e^{\zeta C_{1}n \e^2} + (1+C_0+ 3W^{-\gamma}), \label{eq:eqf6}
	\end{align}
	for $0< \zeta \leq C_{10}/2 $, where $C_{10} = \min \{\lambda,C, 1\}/ C_1$ for any $\lambda>0$.
\end{lemma}
\begin{proof}
First define the set
\begin{align}
	B_n &: = \left\{\nX : \int_{\Theta}  \mathcal{LR}_n(\theta,\theta_0) d\Pi(\theta) \geq e^{-(1+C_3)n\e^2}\Pi(A_n) \right\} \label{def:def1},
\end{align}
where set $A_n$ is  defined in Assumption~\ref{assump:Asf3}.
We demonstrate that, under Assumption~\ref{assump:Asf3}, $P^n_0 \left({B_n^c} \right)$ is bounded above by an exponentially  decreasing(in $n$) term. Note that for $A_n$ as defined in Assumption~\ref{assump:Asf3}:
\begin{align}
	\nonumber
	\mathbb P^n_0&\left( \frac{1}{\Pi(A_n)} \int_{\Theta} \mathcal{LR}_n(\theta,\theta_0) d\Pi(\theta) \leq e^{-(1+C_3) n\e^2}   \right) 
	\\
	& \leq \mathbb P^n_0\left(\frac{1}{\Pi(A_n)} \int_{\Theta \cap A_n} \mathcal{LR}_n(\theta,\theta_0) d\Pi(\theta) \leq e^{-(1+C_3) n\e^2}   \right).  \label{eq:lc1}
\end{align}
Let $d \tilde \Pi(\theta) : = \frac{ \mathbbm{1}_{ \{\Theta \cap A_n \} }(\theta) }{\Pi(A_n)} d\Pi(\theta), $ and use this in~\eqref{eq:lc1} for any $\lambda>0$ to obtain, 
\begin{align*}
	\mathbb P^n_0&\left(  \frac{1}{\Pi(A_n)} \int_{\Theta} \mathcal{LR}_n(\theta,\theta_0) d\Pi(\theta) \leq e^{-(1+C_3) n\e^2} \right) 
	\\
	& \leq \mathbb P^n_0\left(  \int_{\Theta} \mathcal{LR}_n(\theta,\theta_0) d \tilde \Pi(\theta) \leq e^{-(1+C_3) n\e^2}  \right) 
	\\
	&= \mathbb P^n_0 \left( \left[ \int_{\Theta} \mathcal{LR}_n(\theta,\theta_0) d \tilde \Pi(\theta) \right]^{-\lambda} \geq e^{ (1 + C_3) \lambda n\e^2}   \right).
\end{align*}
{Then, using the  Chernoff's inequality in the last equality above, we have  }
\begin{align}
	\nonumber
	\mathbb P^n_0&\left(  \frac{1}{\Pi(A_n)} \int_{\Theta} \mathcal{LR}_n(\theta,\theta_0) d\Pi(\theta) 
	\leq e^{-(1+C_3) n\e^2} \right)
	\\
	\nonumber
	&\leq e^{- (1+C_3) \lambda n\e^2} \bbE_{P^{n}_0} \left( \left[ \int_{\Theta} \mathcal{LR}_n(\theta,\theta_0) d \tilde \Pi(\theta) \right]^{-\lambda}    \right)
	\\
	\nonumber
	&\leq e^{- (1+C_3) \lambda n\e^2}  \left[ \int_{\Theta} \bbE_{P^{n}_0} \left( \left[  \mathcal{LR}_n(\theta,\theta_0) \right]^{-\lambda} \right) d \tilde \Pi(\theta) \right]    
	\\
	\nonumber
	&=  e^{- (1+C_3) \lambda n\e^2}  \left[ \int_{\Theta} \exp( \lambda D_{\lambda+1}\left( P_0^n \| P_{\theta}^n \right) ) d \tilde \Pi(\theta) \right]    
	\\
	&\leq e^{- (1+C_3) \lambda n\e^2} e^{\lambda C_3 n\e_n^2 } \leq \e^{- \lambda n \e^2}, \label{eq:eq_bn} 
\end{align}
where the second inequality follows from first applying Jensen's inequality (on the term inside $[\cdot]$) and then using Fubini's theorem, and the penultimate inequality follows from Assumption~\ref{assump:Asf3} and the definition of $\tilde \Pi(\theta)$. 

Next, define the set $K_n : = \{ \theta \in \Theta: L_n(\theta,\theta_0) > C_1n\e^2  \} $. Notice that set $K_n$ is the set of alternate hypothesis as defined in Assumption~\ref{assump:Asf1}. We bound the calibrated posterior probability of this set $K_n$ to get a bound on the first term in the RHS of equation~\eqref{eq:eqf2}. Recall the sequence of test function $\{\phi_{n,\e}\}$ from Assumption~\ref{assump:Asf1}. Observe that
\begin{align}
	\nonumber
	\bbE_{P^{n}_0} &\left[  \frac {\int_{K_n} e^{\gamma R(a',\theta)}  \ \mathcal{LR}_n(\theta,\theta_0) d\Pi(\theta) } {\int_{\Theta} e^{\gamma R(a',\theta)}  \ \mathcal{LR}_n(\theta,\theta_0) d\Pi(\theta) }  \right] 
	\\
	\nonumber
	&= \bbE_{P^{n}_0} \left[ (\phi_{n,\e} + 1- \phi_{n,\e} )  \frac {\int_{K_n} e^{\gamma R(a',\theta)}  \ \mathcal{LR}_n(\theta,\theta_0) d\Pi(\theta) } {\int_{\Theta} e^{\gamma R(a',\theta)}  \ \mathcal{LR}_n(\theta,\theta_0) d\Pi(\theta) }  \right] 
	\\
	\nonumber
	& \leq \bbE_{P^{n}_0} [ \phi_{n,\e} ]
	+ \bbE_{P^{n}_0} \left[ (1 - \phi_{n,\e}) \mathbbm{1}_{B_n^C}  \right]
	\\
	\nonumber
	& + \bbE_{P^{n}_0} \left[ (1 - \phi_{n,\e}) \mathbbm{1}_{B_n} \frac {\int_{K_n} e^{\gamma R(a',\theta)}  \ \mathcal{LR}_n(\theta,\theta_0) d\Pi(\theta) } {\int_{\Theta} e^{\gamma R(a',\theta)}  \ \mathcal{LR}_n(\theta,\theta_0) d\Pi(\theta) }  \right]
	\\
	& \leq \bbE_{P^{n}_0}  \phi_{n,\e} 
	+ \bbE_{P^{n}_0} \left[ \mathbbm{1}_{B_n^C}  \right] 
	+ \bbE_{P^{n}_0} \left[ (1 - \phi_{n,\e}) \mathbbm{1}_{B_n} \frac {\int_{K_n} e^{\gamma R(a',\theta)}  \ \mathcal{LR}_n(\theta,\theta_0) d\Pi(\theta) } {\int_{\Theta} e^{\gamma R(a',\theta)}  \ \mathcal{LR}_n(\theta,\theta_0) d\Pi(\theta) }  \right],
	\label{eq:f3}
\end{align} 
where in the second inequality, we first divide the second term over set $B_n$ and its complement, and then use the fact that $  \frac {\int_{K_n} e^{\gamma R(a',\theta)}  \ \mathcal{LR}_n(\theta,\theta_0) d\Pi(\theta) } {\int_{\Theta} e^{\gamma R(a',\theta)}  \ \mathcal{LR}_n(\theta,\theta_0) d\Pi(\theta) }  \leq 1$. The third inequality is due the fact that $\phi_{n,\e} \in[0,1]$.
Next, using Assumption~\ref{assump:Asf3} and~\ref{assump:Asf15} observe that on set $B_n$ 
\begin{align*}
	\int_{\Theta} e^{\gamma R(a',\theta)}  \ \mathcal{LR}_n(\theta,\theta_0) d\Pi(\theta)  & \geq W^{\gamma} \int_{\Theta}  \ \mathcal{LR}_n(\theta,\theta_0) d\Pi(\theta)
	\\
	& \geq W^{\gamma} e^{-(1+ C_2 +C_3 )n\e_n^2} \geq W^{\gamma} e^{-(1+ C_2 +C_3 )n\e^2}.
\end{align*}
Substituting  the equation above in the third term of equation~\eqref{eq:f3}, we obtain
\begin{align}
	\nonumber
	&	\bbE_{P^{n}_0} \left[  (1  - \phi_{n,\e}) \mathbbm{1}_{B_n}  \frac {\int_{K_n} e^{\gamma R(a',\theta)}  \ \mathcal{LR}_n(\theta,\theta_0) d\Pi(\theta) } {\int_{\Theta} e^{\gamma R(a',\theta)}  \ \mathcal{LR}_n(\theta,\theta_0) d\Pi(\theta) }  \right] 
	\\
	\nonumber
	& \leq  W^{-{\gamma}} e^{(1+C_2 + C_3 )n\e^2} \bbE_{P^{n}_0} \left[ (1 - \phi_{n,\e}) \mathbbm{1}_{B_n} {\int_{K_n} e^{\gamma R(a',\theta)}  \ \mathcal{LR}_n(\theta,\theta_0) d\Pi(\theta) }  \right]
	\\
	& \leq  W^{-{\gamma}} e^{(1+C_2 + C_3 )n\e^2} \bbE_{P^{n}_0} \left[ (1 - \phi_{n,\e}) {\int_{K_n} e^{\gamma R(a',\theta)}  \ \mathcal{LR}_n(\theta,\theta_0) d\Pi(\theta) }  \right].
	\tag{$\star$}
\end{align} 
Now using Fubini's theorem observe that,
\begin{align*}
	(\star) & =  W^{-{\gamma}} e^{(1+C_2 + C_3 )n\e^2} {\int_{K_n} e^{\gamma R(a',\theta)}  \bbE_{P_\theta^n} \left[ (1 - \phi_{n,\e})  \right] d\Pi(\theta) } 
	\\
	\nonumber
	& \leq W^{-{\gamma}} e^{(1+C_2 + C_3 + C_4({\gamma}))n\e^2} \bigg[ {\int_{K_n \cap \{e^{\gamma R(a',\theta)} \leq e^{C_4({\gamma}) n\e^2} \} }  \bbE_{P_\theta^n} \left[ (1 - \phi_{n,\e})  \right] d\Pi(\theta) }
	\\
	\nonumber
	& \quad \quad \quad \quad \quad \quad \quad \quad + e^{-C_4({\gamma}) n\e^2} {\int_{K_n \cap \{e^{\gamma R(a',\theta)} > e^{C_4({\gamma}) n\e^2} \} } e^{\gamma R(a',\theta)}   d\Pi(\theta) } \bigg],
\end{align*}
where in the last inequality, we first divide the integral over set $\{\theta \in \Theta : e^{\gamma R(a',\theta)} \leq e^{C_4({\gamma})n\e^2} \}$ and its complement and then use the upper bound on $e^{\gamma R(a',\theta)}$ in the first integral. 
Now, it follows that 
\begin{align*}
	(\star) & \leq W^{-\gamma} e^{(1+C_2 + C_3 + C_4({\gamma}) )n\e^2} \Bigg[ {\int_{K_n }  \bbE_{P_\theta^n} \left[ (1 - \phi_{n,\e})  \right] d\Pi(\theta) } 
	\\
	+& e^{-C_4({\gamma}) n\e^2} {\int_{\{e^{\gamma R(a',\theta)} > e^{C_4({\gamma}) n\e^2} \} } e^{\gamma R(a',\theta)} d\Pi(\theta) } \Bigg]
	\\
	&= W^{-\gamma} e^{(1+C_2 + C_3 + C_4({\gamma}) )n\e^2} \bigg[ {\int_{K_n \cap \Theta_n(\e) }  \bbE_{P_\theta^n} \left[ (1 - \phi_{n,\e})  \right] d\Pi(\theta) }  
	\\  +& {\int_{K_n \cap \Theta_n(\e)^c }  \bbE_{P_\theta^n} \left[ (1 - \phi_{n,\e})  \right] d\Pi(\theta) }  +  e^{-C_4({\gamma}) n\e^2} {\int_{\{e^{\gamma R(a',\theta)} > e^{C_4({\gamma}) n\e^2} \} } e^{\gamma R(a',\theta)} d\Pi(\theta) } \bigg]
	\\
	& \leq W^{-\gamma} e^{(1+C_2 + C_3 + C_4({\gamma}) )n\e^2} \bigg[ {\int_{K_n \cap \Theta_n(\e) }  \bbE_{P_\theta^n} \left[ (1 - \phi_{n,\e})  \right] d\Pi(\theta) } + \Pi(\Theta_n(\e)^c) 
	\\ & \quad \quad \quad \quad \quad \quad \quad \quad +  e^{-C_4(\gamma)n\e^2} {\int_{\{e^{\gamma R(a',\theta)} > e^{C_4(\gamma)n\e^2} \} } e^{\gamma R(a',\theta)} d\Pi(\theta) } \bigg],
\end{align*}
where the second equality is obtained by  dividing  the first integral on set $\Theta_n(\e)$  and its complement, and the second inequality is due the fact that $\phi_{n,\e} \in[0,1]$. 
Now, using the equation above and Assumption~\ref{assump:Asf1},~\ref{assump:Asf2}, and~\ref{assump:Asf14} observe that 
\begin{align*}
	\bbE_{P^{n}_0}& \left[  (1  - \phi_{n,\e}) \mathbbm{1}_{B_n}  \frac {\int_{K_n} e^{\gamma R(a',\theta)}  \ \mathcal{LR}_n(\theta,\theta_0) d\Pi(\theta) } {\int_{\Theta} e^{\gamma R(a',\theta)}  \ \mathcal{LR}_n(\theta,\theta_0) d\Pi(\theta) }  \right] 
	\\
	&\leq W^{-\gamma} e^{(1+C_2 + C_3 + C_4(\gamma))n\e^2} \left[2 e^{-Cn \e^2} + e^{-(C_5(\gamma)+C_4(\gamma))n \e^2} \right].
\end{align*}
Hence, choosing $C,C_2,C_3, C_4(\gamma)$ and $C_5(\gamma)$ such that $ - 1 >  1 + C_2 + C_3 +C_4(\gamma) - \min(C,(C_4(\gamma) + C_5(\gamma)))$ implies

\begin{align}
	\bbE_{P^{n}_0} \left[  (1  - \phi_{n,\e}) \mathbb{I}_{B_n}  \frac {\int_{K_n} e^{\gamma R(a',\theta)}  \ \mathcal{LR}_n(\theta,\theta_0) d\Pi(\theta) } {\int_{\Theta} e^{\gamma R(a',\theta)}  \ \mathcal{LR}_n(\theta,\theta_0) d\Pi(\theta) }  \right] \leq 3 W^{-\gamma} e^{-n\e^2}. \label{eq:eqf4}
\end{align}
By Assumption~\ref{assump:Asf1}, we have
\vspace{0em}
\begin{align}
	\bbE_{P^{n}_0}  \phi_{n,\e} & \leq C_0e^{-C n \e^2}. \label{eq:eq_1b} 
\end{align}
Therefore, substituting equation~\eqref{eq:eq_bn}, equation~\eqref{eq:eqf4}, and~\eqref{eq:eq_1b} into~\eqref{eq:f3}, we obtain
\begin{align}
	\bbE_{P^{n}_0} \left[  \frac {\int_{K_n} e^{\gamma R(a',\theta)}  \ \mathcal{LR}_n(\theta,\theta_0) d\Pi(\theta) } {\int_{\Theta} e^{\gamma R(a',\theta)}  \ \mathcal{LR}_n(\theta,\theta_0) d\Pi(\theta) }  \right]  \leq (1+C_0+ 3W^{-\gamma}) e^{-C_{10} C_1 n \e^2}, \label{eq:eqf5}
\end{align}
where $C_{10} = \min \{\lambda,C, 1\}/ C_1$.
Using Fubini's theorem, observe that the LHS in the equation~\eqref{eq:eqf5} can be expressed as $\mu(K_n)$, where 
$$d\mu(\theta) = \bbE_{P^{n}_0} \left[\frac {   \ \mathcal{LR}_n(\theta,\theta_0)  } {\int_{\Theta} e^{\gamma R(a',\theta)}  \ \mathcal{LR}_n(\theta,\theta_0) d\Pi(\theta) } \right] \Pi(\theta) e^{\gamma R(a',\theta)} d\theta.$$
Next, recall that the set $K_n = \{ \theta \in \Theta: L_n(\theta,\theta_0) > C_1n\e^2  \}$. Applying Lemma~\ref{lem:lem6.4} above with $X=L_n(\theta,\theta_0)$, $c_1 = (1+C_0+ 3W^{-\gamma})$ , $c_2= C_{10}$ , $t_0 = C_1 n \e_n^2$,  and for $0< \zeta \leq C_{10}/2 $, we obtain
\begin{align}
	\bbE_{P^{n}_0} \left[ \int_{\Theta}  e^{\zeta L_n  (\theta,\theta_0)} \frac {e^{\gamma R(a',\theta)}  \ \mathcal{LR}_n(\theta,\theta_0) \Pi(\theta)} {\int_{\Theta} e^{\gamma R(a',\theta)}  \ \mathcal{LR}_n(\theta,\theta_0) d\Pi(\theta) }  d\theta \right]	\leq e^{\zeta C_{1}n \e_n^2} + (1+C_0+ 3W^{-\gamma}). 
\end{align}

\end{proof}
Further, we have another technical lemma, that will be crucial in proving the subsequent lemma that upper bounds the last term in the equation~\eqref{eq:eqf2}.
\begin{lemma}\label{lem:lem6.5}
	Suppose a positive random variable X satisfies
	\vspace{0em}
	$$\mathbb{P}(X\geq e^t) \leq c_1 \exp(-(c_2+1) t),$$
	for all $t \geq t_0>0$, $c_1>0 $, and $c_2>0$. Then,
	\vspace{0em}
	$$ \bbE[ X] \leq \exp(t_0) + \frac{c_1}{c_2}.$$
\end{lemma}
\begin{proof}
For any $Z_0 > 1$,
\vspace{0em}
\begin{align*}
	\bbE[ X] &\leq Z_0 + \int_{Z_0}^{\infty} \mathbb{P} (X \geq  x) dx 
	\\
	&= Z_0+ \int_{\ln Z_0}^{\infty} \mathbb{P} (X \geq  e^y) e^y dy  \leq Z_0 + c_1 \int_{\ln Z_0}^{\infty} \exp(-c_2y) dy.
\end{align*}
Therefore, choosing $Z_0 = \exp(t_0)$,
\vspace{0em}
\begin{align*}
	\bbE[ X] \leq \exp(t_0) + \frac{c_1}{c_2} \exp(-c_2 t_0) \leq \exp(t_0) + \frac{c_1}{c_2}.
\end{align*}

\end{proof}
Next, we establish the following bound on the last term in equation~\eqref{eq:eqf2}.
\begin{lemma}
	Under Assumptions~\ref{assump:Asf1},~\ref{assump:Asf2},~\ref{assump:Asf3},~\ref{assump:Asf14},~\ref{assump:Asf15},  and for $  C_5(\gamma) > C_2 + C_3 + 2$,
	\begin{align}
		\bbE_{P^{n}_0} \left[ \int_{\Theta}  \frac { e^{\gamma R(a',\theta)}  \ \mathcal{LR}_n(\theta,\theta_0) d\Pi(\theta)} {\int_{\Theta} \ \mathcal{LR}_n(\theta,\theta_0) d\Pi(\theta) }   \right]	\leq e^{C_{4}(\gamma)n \e_n^2} + 2C_4(\gamma).
		\label{eq:eqf12}
	\end{align}
	for any $\lambda\geq 1+C_4(\gamma)$.
\end{lemma}
\begin{proof}
Define the  set 
\begin{align}
	M_n &: = \{ \theta \in \Theta : e^{\gamma R(a',\theta)} > e^{C_4(\gamma) n\e^2}  \}. 
\end{align}
Using the set $B_n$ in equation~\eqref{def:def1}, observe that the measure of the set $M_n$, under the posterior distribution satisfies, 
\begin{align}
	\bbE_{P^{n}_0} \left[  \frac {\int_{M_n} \ \mathcal{LR}_n(\theta,\theta_0) d\Pi(\theta) } {\int_{\Theta}  \ \mathcal{LR}_n(\theta,\theta_0) d\Pi(\theta) }  \right] & \leq \bbE_{P^{n}_0} \left[ \mathbbm{1}_{B_n^c} \right]
	+ \bbE_{P^{n}_0} \left[ \mathbbm{1}_{B_n} \frac {\int_{M_n}  \ \mathcal{LR}_n(\theta,\theta_0) d\Pi(\theta) } {\int_{\Theta}  \ \mathcal{LR}_n(\theta,\theta_0) d\Pi(\theta) }  \right].
	\label{eq:f9}
\end{align}
Now, the second term of equation~\eqref{eq:f9} can be  bounded as follows: recall Assumption~\ref{assump:Asf3} and the definition of set $B_n$, both together imply that,
\begin{align*}
	\bbE_{P^{n}_0} \left[  \mathbbm{1}_{B_n}  \frac {\int_{M_n}  \ \mathcal{LR}_n(\theta,\theta_0) d\Pi(\theta) } {\int_{\Theta}   \ \mathcal{LR}_n(\theta,\theta_0) d\Pi(\theta) }  \right] & \leq  e^{(1+C_2 + C_3 )n\e^2} \bbE_{P^{n}_0} \left[  \mathbbm{1}_{B_n} {\int_{M_n} \ \mathcal{LR}_n(\theta,\theta_0) d\Pi(\theta) }  \right]
	\\
	& \leq e^{(1+C_2 + C_3 )n\e^2} \bbE_{P^{n}_0} \left[  {\int_{M_n}  \ \mathcal{LR}_n(\theta,\theta_0) d\Pi(\theta) }  \right].
	\tag{$\star \star$}
\end{align*}
Then, using Fubini's Theorem $(\star \star ) =   e^{(1+C_2 + C_3 )n\e^2} \Pi(M_n)$. Next, using the definition of set $M_n$ and  then Assumption~\ref{assump:Asf14}, we obtain
\begin{align*}
	\bbE_{P^{n}_0} \left[  \mathbbm{1}_{B_n}  \frac {\int_{M_n}  \ \mathcal{LR}_n(\theta,\theta_0) d\Pi(\theta) } {\int_{\Theta}   \ \mathcal{LR}_n(\theta,\theta_0) d\Pi(\theta) }  \right] & \leq e^{(1+C_2 + C_3 )n\e^2} e^{-C_4(\gamma)n\e^2} \int_{M_n} e^{\gamma R(a',\theta)}d\Pi(\theta)
    \\
	& \leq e^{(1+C_2 + C_3 )n\e^2} e^{-C_4(\gamma)n\e^2} e^{-C_5(\gamma)n\e^2},
\end{align*}
Hence, choosing the constants $C_2,C_3, C_4(\gamma)$ and $C_5(\gamma)$ such that  $ - 1 >  1 + C_2 + C_3  - C_5(\gamma)$ implies
\begin{align}
	\bbE_{P^{n}_0} \left[ \mathbbm{1}_{B_n}  \frac {\int_{M_n}  \ \mathcal{LR}_n(\theta,\theta_0) d\Pi(\theta) } {\int_{\Theta}  \ \mathcal{LR}_n(\theta,\theta_0) d\Pi(\theta) }  \right] \leq e^{-(1+C_4(\gamma))n\e^2} \label{eq:eqf10}
\end{align}
Therefore, substituting~\eqref{eq:eq_bn} and~\eqref{eq:eqf10} into~\eqref{eq:f9}
\begin{align}
	\bbE_{P^{n}_0} \left[  \frac {\int_{M_n}  \ \mathcal{LR}_n(\theta,\theta_0) d\Pi(\theta) } {\int_{\Theta}  \ \mathcal{LR}_n(\theta,\theta_0) d\Pi(\theta) }  \right]  \leq 2 e^{-C_4(\gamma) (C_{11}(\gamma)+1)n \e^2}, \label{eq:eqf11}
\end{align}
where $C_{11}(\gamma) = \min \{\lambda,1+C_4(\gamma)\}/ C_4(\gamma)-1$.
Using Fubini's theorem, observe that the RHS in~\eqref{eq:eqf11} can be expressed as $\nu(M_n)$, where the measure $$d\nu(\theta) = \bbE_{P^{n}_0} \left[\frac {   \ \mathcal{LR}_n(\theta,\theta_0)  } {\int_{\Theta}   \ \mathcal{LR}_n(\theta,\theta_0) d\Pi(\theta) } \right] d \Pi(\theta).$$ 
Applying Lemma~\ref{lem:lem6.5} for $X=e^{\gamma R(a',\theta)}$,$c_1 = 2$ , $c_2= C_{11}(\gamma)$ , $t_0 = C_4(\gamma) n \e_n^2$ and $\lambda\geq 1+C_4(\gamma)$, we obtain
\begin{align}
	\bbE_{P^{n}_0} \left[ \int_{\Theta}  \frac { e^{\gamma R(a',\theta)}  \ \mathcal{LR}_n(\theta,\theta_0) d\Pi(\theta)} {\int_{\Theta} \ \mathcal{LR}_n(\theta,\theta_0) d\Pi(\theta) }   \right]	\leq e^{C_{4}(\gamma)n \e_n^2} + \frac{2}{C_{11}(\gamma)} \leq e^{C_{4}(\gamma)n \e_n^2} + 2C_4(\gamma) .
\end{align}

\end{proof}

\begin{proof}{Proof of Theorem~\ref{thrm:thm1}:}
Finally, recall~\eqref{eq:eqf2},
\begin{align}
	\nonumber
	\zeta \bbE_{P^{n}_0}& \left[ \int_{\Theta}  L_n(\theta,\theta_0)\ d\qrs \right]  
	\\ 
	\nonumber
	\leq & \log  \bbE_{P^{n}_0} \left[ \int_{\Theta}  e^{\zeta L_n  (\theta,\theta_0)} \frac {e^{\gamma R(a',\theta)}  \ \mathcal{LR}_n(\theta,\theta_0) d\Pi(\theta) } {\int_{\Theta} e^{\gamma R(a',\theta)}  \ \mathcal{LR}_n(\theta,\theta_0) d\Pi(\theta) }   \right] +  \inf_{Q \in \sQ} \bbE_{P^{n}_0} \bigg[  \scKL(Q\|\Pi_n) 
	\\
	& - \gamma \inf_{a \in  \mathcal{A}} \bbE_Q[R(a,\theta)] \bigg] +\log  \bbE_{P^{n}_0} \left[ \int_{\Theta} e^{\gamma R(a',\theta)}  \ \frac{\mathcal{LR}_n(\theta,\theta_0) d\Pi(\theta)}{\int_{\Theta} \mathcal{LR}_n(\theta,\theta_0) d\Pi(\theta)}  \right] . 
\end{align}
Substituting~\eqref{eq:eqf12} and ~\eqref{eq:eqf6} into the equation above and then using the  definition of $\eta_n^R(\gamma)$, we get
\begin{align}
	\nonumber
	\bbE_{P^{n}_0}& \left[ \int_{\Theta}  L_n(\theta,\theta_0)\ d\qrs \right]  \\
	\nonumber
	 \leq& \frac{1}{\zeta} \left\{ \log(e^{\zeta C_{1}n \e_n^2} + (1+C_0+ 3W^{-\gamma}))  + \log \left( e^{C_{4}(\gamma) n \e_n^2} + 2C_4(\gamma) \right)  + n \eta_n^R(\gamma)  \right\}
	\\
	\nonumber
	 \leq& \left( C_{1}+ \frac{1}{\zeta} C_4(\gamma) \right) n \e_n^2  + \frac{1}{\zeta}n \eta_n^R(\gamma)  +\frac{(1+C_0+ 3W^{-\gamma})e^{(-\zeta C_{1}n \e_n^2)} +2C_4(\gamma)e^{ - C_{4}(\gamma)n \e_n^2}  }{\zeta}  ,
\end{align}
where the last inequality uses the fact that $\log x \leq x -1$. Choosing $ \zeta= C_{10}/2= \frac{\min(C,\lambda,1)}{2C_1}$, 
\begin{align}
	\nonumber
	\bbE_{P^{n}_0}& \left[ \int_{\Theta}  L_n(\theta,\theta_0)\ d\qrs \right]  \\
	& \leq M(\gamma) n (\e_n^2)  + M' n \eta_n^R(\gamma)  +\frac{2 (1+C_0+ 3W^{-\gamma})e^{(- \frac{C_{10}}{2}n \e_n^2)} + 4C_{4}(\gamma) e^{ - C_{4}(\gamma)n \e_n^2}  }{C_{10}} 
	\label{eq:lc21}
\end{align}
where $M(\gamma)= C_{1}+ \frac{1}{\zeta} C_4(\gamma)$ and $M'=\frac{1}{\zeta}$ depend on $C,C_1, C_4(\gamma) ,W\text{ and } \lambda$. Since the last two terms in~\eqref{eq:lc21} decrease and the first term increases as $n$ increases, we can choose $M'$ large enough, 
such that for all $n \geq 1$
\[ M' n \eta_n^R(\gamma)  > \frac{2 (1+C_0+ 3W^{-\gamma})  }{C_{10}} + \frac{4C_{4}(\gamma)}{C_{10}}, \]
and therefore for $M=2M'$,
\begin{align}
	\bbE_{P^{n}_0} \left[ \int_{\Theta}  L_n(\theta,\theta_0)\ d\qrs \right] \leq M(\gamma) n (\e_n^2)  + M n \eta_n^R(\gamma). \label{eq:lc3}
\end{align}

Also, observe that  the LHS in  the above equation is always  positive, therefore $M(\gamma)  \e_n^2  + M  \eta_n^R(\gamma)\geq 0 \ \forall n \geq 1$ and $\gamma>0$.

\end{proof}



\subsection{Proof of Theorem~\ref{thrm:OGValue} and~\ref{thrm:OGdecision} }

\begin{lemma}~\label{lem:Lemma3}
    Given $a' \in \mathcal{A}$ and for a constant M, as defined in Theorem~\ref{thrm:thm1} 
    \begin{align}
        \bbE_{P^{n}_0} \left[ \sup_{a \in \mathcal{A}} \left \vert \bbE_{\qrs}[R(a,\theta)] - R(a,\theta_0) \right \vert \right]  \leq  \left [M(\gamma) \e_n^2  + M  \eta_n^R(\gamma)\right ]^{\frac{1}{2}}.
    \end{align}
\end{lemma}

\begin{proof}
    First, observe that
    \begin{align*}
        \left( \sup_{a \in \mathcal{A}} \left\vert \bbE_{\qrs}[R(a,\theta)] - R(a,\theta) \right\vert \right)^2 & \leq 
        \left( \bbE_{\qrs}\left[ \sup_{a \in \mathcal{A}} \vert R(a,\theta) - R(a,\theta_0)\vert  \right] \right)^2
        \\
        	\leq & \bbE_{\qrs}\left[ \left(\sup_{a \in \mathcal{A}} \vert R(a,\theta) - R(a,\theta_0) \vert \right)^2 \right],
    \end{align*}
    where the last inequality follows from Jensen's inequality. Now, using the Jensen's inequality again 
    \begin{align*}
        & \left ( \bbE_{P^{n}_0} \left[ \sup_{a \in \mathcal{A}} \left\vert \bbE_{\qrs}[R(a,\theta)] - R(a,\theta_0) \right\vert \right] \right)^2 
        \\ 
        & \quad \quad \leq  \bbE_{P^{n}_0} \left[  \left( \sup_{a \in \mathcal{A}} \left\vert \bbE_{\qrs}[R(a,\theta)] - R(a,\theta_0) \right\vert \right)^2 \right].
    \end{align*}
    Now, using Theorem~\ref{thrm:thm1} the result follows immediately.
    
\end{proof}
\begin{proof}[Proof of Theorem~\ref{thrm:OGValue}]
    Observe that 
    \begin{align}
        \nonumber
        &R( \ars,\theta_0) - \inf_{z \in \mathcal{A}} R(z,\theta_0)
        \\
        \nonumber
        & = \vert  R( \ars,\theta_0) - \inf_{z \in \mathcal{A}} R(z,\theta_0)  \vert 
        \\
        \nonumber
        & =  R( \ars,\theta_0) - \bbE_{Q^*_{\ars,\gamma}(\theta|\nX)}[  R(\ars,\theta)  ]  +   \bbE_{Q^*_{\ars,\gamma}(\theta|\nX)}[  R(\ars,\theta)  ]  - \inf_{z \in \mathcal{A}} R(z,\theta_0) 
        \\
        \nonumber
        & \leq  \left \vert  R( \ars,\theta_0) - \bbE_{Q^*_{\ars,\gamma}(\theta|\nX)}[  R(\ars,\theta)  ]   \right \vert + \left \vert  \bbE_{Q^*_{\ars,\gamma}(\theta|\nX)}[  R(\ars,\theta)  ]  - \inf_{a \in \mathcal{A}} R(a,\theta_0)  \right \vert 
        \\
        & \leq 2 \sup_{a \in \mathcal{A}} \left\vert \int  R(a,\theta) dQ^*_{\ars,\gamma}(\theta|\nX) - R(a,\theta_0) \right\vert. 
        \label{eq:eqt21b}
    \end{align}
    
    Given $\ars \in \mathcal{A}$ and for a constant M (defined in Theorem~\ref{thrm:thm1}), we have from Lemma~\ref{lem:Lemma3} for $a'=\ars$
    \begin{align}
        \bbE_{P^{n}_0} \left[ \sup_{a \in \mathcal{A}} \left \vert \int  R(a,\theta) dQ^*_{\ars,\gamma}(\theta|\nX) - R(a,\theta_0) \right \vert \right]  \leq  \left [ M(\gamma) \e_n^2  + M  \eta_n^R(\gamma)\right ]^{\frac{1}{2}}.
        \label{eq:OG2}
    \end{align}
    
    It follows from above that the $P_0^n-$ probability of the following event is at least $ 1- \tau^{-1}$:
    \begin{align}
        \bigg\{ \nX:   R( \ars,\theta_0) - \inf_{z \in \mathcal{A}} R(z,\theta_0)  \leq 2\tau  \left[ M(\gamma) \e_n^2  + M  \eta_n^R(\gamma) \right]^{\frac{1}{2}}  \bigg\}.
    \end{align}
\end{proof}

\begin{proof}[Proof of Theorem~\ref{thrm:OGdecision}:]
    Since, the result in Lemma~\ref{lem:Lemma3} holds for any $a' \in \sA$, we fix $a' =\ars $. Now observe  that for any $\gamma>0$ and $\tau>0$, the result in Lemma~\ref{lem:Lemma3} implies that the $P_0^n-$ probability of 
    \begin{align}
        \left\{ \left[ M (\e_n^2 + \eta_n^R(\gamma)) \right]^{-\frac{1}{2}} \sup_{a \in \mathcal{A}} \left\vert \int   R(a,\theta) dQ^*_{\ars,\gamma}(\theta|\nX) -R(a,\theta_0) \right\vert > \tau \right\}
    \end{align}
    is at most $\tau^{-1}$. For $\ars$, it follows from the definition of $ \Psi(\cdot)$ that
    \begin{align}
        \nonumber
        \Psi&\left( H(\ars, \underset{a \in \mathcal{A} }{\arg \min}   ~ R(a,\theta_0) ) \right) 
        \\
        \nonumber
        &\leq R(\ars,\theta_0) - \inf_{z \in \mathcal{A}} R(a,\theta_0) 
        \\
        \nonumber
        & =  R(\ars,\theta_0) - \bbE_{Q^*_{\ars,\gamma}(\theta|\nX)}[  R(\ars,\theta)  ]   +   \bbE_{Q^*_{\ars,\gamma}(\theta|\nX)}[  R(\ars,\theta)  ]  - \inf_{z \in \mathcal{A}} R(z,\theta_0)
        \\
        \nonumber
        & \leq  \left \vert  R(\ars,\theta_0) - \bbE_{Q^*_{\ars,\gamma}(\theta|\nX)}[  R(\ars,\theta)  ]  \right \vert + \left \vert  \bbE_{Q^*_{\ars,\gamma}(\theta|\nX)}[  R(\ars,\theta)  ] - \inf_{a \in \mathcal{A}} R(a,\theta_0)  \right \vert 
        \\
        & \leq 2 \sup_{a \in \mathcal{A}} \left\vert \int  R(a,\theta) dQ^*_{\ars,\gamma}(\theta|\nX) - R(a,\theta_0) \right\vert. 
        \label{eq:eqt22}
    \end{align}
    It follows from the inequality above that
    \begin{align}
        \nonumber
        \bigg\{ &\left[ M (\e_n^2 + \eta_n^R(\gamma)) \right]^{-\frac{1}{2}} \Psi\bigg( H(\ars, \underset{z \in \mathcal{A} }{\arg \min}   ~ R(a,\theta_0) ) \bigg) > 2 \tau \bigg\}
        \\
        & \subseteq \left\{ \left[ M (\e_n^2 + \eta_n^R(\gamma)) \right]^{-\frac{1}{2}} \sup_{a \in \mathcal{A}} \left\vert \int  R(a,\theta) dQ^*_{\ars,\gamma}(\theta|\nX) - R(a,\theta_0) \right\vert > \tau \right\} .
        \label{eq:eq32}
    \end{align} 
    Therefore, using the condition on the growth function in the statement of the theorem that, $\frac{ \Psi\left(H\left(\ars, \underset{a \in \mathcal{A} }{\arg \min}   ~ R(a,\theta_0) \right)\right)}{H\left(\ars, \underset{a \in \mathcal{A} }{\arg \min}   ~ R(a,\theta_0) \right)^{\delta}} =  \kappa,$ the $P_0^n-$ probability of the following event is at least $ 1- \tau^{-1}$:
    \begin{align}
        \bigg\{    H(\ars, \underset{a \in \mathcal{A} }{\arg \min}   ~ R(a,\theta_0) )  \leq \tau^{\frac{1}{\delta}} \left[ \frac{2 \left[ M(\gamma) \e_n^2  + M  \eta_n^R(\gamma) \right]^{\frac{1}{2}} }{ \kappa } \right]^{\frac{1}{\delta}} \bigg\}.
    \end{align}
    This concludes the proof.

\end{proof}

\subsection{Proofs in Section~\ref{sbsec:PropEta}}
\begin{proof}[Proof of Proposition~\ref{prop:eta_n}]

Using the definition of  $\eta_n^R( \gamma)$ and the posterior distribution $\Pi(\theta|\nX)$, observe that
\begin{align}
	\nonumber
	n\eta_n^R(\gamma) &= \inf_{Q \in \sQ} \bbE_{P^{n}_0} \left[  \scKL(Q\|\Pi_n) 
	- \gamma  \inf_{a \in \sA}  \bbE_Q[R(a,\theta)] \right]
	\\
	\nonumber
	= \inf_{Q \in \sQ} &\bbE_{P^{n}_0} \left[  \scKL(Q\|\Pi) + \int_{\Theta}   	dQ(\theta)  \log \left( \frac{\int d\Pi(\theta)p(\nX|\theta) }{p(\nX|\theta)} \right) 
	- \gamma  \inf_{a \in \sA}  \bbE_Q[R(a,\theta)] \right]
	\\
	\nonumber
	= \inf_{Q \in \sQ}& \left[ \scKL(Q\|\Pi ) - \gamma  \inf_{a \in \sA}  \bbE_Q[R(a,\theta)] + \bbE_{P^{n}_0} \left[  \bbE_{Q}\left[  \log \left( \frac{\int d\Pi(\theta)p(\nX|\theta) }{p(\nX|\theta)} \right) \right] \right]\right] .
\end{align} 
Now, using  Fubini's in the last term of the equation above, we obtain
\begin{align}
	\nonumber
	n\eta_n^R(\gamma) &=  \inf_{Q \in \sQ} \Bigg[ \scKL(Q(\theta)\|\Pi(\theta) ) - \gamma  \inf_{a \in \sA}  \bbE_Q[R(a,\theta)] 
	\\
	& + \bbE_Q \left[ \scKL \left(dP^{n}_0\| p(\nX|\theta) \right) - \scKL\left( dP^{n}_0 \bigg\| \int d\Pi(\theta)p(\nX|\theta) \right)  \right]\Bigg] .
\end{align} 
Observe that, $\int_{\sX^n}\int d\Pi(\theta)p(\nX|\theta) d\nX = 1$. Since, \scKL \ is always non-negative, it follows from the equation above that
\begin{align}
	\nonumber
	&\eta_n^R(\gamma) 
	\\
	\nonumber
	&\leq \frac{1}{n} \inf_{Q \in \sQ} \left[ \scKL\left(Q(\theta)\|\Pi(\theta) \right) - \gamma  \inf_{a \in \sA}  \bbE_Q[R(a,\theta)] + \bbE_Q \left[ \scKL\left(dP^{n}_0\| p(\nX|\theta) \right) \right]  \right]
	\\
	&\leq \frac{1}{n} \inf_{Q \in \sQ} \left[ \scKL\left(Q(\theta)\|\Pi(\theta) \right) + \bbE_Q \left[ \scKL\left(dP^{n}_0\| p(\nX|\theta) \right) \right]  \right] - \frac{\gamma}{n}  \inf_{Q \in \sQ} \inf_{a \in \sA}  \bbE_Q[R(a,\theta)],
	\label{eq:eq1}
\end{align}
where the last inequality follows from the following fact, for any functions $f(\cdot)$ and
\vspace{0em} $g(\cdot)$, $$\inf (f -g) \leq \inf f - \inf g.$$
Recall $\e_n'\geq \frac{1}{\sqrt{n}}$. 
Now, using Assumption~\ref{assump:Asf11}, it is straightforward to observe that the first term in~\eqref{eq:eq1}, 
\begin{align}
	\frac{1}{n} \inf_{Q \in \sQ} \left[ \scKL\left(Q(\theta)\|\Pi(\theta) \right) + \bbE_Q \left[ \scKL\left(dP^{n}_0\| p(\nX|\theta) \right) \right]  \right] \leq C_9 \e_n'^2. 
	\label{eq:eq1b}
\end{align}
Now consider the last term in~\eqref{eq:eq1}. Notice that the coefficient of $\frac{1}{n}$ is independent of $n$ and is bounded from below. Therefore, there exist a constant $C_8=-  \inf_{Q \in \sQ} \inf_{a \in \sA}  \bbE_Q[R(a,\theta)] $, 
such that with equation~\eqref{eq:eq1b} it follows  that $\eta_n^R(\gamma) \leq \gamma  n^{-1}C_8 +C_9\e_n'^2$ and the result follows.

\end{proof}

\begin{proof}[{Proof of Proposition~\ref{prop:eta_gamma}}]
    
    First recall that
    \begin{align} 
        \nonumber
        n\eta_n^R(\gamma) &= \inf_{Q \in \sQ} \bbE_{P^{n}_0} \left[  \scKL(Q(\theta)\|\Pi(\theta|\nX)) 
        - \gamma \inf_{a \in  \mathcal{A}} \bbE_Q[R(a,\theta)] \right]
        \\
        &= \inf_{Q \in \sQ} \bbE_{P^{n}_0} \left[  \scKL(Q(\theta)\|\Pi(\theta|\nX))  \right]
        - \gamma \inf_{a \in  \mathcal{A}} \bbE_Q[R(a,\theta)]. 
        \label{eq:eqeta}
    \end{align}
    
    Observe that the optimization problem is equivalent to solving :
    \begin{align} 
        \min_{Q \in \sQ} \bbE_{P^{n}_0} \left[  \scKL(Q(\theta)\|\Pi(\theta|\nX))  \right] \text{ s.t. } 
        - \inf_{a \in  \mathcal{A}} \bbE_Q[R(a,\theta)] \leq 0 .
    \end{align}
    
    Now for any $\gamma>0$, $Q^*_{\gamma}(\theta) \in \sQ$ that minimizes the objective in~\eqref{eq:eqeta} is primal feasible if
    \[- \inf_{a \in  \mathcal{A}} \int_{\Theta}  dQ^*_{\gamma}(\theta) R(a,\theta)	  \leq 0. \]

    Therefore, it is straightforward to  observe that 
    as $\gamma$ increases $n\eta_n^R(\gamma)$ decreases that is
    \[ \bbE_{P^{n}_0} \left[  \int_{\Theta}   	dQ^*_{\gamma}(\theta)  \log \frac{dQ^*_{\gamma}(\theta)} { d\Pi(\theta|\nX)  }   
    - \gamma \inf_{a \in  \mathcal{A}} \int_{\Theta}  dQ^*_{\gamma}(\theta) R(a,\theta)	  \right].\]
    
\end{proof} 

    


\subsection{Sufficient conditions on $R(a,\theta)$ for existence of tests }\label{sbsec:ConditionsL}
\textcolor{black}{To show the existence of test functions, as required in Assumption~\ref{assump:Asf1}, we will use the following result from ~\cite[Theorem 7.1]{GGV}}, that is applicable only to distance measures that are bounded above by the Hellinger distance.
\begin{lemma}[Theorem 7.1 of \citep{GGV}]\label{lem:ggv71}
	Suppose that for some non-increasing function $D(\e)$, some $\e_n>0$  and for every $\e > \e_n$,
	\[ N\left( \frac{\e}{2}, \left\{ P_{\theta}: \e \leq m(\theta,\theta_0) \leq 2\e \right\} ,m \right) \leq D(\e), \]
	where $m(\cdot,\cdot)$ is any distance measure bounded above by Hellinger distance.	Then for every $\e> \e_n$, there exists a test $\phi_n$ (depending on $\e>0$) such that, for every $j\geq 1$,
	\begin{align*}
		\bbE_{P^{n}_0}[ \phi_n ]  &\leq D(\e) \exp \left(-\frac{1}{2}n \e^2 \right)\frac{1}{1-\exp \left (-\frac{1}{2} n\e^2 \right)}, \text{and}\\
		\underset{\{ \theta\in \Theta_n(\e)  : m(\theta,\theta_0) > j \e  \}}{\sup} \bbE_{P^{n}_{\theta}}[ 1- \phi_n ] &\leq  \exp\left(- \frac{1}{2} n \e^2 j\right).
	\end{align*} 
\end{lemma}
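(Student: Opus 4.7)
The plan is to construct $\phi_n$ as a pointwise maximum of Le Cam/Birge tests against individual balls obtained from the covering hypothesis, arranged into shells around $\theta_0$. Because $m \le h$, any ball-center with $m$-distance $\ge \rho$ from $P_0$ is automatically at Hellinger distance $\ge \rho$, so the Hellinger-based Birge machinery is available throughout.

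The cornerstone is a single-ball test: for any convex set $\mathcal C$ of probability measures with $\inf_{P \in \mathcal C} h(P, P_0) \ge \rho$, there is a test $\phi$ with $\max\bigl(\mathbb E_{P_0^n}[\phi],\, \sup_{P \in \mathcal C}\mathbb E_{P^n}[1-\phi]\bigr) \le e^{-n\rho^2/2}$. I would derive this from the Hellinger affinity inequality $\int\sqrt{dP\,dQ}\le 1 - h^2(P,Q)/2$, which exponentiates under the $n$-fold product, combined with the minimax theorem applied to the bilinear functional $\mathbb E_{P_0^n}[\phi] + \mathbb E_{P^n}[1-\phi]$ over the convex sets $\{\phi: 0 \le \phi \le 1\}$ and $\mathcal C$. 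This is the substantive probabilistic input; the rest is a standard chaining argument.

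Next I would decompose $\{\theta: m(\theta,\theta_0) > \epsilon\}$ into shells $\mathcal S_k := \{\theta: k\epsilon < m(\theta,\theta_0) \le 2k\epsilon\}$ for $k = 1, 2, 4, 8, \ldots$. Applying the covering hypothesis at scale $k\epsilon$ (valid since $k\epsilon \ge \epsilon > \epsilon_n$) and using monotonicity of $D$ furnishes, for each $k$, at most $D(\epsilon)$ centers in $\mathcal S_k$ whose $m$-balls of radius $k\epsilon/2$ cover $\mathcal S_k$. Each such center has $m$-distance at least $k\epsilon$ from $\theta_0$, hence Hellinger distance at least $k\epsilon$; moreover, by the triangle inequality in $m$, every point in its $m$-ball lies at $m$-distance (hence Hellinger distance) at least $k\epsilon/2$ from $\theta_0$. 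Applying the single-ball test to each such ball and setting $\phi_n := \max_{k,i}\phi^{k,i}$, a union bound delivers
\[
    \mathbb E_{P_0^n}[\phi_n] \le D(\epsilon) \sum_{k} e^{-c n k^2 \epsilon^2} \le D(\epsilon)\,\frac{e^{-c n \epsilon^2}}{1 - e^{-c n \epsilon^2}},
\]
matching the Type~I form. For any $\theta$ with $m(\theta,\theta_0) > j\epsilon$, the enclosing shell satisfies $k \ge j/2$, so the corresponding ball's test delivers $\mathbb E_{P_\theta^n}[1-\phi_n] \le e^{-c n k^2 \epsilon^2} \le e^{-c n j \epsilon^2}$, and tuning $c = 1/2$ reproduces the claimed constants.

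The main obstacle is the single-ball Birge test: it is the genuinely probabilistic step carrying the exponential decay that drives the whole bound, and the covering/union-bound aggregation is essentially bookkeeping. A subsidiary subtlety is reconciling the $m$-covering with the Hellinger-based test — one must verify that not merely the ball centers but the \emph{entire} balls inherit the required Hellinger separation from $P_0$ — which is resolved by exploiting $m \le h$ together with the triangle inequality in $m$.
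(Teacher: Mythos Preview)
The paper does not prove this lemma at all: its ``proof'' is a one-line deferral to Theorem~7.1 of Ghosal--Ghosh--van der Vaart. Your sketch \emph{is} essentially the GGV argument (single-ball Le~Cam/Birg\'e test via Hellinger affinity, shell decomposition, union bound), so you have reconstructed the cited proof correctly. One minor point worth tightening if you write it out in full: the Birg\'e test for a ball of alternatives is usually stated for convex sets or, equivalently, obtained by passing to the convex hull and checking that the Hellinger separation survives; your sketch implicitly relies on this but does not make it explicit.
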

\begin{proof}[Proof of Lemma~\ref{lem:ggv71}:]
    Refer Theorem 7.1 of \cite{GGV}.
\end{proof}

For the remaining part of this subsection we assume that $\Theta \subseteq \bbR^d$. In the subsequent paragraph, we state further assumptions on the risk function to show $L_n(\cdot,\cdot)$ as defined in~\eqref{eq:lossr} satisfies Assumption~\ref{assump:Asf1}. For brevity we denote $n^{-1/2} \sqrt{L_n(\theta,\theta_0)}$ by $d_L(\theta,\theta_0)$, that is
\vspace{0em}
\begin{equation}
d_L(\theta_1,\theta_2) := \sup_{a \in \mathcal{A}} \vert R(a,\theta_1) - R(a,\theta_2) \vert,~ \forall\{\theta_1,\theta_2\}\in\Theta
\label{eq:eqm1}
\end{equation}
and the covering number of the set $T(\e):= \{P_{\theta} : d_L(\theta,\theta_0) < \e\}$ as  $N(\delta, T(\e), d_L )$, where $\delta > 0 $ is the radius of each ball in the cover. 
We assume that the risk function $R(a,\cdot)$ satisfies the following bound. 
\vspace{0em}
\begin{assumption}\label{assump:Asf5} 
	The model risk satisfies 
	$$d_L(\theta_1,\theta_2) \vert  \leq K_1 d_{H}(\theta,\theta_0), $$
	where $d_{H}(\theta_1,\theta_2) $  is the Hellinger distance between two models $P_{\theta_1}$ and $P_{\theta_2}$. 
\end{assumption}
For instance, suppose the definition of model risk is $R(a,\theta)= \int_{\sX}\ell(x,a)p(y|\theta)dx$, where $\ell(x,a)$ is an underlying loss function. Then, observe that Assumption~\ref{assump:Asf5} is trivially satisfied if 
$\ell(x,a)$ is bounded in $x$ for a given $a \in \mathcal{A}$ and $\mathcal{A}$ is compact, since $d_L(\theta_1,\theta_2)$ can be bounded by the total variation distance $d_{TV}(\theta_1,\theta_2) = \frac{1}{2} \int \left  \vert   dP_{\theta_1}(x) - dP_{\theta_2}(x) \right \vert $ and total variation distance is bounded above by the Hellinger distance~\citep{gibbs2002choosing}. Under the  assumption above it also follows that we can apply Lemma~\ref{lem:ggv71} to the metric $d_L(\cdot,\cdot)$ defined in~\eqref{eq:eqm1}. Now, we will also assume an additional regularity condition on the risk function.

\begin{assumption}\label{assump:Asf4} 
	For every $\{\theta_1,\theta_2\} \in \Theta$, there exists a constant $K_2>0$ such that
    $$d_L(\theta_1,\theta_2) \leq K_2 \| \theta_1 -\theta_2 \|, $$
\end{assumption}

We can now show that the covering number of the set $T(\e)$ satisfies 
\begin{lemma}\label{lem:cover}
	Given $\e>\delta>0$, and under Assumption~\ref{assump:Asf4},
	\begin{align}
		N(\delta, T(\e), d_L ) < \left( \frac{2 \e}{\delta} + 2 \right)^d.
		\label{eq:eqc1}
	\end{align}
\end{lemma}

\begin{proof}[Proof of Lemma~\ref{lem:cover}:]	
    For any positive $k$ and $\e$, let $\theta \in [\theta_0 - k\e, \theta_0 + k\e ]^d \subset \Theta \subset \mathbb R^d$. Now consider a set $H_i= \{\theta_i^0,\theta_i^1, \ldots \theta_i^J, \theta_i^{J+1} \}$ and $H=\bigotimes_d H_i $ with $J= \lfloor\frac{2 k\e}{\delta'}\rfloor$, where  $\theta^j_i = \theta_0 - k\e + i\delta'$ for $j=\{0,1,\dots , J \}$ and $\theta_i^{J+1}= \theta_0 + k\e$. Observe that for any $\theta \in [\theta_0 - k\e, \theta_0 + k\e ]^d$, there exists a $\theta^j \in  H$ such that $\| \theta - \theta^j \| < \delta'$. Hence, union of the $\delta'-$balls for each element in set $H$ covers $[\theta_0 - k\e, \theta_0 + k\e ]^d$, therefore $N(\delta',[\theta_0 - k\e, \theta_0 + k\e ]^d, \|\cdot\|) = (J+2)^d$.
    
     Now, due  to Assumption~\ref{assump:Asf4}, for any $\theta \in [\theta_0 - k\e, \theta_0 + k\e ]^d$
    \begin{align*}
        d_L(\theta,\theta_0) \leq K_2 \| \theta - \theta^j \| \leq K_2 \delta',
    \end{align*}
    For brevity, we denote $n^{-1} {L_n(\theta,\theta_0)}$ by $d_L(\theta,\theta_0)$, that is
    \vspace{0em}
    \begin{equation}
        d_L(\theta_1,\theta_2) := \sup_{a \in \mathcal{A}} \vert R(a,\theta_1) - R(a,\theta_2) \vert,~ \forall\{\theta_1,\theta_2\}\in\Theta,
    \end{equation}
    and the covering number of the set $T(\e):= \{P_{\theta} : d_L(\theta,\theta_0) < \e\}$ as  $N(\delta, T(\e), d_L )$, where $\delta > 0 $ is the radius of each ball in the cover. 
    
    Hence, $\delta'$-cover of set $[\theta_0 - k\e, \theta_0 + k\e ]^d$ is $K_1\delta'$ cover of set $T(\e)$ with $k= 1/K_2$. Finally,
    \begin{align*}
        N(K_2 \delta', T(\e), d_L ) \leq (J+2)^d \leq \left( \frac{2 k\e}{\delta'} + 2 \right)^d=   \left( \frac{2 \e}{K_2\delta'} + 2 \right)^d
    \end{align*}
    
    which implies for $\delta = K_2\delta'$, 
    \begin{align*}
        N(\delta, T(\e), d_L ) \leq  \left(\frac{2 \e}{\delta} + 2 \right)^s.  
    \end{align*}
\end{proof}

Observe that the RHS in~\eqref{eq:eqc1} is a decreasing function of $\delta$, infact for $\delta=\e/2$, it is a constant in  $\e$. Therefore, using Lemmas~\ref{lem:ggv71} and \ref{lem:cover}, we show in the following result that $L_n(\theta,\theta_0)$ in~\eqref{eq:lossr} satisfies Assumption~\ref{assump:Asf1}. 

\begin{lemma}\label{lem:ass1} Fix $n \geq 1$. For a given $\e_n>0$ and every $\e> \e_n$, such that $n\e_n^2\geq 1$. Under Assumption~\ref{assump:Asf5} and ~\ref{assump:Asf4}
	, $L_n(\theta,\theta_0) = n \left(\sup_{a \in \mathcal{A}} \vert R(a,\theta) - R(a,\theta_0)  \vert \right)^2 $	satisfies
	\vspace{0em}
	\begin{align}
		\bbE_{P^{n}_0}[ \phi_n ]  & \leq C_0 \exp(-C n \e^2 ), \label{eq:eq12} \\
		\underset{\{ \theta\in \Theta : L_n(\theta,\theta_0) \geq C_1n \e^2  \}}{\sup} \bbE_{P^{n}_{\theta}}[ 1- \phi_n ]  & \leq \exp(-C n \e^2 ),\label{eq:eq13}
	\end{align} 
	where $C_0=2*10^s$ and $C = \frac{C_1}{2 K_1^2}$ for a constant $C_1>0$.
\end{lemma}

\begin{proof}[Proof of Lemma~\ref{lem:ass1}:]
    Recall $d_L(\theta,\theta_0) = \left(\sup_{a \in  \mathcal{A}} |R(a,\theta) -  R(a,\theta_0)  |\right)$ and $T(\e)= \{ P_{\theta}: d_L(\theta,\theta_0) < \e \}$. Using Lemma~\ref{lem:cover}, observe that for every $ \e > \e_n > 0$,
    \begin{align*}
        N\left(\frac{\e}{2}, \{ \theta: \e \leq d_L(\theta,\theta_0) \leq 2\e \}, d_L \right) \leq N \left(\frac{\e}{2}, \{ \theta: d_L(\theta,\theta_0) \leq 2\e \}, d_L \right) < 10^d.
    \end{align*}
    Next, using Assumption~\ref{assump:Asf5} we have
    \[ d_L(\theta,\theta_0) \leq  K_1   d_{H}(\theta,\theta_0). \]   
    It follows from the above two observations and Lemma 2 that, for every $\e > \e_n>0$, there exist tests $\{\phi_{n,\e}\}$ such that 
    \begin{align}
        \bbE_{P^{n}_0}[ \phi_{n,\e} ]  &\leq 10^d  \frac{\exp(-C' n \e^2 )}{1-\exp(-C'n\e^2)},  \\
        \underset{\{ \theta\in \Theta : d_L(\theta,\theta_0) \geq \epsilon  \}}{\sup} \bbE_{P^{n}_{\theta}}[ 1- \phi_{n,\e} ] &\leq  \exp(-C' n \e^2 ),\label{eq:eq3}
    \end{align} 
    where $C' = \frac{1}{2 K_1^2}$. Since the above two conditions hold for every $\e > \e_n$, we can choose a constant $K>0$ such that for every $\e >  \e_n$ 
    \begin{align}
        \bbE_{P^{n}_0}[ \phi_{n,\e} ]  &\leq 10^d  \frac{\exp(-C' K^2 n \e^2 )}{1-\exp(-C'K^2n\e^2)} \leq 2 (10^d) e^{-C' K^2  n \e^2 }, \label{eq:eq2} \\
        \underset{\{ \theta\in \Theta : L_n(\theta,\theta_0) \geq K^2n \e^2  \}}{\sup} \bbE_{P^{n}_{\theta}}[ 1- \phi_{n,\e} ]  & = \underset{\{ \theta\in \Theta : d_L(\theta,\theta_0) \geq K\e  \}}{\sup} \bbE_{P^{n}_{\theta}}[ 1- \phi_{n,\e} ] \leq  e^{-C' K^2 n \e^2 },\label{eq:eq31}
    \end{align} 
    where the second inequality in~\eqref{eq:eq2} holds $\forall n \geq n_0$, where $n_0:= \min\{n\geq 1: C' K^2 n \e^2 \geq \log(2)   \}$
    %
    Hence, the result follows for $C_1=K^2$ and $C= C' K^2$. 
\end{proof}

Since $L_n(\theta,\theta_0)= \frac{1}{n}d_L^2$ satisfies Assumption~\ref{assump:Asf1}, Theorem~\ref{thrm:thm1} implies the following bound.

\begin{corollary}\label{thrm:thm1a}
	Fix $a'\in \sA$ and $\gamma > 0$. Let $\e_n$ be a sequence such that $\e_n \to 0$ as $n \to \infty$, $n \e_n^2 \geq 1$  and
	\vspace{0em} $$L_n(\theta,\theta_0) = n \left(\sup_{a \in \mathcal{A}} \vert R(a,\theta) - R(a,\theta_0)  \vert \right)^2. $$ Then under the Assumptions of Theorem~\ref{thrm:thm1} and Lemma~\ref{lem:ass1} 
	; for $C = \frac{C_1}{2 K_1^2}$, $C_0= 2*10^s$, $C_1>0$ such that $\min(C,C_4(\gamma) + C_5(\gamma)) > C_2 + C_3 + C_4(\gamma) + 2$ , and for $\eta_n^R(\gamma)$ as defined in Theorem~\ref{thrm:thm1}, 
the RSVB approximator of the true posterior $\qrs$ satisfies,
\begin{align}
	\bbE_{P^{n}_0} \left[ \int_{\Theta} L_n(\theta,\theta_0) \qrs d\theta \right] \leq n (\textcolor{black}{M(\gamma)} \e_n^2 + M \eta_n^R(\gamma)), \label{eq:eq_lcbr}
\end{align}
 for sufficiently large $n$ and for a function \textcolor{black}{$M(\gamma)= 2\left( C_{1}+ M C_4(\gamma)  \right)$}~, where $M= \frac{2C_1}{\min(C,\lambda,1)}$
.

\end{corollary}

\begin{proof}[Proof of Corollary~\ref{thrm:thm1a}:]
    Using Lemma~\ref{lem:ass1} observe that for any $\Theta_n(\e) \subseteq \Theta$,  $L_n(\theta,\theta_0)$ satisfies Assumption~\ref{assump:Asf1} with $C_0=2* 10^s$, $C = \frac{C_1}{2K_1^2}$ and for any $C_1>0$, since 
    \begin{align*}
        \underset{\{ \theta\in \Theta_n(\e) : L_n(\theta,\theta_0) \geq C_1 n \epsilon_n^2  \}}{\sup} \bbE_{P^{n}_{\theta}}[ 1- \phi_{n,\e} ] \leq \underset{\{ \theta\in \Theta : L_n(\theta,\theta_0) \geq C_1 n \epsilon_n^2 \}}{\sup} \bbE_{P^{n}_{\theta}}[ 1- \phi_{n,\e} ] &\leq e^{-C n \e_n^2 }.
    \end{align*}
    Hence, applying Theorem~\ref{thrm:thm1} the proof follows.
\end{proof}

\subsection{Proof of Theorem~\ref{thrm:NV},~\ref{thrm:NVOGvalue}, and ~\ref{thrm:NVOGdecision}}
 
\begin{proof}[Proof of Theorem~\ref{thrm:NV}:]

The proof follows immediately from Theorem~\ref{thrm:thm1} by taking limit $\gamma\to 0^+$ on either side of its main result, that is
\begin{align}
    \bbE_{P^{n}_0} \left[ \int_{\Theta}  L_n(\theta,\theta_0)\ d\qrs \right] \leq M n (\e_n^2  + \eta_n^R(\gamma) ). 
\end{align}
Fix $n\geq 1$. Now first consider the LHS, use the fact that  for any $a'\in\sA$, $\lim_{\gamma\to 0^+} \qrs = \qnv$~\eqref{eq:eqRSVB}, the integrand is also non-negative , and $n\eta_n^{R}(\gamma)<\infty$ due to Proposition~\ref{prop:eta_gamma} (since a decreasing sequence is bounded given $\eta_n^{R}(0)<\infty$), therefore, using Fatou's Lemma we have 
\begin{align}
   \bbE_{P^{n}_0} \left[ \int_{\Theta}  L_n(\theta,\theta_0)\  d \qnv \right] \leq \liminf_{\gamma\to 0^+} \bbE_{P^{n}_0} \left[ \int_{\Theta}  L_n(\theta,\theta_0)\ d\qrs \right]  
\end{align}

On the other hand, using similar argument as used in~\eqref{eq:eqRSVB} to show that $\qrs \to \qnv$ as  $\gamma\to 0^+$, it follows that 
\[ \liminf_{\gamma\to 0^+} \eta_n^{R}(\gamma) = \eta_n^{R}(0).\]
Thus the result follows.

\end{proof}

Next, we obtain a high-probability bound on the regret, defined as the uniform difference between the Naive VB approximate posterior risk and the expected loss under the true data generating measure $P_{0}$. 

\begin{lemma}\label{corr:cor2}
	For a constant M as defined in Theorem~\ref{thrm:NV}
	\begin{align}
		\bbE_{P^{n}_0} \left[  \sup_{a \in \mathcal{A}} \left \vert \int    R(a,\theta)  d \qnv - R(a,\theta_0)  \right \vert \right] \leq \left[ M (\e_n^2 + \eta_n(0)) \right]^{\frac{1}{2}}.
	\end{align}
\end{lemma}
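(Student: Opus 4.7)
The plan is to mirror the argument of Lemma~\ref{corr:cor21a} for the naive VB case, i.e.\ with $f(x)=x$ and $R\equiv 0$, applying Theorem~\ref{thrm:thm5} in place of Corollary~\ref{thrm:thm1a}. The key observation is that $\int G(a,\theta)q^*(\theta|\nX)d\theta$ and $G(a,\theta_0)$ can be joined by inserting $G(a,\theta)$ inside the integral, after which the $L^1$-distance on the parameter space dominates.

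First I would note that for any $a\in\sA$,
\[
\left|\int G(a,\theta)\,q^*(\theta|\nX)\,d\theta - G(a,\theta_0)\right|
= \left|\int\bigl(G(a,\theta)-G(a,\theta_0)\bigr)q^*(\theta|\nX)\,d\theta\right|
\leq \int |G(a,\theta)-G(a,\theta_0)|\,q^*(\theta|\nX)\,d\theta,
\]
so taking sup over $a$ and pulling it inside the integral gives
\[
\sup_{a\in\sA}\left|\int G(a,\theta)\,q^*(\theta|\nX)\,d\theta - G(a,\theta_0)\right|
\leq \int \sup_{a\in\sA} |G(a,\theta)-G(a,\theta_0)|\,q^*(\theta|\nX)\,d\theta.
\]

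Next, squaring both sides and applying Jensen's inequality to move the square inside the integral with respect to $q^*(\theta|\nX)$ yields
\[
\left(\sup_{a\in\sA}\left|\int G(a,\theta)\,q^*(\theta|\nX)\,d\theta - G(a,\theta_0)\right|\right)^2
\leq \int \Bigl(\sup_{a\in\sA} |G(a,\theta)-G(a,\theta_0)|\Bigr)^2 q^*(\theta|\nX)\,d\theta
= \frac{1}{n}\int L_n^1(\theta,\theta_0)\,q^*(\theta|\nX)\,d\theta,
\]
recalling the definition $L_n^1(\theta,\theta_0)=n\left(\sup_{a\in\sA}|G(a,\theta)-G(a,\theta_0)|\right)^2$ from Theorem~\ref{thrm:thm5}.

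Then I would take expectation under $P_0^n$ on both sides and apply Jensen's inequality once more (now to push the square outside the expectation on the left), obtaining
\[
\Bigl(\bbE_{P_0^n}\bigl[\sup_{a\in\sA}|\int G(a,\theta)q^*d\theta - G(a,\theta_0)|\bigr]\Bigr)^2
\leq \frac{1}{n}\,\bbE_{P_0^n}\!\left[\int L_n^1(\theta,\theta_0)\,q^*(\theta|\nX)\,d\theta\right].
\]
Finally, applying the bound from Theorem~\ref{thrm:thm5} to the right-hand side gives $\frac{1}{n}\cdot M n(\e_n^2+\kappa_n^2)=M(\e_n^2+\kappa_n^2)$, and taking square roots yields the claim. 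There is no real obstacle here: the entire argument is a routine specialization of Lemma~\ref{corr:cor21a} with $f$ the identity, so the only thing to be careful about is confirming that the identity map trivially satisfies Assumption~\ref{assump:Asf4a} and that the constants $K_1^f(a)$ become $K_1^1(a)$, which Theorem~\ref{thrm:thm5} already accounts for.
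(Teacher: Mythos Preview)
Your proposal is correct and follows essentially the same route as the paper's own proof: pull the sup inside the integral, apply Jensen's inequality first with respect to $q^*(\theta|\nX)$ and then with respect to $P_0^n$, and finish by invoking Theorem~\ref{thrm:thm5}. The only cosmetic difference is that the paper does not explicitly write out the identification with $\frac{1}{n}L_n^1(\theta,\theta_0)$ and the final square root, but these are exactly the steps you spell out.
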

\begin{proof}
The result follows immediately from the following inequalities 
\begin{align*}
	\left( \sup_{a \in \mathcal{A}} \left\vert \bbE_{\qnv}\left[  R(a,\theta)\right] - R(a,\theta_0) \right\vert \right)^2 & \leq  \left( \bbE_{\qnv}\left[ \sup_{a \in \mathcal{A}} \vert R(a,\theta) - R(a,\theta_0) \vert   \right]  \right)^2
	\\
	&\leq \bbE_{\qnv}\left[ \left(\sup_{a \in \mathcal{A}} \vert R(a,\theta) - R(a,\theta_0) \vert \right)^2 \right],
\end{align*}
where the last inequality is  a consequence of Jensens' inequality. Now, using Jensen's inequality again
\vspace{0em} 
\begin{align*}
	&\left ( \bbE_{P^{n}_0} \left[ \sup_{a \in \mathcal{A}} \left \vert \int   R(a,\theta)  d \qnv - R(a,\theta_0) \right \vert \right] \right)^2 
	\\
	&\quad \quad \leq  \bbE_{P^{n}_0} \left[  \left( \sup_{a \in \mathcal{A}} \left\vert \int   R(a,\theta)  d \qnv - R(a,\theta_0) \right\vert \right)^2 \right].
\end{align*} 
Now the result follows immediately using Theorem~\ref{thrm:NV}.
\end{proof}


\begin{proof}[Proof of Theorem~\ref{thrm:NVOGvalue} and ~\ref{thrm:NVOGdecision} ]

The proof is similar to  Theorem~\ref{thrm:OGValue} and ~\ref{thrm:OGdecision} and hence omitted.

\end{proof}

%
%
%

\subsection{Proof of Theorem~\ref{thrm:LC}, \ref{thrm:LCOGvalue} and~\ref{thrm:LCOGdecision}}

\begin{proof}[Proof of Theorem~\ref{thrm:LC}:]

The proof follows immediately from Theorem~\ref{thrm:LC} by substituting  $\gamma= 1$. 

\end{proof}

\begin{proof}{Proof of Theorem~\ref{thrm:LCOGvalue}~and~\ref{thrm:LCOGdecision}}: The proof is similar to  Theorem~\ref{thrm:OGValue} and ~\ref{thrm:OGdecision} and hence omitted.

\end{proof}

\subsection{Newsvendor Problem}

We fix $n^{-1/2}\sqrt{L^{NV}_n(\theta,\theta_0)} =  (\sup_{a \in  \mathcal{A}} | R(a,\theta) - R(a,\theta_0) |)$. Next, we aim to show that the exponentially distributed model $P_{\theta}$ satisfies Assumption~\ref{assump:Asf1}, for distance function $L^{NV}_n(\theta,\theta_0)$. To show this, in the next result we first prove that $d_L^{NV}(\theta,\theta_0) = n^{-1/2}\sqrt{L^{NV}_n(\theta,\theta_0)}$  satisfy Assumption~\ref{assump:Asf5}. Also, recall that the square of Hellinger distance between two exponential distributions with rate parameter $\theta$ and $\theta_0$ is 
\( d_{H}^2(\theta,\theta_0) = 1-2\frac{\sqrt{\theta \theta_0}}{\theta+ \theta_0} = 1-2\frac{\sqrt{ \theta_0/\theta}}{1+ \theta_0/\theta}.\)

 \begin{lemma}\label{lem:HellNV}
    For any $\theta\in \Theta=[T,\infty)$,  and $a\in \sA$,
    \[d_L^{NV}(\theta,\theta_0) \leq \left[
    \frac{\left( \frac{h}{\theta_0}- \frac{h}{T} \right)^2  + (b+h)^2 \left( \frac{e^{-\underline{a} T}}{T}  -  \frac{e^{- \underline{a} \theta_0}}{\theta_0}\right)^2}{d^2_{H}(T,\theta_0)}\right]^{1/2} d_{H}(\theta,\theta_0)\]
    where $\underline{a} := \min \{a\in \mathcal A\}$ and $\underline{a} >0$ and $\theta_0$ lies in the interior of $\Theta$.
     \end{lemma}
 \begin{proof}
    

Observe that for any $a\in \sA$, 
    \begin{align} 
        \nonumber
        &|R(a,\theta)- R(a,\theta_0)|^2 
        \\
        \nonumber
        =&  \left| \frac{h}{\theta_0}- \frac{h}{\theta} + (b+h) \left( \frac{e^{-a \theta}}{\theta}  -  \frac{e^{-a \theta_0}}{\theta_0}\right) \right|^2 
        \\
        \nonumber
        &= \left( \frac{h}{\theta_0}- \frac{h}{\theta} \right)^2  + (b+h)^2 \left( \frac{e^{-a \theta}}{\theta}  -  \frac{e^{-a \theta_0}}{\theta_0}\right)^2 +2 \left( \frac{h}{\theta_0}- \frac{h}{\theta} \right)(b+h) \left( \frac{e^{-a \theta}}{\theta}  -  \frac{e^{-a \theta_0}}{\theta_0}\right) 
        \\
        & \leq \left( \frac{h}{\theta_0}- \frac{h}{\theta} \right)^2  + (b+h)^2 \left( \frac{e^{-a \theta}}{\theta}  -  \frac{e^{-a \theta_0}}{\theta_0}\right)^2 ,
        \label{eq:NV2}
        \end{align}    
    
    where the last inequality follows since for $\theta\geq \theta_0$, $\left(\frac{h}{\theta_0}- \frac{h}{\theta} \right)\geq 0$ and  $\left( \frac{e^{-a \theta}}{\theta}  -  \frac{e^{-a \theta_0}}{\theta_0}\right)<0$ and vice versa if $\theta<\theta_0$  that together makes the last term in the penultimate equality   negative for  all $\theta \in  \Theta$.
   Moreover, the first  derivative of the upperbound with respect to $\theta$ is
    \[ 2 \left( \frac{h}{\theta_0}- \frac{h}{\theta} \right) \frac{h}{\theta^2}  - 2(b+h)^2 \left( \frac{e^{-a \theta}}{\theta}  -  \frac{e^{-a \theta_0}}{\theta_0}\right) e^{-a\theta}\left[ \frac{1}{\theta^2} + \frac{a}{\theta}\right],\]
    and it is negative when $\theta\leq \theta_0$ and  positive when $\theta > \theta_0$ for  all $b>0,h>0,$ and $a\in\sA$.  Therefore, the upperbound in~\eqref{eq:NV2} above is decreasing function of $\theta$ for all $\theta\leq \theta_0$ and increasing function of $\theta$ for all $\theta > \theta_0 $. The upperbound is tight at $\theta=\theta_0$.
    
    Now recall that the squared  Hellinger distance between  two exponential distributions with rate parameter $\theta$ and $\theta_0$ is 
    \[ d_{H}^2(\theta,\theta_0) = 1-2\frac{\sqrt{\theta \theta_0}}{\theta+ \theta_0} = 1-2\frac{\sqrt{ \theta_0/\theta}}{1+ \theta_0/\theta}  = \frac{(1-\sqrt{ \theta_0/\theta})^2}{1+ (\sqrt{ \theta_0/\theta})^2}.\]
    
    Note that for $\theta\leq \theta_0$,  $d_{H}^2(\theta,\theta_0) $ is a decreasing function of $\theta$  and for all $\theta > \theta_0$ it is an increasing function of $\theta$. Also, note that as $\theta\to \infty$, the squared Hellinger distance as well as the upperbound computed in~\eqref{eq:NV2} converges to a constant for a given $h,b,\theta_0$ and $a$.  However, as $\theta \to 0$, the $d_{H}^2(\theta,\theta_0) \to 1$  but  the upperbound computed in~\eqref{eq:NV2} diverges.

    Since, $\Theta=[T,\infty)$ for some $T>0$ and $T\leq \theta_0$,  observe that if we scale $d_{H}^2(\theta,\theta_0)$ by factor by which the upperbound computed in~\eqref{eq:NV2} is greater than $d_{H}$ at $\theta=T$,  then   
    \begin{align*} 
        &\left( \frac{h}{\theta_0}- \frac{h}{\theta} \right)^2  + (b+h)^2 \left( \frac{e^{-a \theta}}{\theta}  -  \frac{e^{-a \theta_0}}{\theta_0}\right)^2 
        \\
    &\leq \frac{\left( \frac{h}{\theta_0}- \frac{h}{T} \right)^2  + (b+h)^2 \left( \frac{e^{-{a} T}}{T}  -  \frac{e^{- {a} \theta_0}}{\theta_0}\right)^2}{d_{H}^2(T,\theta_0)} d_{H}^2(\theta,\theta_0)
    \\
    &\leq \frac{\left( \frac{h}{\theta_0}- \frac{h}{T} \right)^2  + (b+h)^2 \left( \frac{e^{-\underline{a} T}}{T}  -  \frac{e^{- \underline{a} \theta_0}}{\theta_0}\right)^2}{d_{H}^2(T,\theta_0)} d_{H}^2(\theta,\theta_0),
        \end{align*}
    where $\underline{a} = \inf \{a:a\in \sA\}$ and in the last  inequality we used the fact that $\left( \frac{e^{-{a} T}}{T}  -  \frac{e^{- {a} \theta_0}}{\theta_0}\right)^2$ is a decreasing function of $a$ for any $b,h,T,$ and $\theta_0$    . 
    Since, the RHS in the equation  above does not depend on  $a$, it follows from the result in~\eqref{eq:NV2} and the definition of $L_n^{NV}(\theta,\theta_0)$ that 
    \[d_L^{NV}(\theta,\theta_0) \leq \left[
    \frac{\left( \frac{h}{\theta_0}- \frac{h}{T} \right)^2  + (b+h)^2 \left( \frac{e^{-\underline{a} T}}{T}  -  \frac{e^{- \underline{a} \theta_0}}{\theta_0}\right)^2}{d_{H}^2(T,\theta_0)} \right]^{1/2} d_{H}(\theta,\theta_0).\] 
\end{proof}

    \begin{lemma}\label{lem:LipHellNV}
        For any $\theta\in \Theta = [T,\infty)$,  for sufficiently small $T>0$,  and $\theta_0$ lying in the interior of $\Theta$, we have
        \[ d_{H}^2(\theta,\theta_0) = 1-2\frac{\sqrt{\theta \theta_0}}{\theta+ \theta_0} \leq \left(\frac{\theta_0}{(T+\theta_0)^2}\left(\sqrt{\frac{\theta_0}{T}} -\sqrt{\frac{T}{\theta_0}}\right)\right)|\theta - \theta_0|.\]
        \end{lemma}
    \begin{proof}
        Observe that 
        \[\frac{\partial d_{H}^2(\theta,\theta_0) }{\partial \theta} =- 2\frac{(\theta+\theta_0)\frac{\sqrt{\theta_0}}{2\sqrt{\theta}}-\sqrt{\theta \theta_0} }{(\theta+\theta_0)^2} 
        = \frac{\theta_0 }{(\theta+\theta_0)^2} \left({\sqrt{\frac{\theta}{\theta_0}}}-{\sqrt{\frac{\theta_0}{\theta}}}\right). \]
        
        Observe that $\theta \to 0$, $\frac{\partial d_{H}^2(\theta,\theta_0) }{\partial \theta}\to \infty$. Since,$ \theta\in \Theta = [T,\infty)$, therefore the $\sup_{\theta \in \Theta}\left| \frac{\partial d_{H}^2(\theta,\theta_0) }{\partial \theta} \right| <\infty$. In fact, for sufficiently small $T>0$,
        $\sup_{\theta \in \Theta}\left| \frac{\partial d_{H}^2(\theta,\theta_0) }{\partial \theta} \right|= \left|\frac{\theta_0}{(T+\theta_0)^2}\left(\sqrt{\frac{T}{\theta_0}}-\sqrt{\frac{\theta_0}{T}}\right)\right| = \left(\frac{\theta_0}{(T+\theta_0)^2}\left(\sqrt{\frac{\theta_0}{T}} -\sqrt{\frac{T}{\theta_0}}\right)\right)$. Now the result follows immediately since  the derivative of $d_{H}^2(\theta,\theta_0)$ is bounded on $\Theta$, which implies that $d_{H}^2(\theta,\theta_0)$ is Lipschitz on $\Theta$.
        \end{proof}
    
    \begin{lemma}\label{lem:LipNV}
        For any $\theta\in \Theta=[T,\infty)$,  and $a\in \sA$,
        \[d_L^{NV}(\theta,\theta_0) \leq \frac{h}{T^2}|\theta-\theta_0| .\]
    \end{lemma}
    \begin{proof}
         Recall, 
        \[R(a,\theta)=ha- \frac{h}{\theta} + (b+h) \frac{e^{-a \theta}}{\theta}.\]
        First, observe that for any $a\in \sA$,
        \begin{align}
            \frac{\partial R(a,\theta)}{\partial \theta} = \frac{h}{\theta^2} - a (b+h) \frac{e^{-a \theta}}{\theta} - (b+h) \frac{e^{-a \theta}}{\theta^2} = \frac{1}{\theta^2} \left( h - (b+h)e^{-a\theta}(1+a\theta) \right) \leq \frac{h}{\theta^2}.
            \label{eq:gradNV1}
        \end{align}
        The result follows immediately, since $\sup_{\theta\in \Theta} \frac{\partial R(a,\theta)}{\partial \theta} \leq \frac{h}{T^2}$. 
        \end{proof}

\begin{proof}{Proof of Lemma~\ref{lem:nv2}}
    
    It follows from Lemma~\ref{lem:HellNV}  that $d_L^{NV}(\theta,\theta_0)$ for any $\theta\in \Theta=[T,\infty)$ and $\theta_0$ lying the  interior of $\Theta$,  satisfies Assumption~\ref{assump:Asf5} with \[K_1= \left[
    \frac{\left( \frac{h}{\theta_0}- \frac{h}{T} \right)^2  + (b+h)^2 \left( \frac{e^{-\underline{a} T}}{T}  -  \frac{e^{- \underline{a} \theta_0}}{\theta_0}\right)^2}{d_{H}^2(T,\theta_0)} \right]^{1/2}:=K_1^{NV}\]. Similarly, it follows from  Lemma and~\ref{lem:LipNV} that for sufficiently small $T>0$, $d_L^{NV}(\theta,\theta_0)$ satisfies Assumption~\ref{assump:Asf4} with  $K_2 = h/T^2 := K_2^{NV}  $. Now using similar arguments as used in Lemma~\ref{lem:cover} and Lemma~\ref{assump:Asf1}, for a given $\e_n>0$ and every $\e> \e_n$, such that $n\e_n^2\geq 1$, it can be shown  that 
    , $L_n^{NV}(\theta,\theta_0) = n \left(\sup_{a \in \mathcal{A}} \vert R(a,\theta) - R(a,\theta_0)  \vert \right)^2 $	satisfies
    \vspace{0em}
    \begin{align}
        \bbE_{P^{n}_0}[ \phi_n ]  & \leq C_0 \exp(-C n \e^2 ), \label{eq:eq12nv1} \\
        \underset{\{ \theta\in \Theta : L_n^{NV}(\theta,\theta_0) \geq C_1n \e^2  \}}{\sup} \bbE_{P^{n}_{\theta}}[ 1- \phi_n ]  & \leq \exp(-C n \e^2 ),\label{eq:eq13nv1}
    \end{align} 
    where $C_0=20$ and $C = \frac{C_1}{2 (K_1^{NV})^2}$ for a constant $C_1>0$.
    \end{proof}

 \begin{proof}{Proof  of Lemma~\ref{lem:nv3}:}

First, we write the R\'enyi divergence between $P_{0}^n$ and $P_{\theta}^n$,
\begin{align*}
    D_{1+\lambda} \left( P_0^n \| P_{\theta}^n \right) = \frac{1}{\lambda} \log \int \left(\frac{dP_0^n}{dP_{\theta}^n}\right)^\lambda dP_0^n & = n \frac{1}{\lambda} \log \int \left(\frac{dP_0}{dP_{\theta}}\right)^\lambda dP_0
    \\
    &= n  \left( \log \frac{\theta_0}{\theta} + \frac{1}{\lambda} \log \frac{\theta_0}{(\lambda + 1)\theta_0 -\lambda \theta } \right), 
\end{align*} 
when  $\left((\lambda + 1)\theta_0 -\lambda \theta\right)>0$ and $D_{1+\lambda} \left( P_0^n \| P_{\theta}^n \right) = \infty$ otherwise. Also, observe that, $D_{1+\lambda} \left( P_0^n \| P_{\theta}^n \right)$ is non-decreasing in $\lambda$ (this also follows from non-decreasing property of the R\'enyi divergence  with respect to $\lambda$). Therefore, observe  that
\begin{align*}
    \Pi(D_{1+\lambda} \left( P_0^n \| P_{\theta}^n \right) \leq C_3n\e_n^2)\geq \Pi(D_{\infty} \left( P_0^n \| P_{\theta}^n \right) \leq C_3n\e_n^2)  &= \Pi\left( 0 \leq \log \frac{\theta_0}{\theta} \leq C_3 \e_n^2\right)  
    \\
    &= \Pi\left( \theta_0e^{-C_3 \e_n^2}\leq {\theta} \leq \theta_0 \right).
\end{align*}

Now, recall that for a set $A\subseteq \Theta=[T,\infty)$, we define $\Pi(A) = \text{Inv}-\Gamma(A\cap \Theta)/\text{Inv}-\Gamma(\Theta) $. Now, observe that for sufficiently small $T$ and large enough $n$,  we have
\[ \Pi\left( \theta_0e^{-C_3 \e_n^2}\leq {\theta} \leq \theta_0 \right) \geq  \text{Inv}-\Gamma\left( \theta_0e^{-C_3 \e_n^2}\leq {\theta} \leq \theta_0 \right) \]
The  cumulative  distribution function of inverse-gamma distribution  is $\text{Inv}-\Gamma(\{\theta\in \Theta: \theta< t \} ):= \frac{\Gamma\left(\alpha,\frac{\beta}{t} \right)}{\Gamma(\alpha)}$, where $\alpha(>0)$ is the  shape parameter, $\beta(>0)$ is the  scale parameter, $\Gamma(\cdot)$ is the Gamma function,  and $\Gamma(\cdot,\cdot)$ is the  incomplete Gamma function. Therefore, it  follows for $\alpha>1$ that
\begin{align} 
    \nonumber
    \text{Inv}-\Gamma&\left( \theta_0e^{-C_3 \e_n^2}\leq {\theta} \leq \theta_0 \right)
    \\
    \nonumber
    & = \frac{\Gamma\left(\alpha,\beta/\theta_0 \right) - \Gamma\left(\alpha,\beta/\theta_0 e^{C_3  \e_n^2}  \right)}{\Gamma(\alpha)} = \frac{\int_{\beta/\theta_0}^{\beta/\theta_0 e^{C_3  \e_n^2}} e^{-x} x^{\alpha-1}dx }{\Gamma(\alpha)}
    \\
    \nonumber
    &\geq \frac{e^{-\beta/\theta_0 e^{C_3  \e_n^2} + \alpha C_3\e_n^2 }}{\alpha \Gamma(\alpha)} \left(  \frac{\beta}{\theta_0}\right)^{\alpha} \left[ 1 - e^{-\alpha C_3  \e_n^2 }  \right] 
    \\
    \nonumber
    & \geq \frac{e^{-\beta/\theta_0 e^{C_3  }  }}{\alpha \Gamma(\alpha)} \left(  \frac{\beta}{\theta_0}\right)^{\alpha} \left[e^{-\alpha{C_3 n \e_n^2 }} \right] 
\end{align}
where the penultimate inequality folows since $0<\e_n^2<1$ and the last inequality follows from the fact  that, 
$ 1 - e^{-\alpha C_3  \e_n^2} \geq e^{-\alpha{C_3 n \e_n^2 }}$, for large enough $n $. Also note that, $ 1 - e^{-\alpha C_3  \e_n^2} \geq e^{-\alpha{C_3 n \e_n^2 }}$ can't hold true for $\e_n^2=1/n$. However, for $\e_n^2=\frac{\log n}{n}$ it holds for any $n\geq 2$ when $\alpha C_3 >2$. Therefore, for inverse-Gamma prior restricted to $\Theta$, $C_2=\alpha C_3$ and any $\lambda>1$ the  result follows for sufficiently large $n$. 

 \end{proof}



\begin{proof}{Proof of Lemma~\ref{lem:nv4}:}
    Recall, 
    \[R(a,\theta)=ha- \frac{h}{\theta} + (b+h) \frac{e^{-a \theta}}{\theta}.\]
    First, observe that for any $a\in \sA$,
    \begin{align}
        \frac{\partial R(a,\theta)}{\partial \theta} = \frac{h}{\theta^2} - a (b+h) \frac{e^{-a \theta}}{\theta} - (b+h) \frac{e^{-a \theta}}{\theta^2} = \frac{1}{\theta^2} \left( h - (b+h)e^{-a\theta}(1+a\theta) \right).
        \label{eq:gradNV}
    \end{align}
    
    Using the above equation the (finite) critical point $\theta^*$ must satisfy, \(h - (b+h)e^{-a\theta^*}(1+a\theta^*) = 0  \). Therefore,
    \[R(a,\theta) \geq R(a,\theta^*) = h \left(a - \frac{1}{\theta^*} + \frac{1}{\theta^*(1+a\theta^*)} \right)  =  \frac{  h a^2 {\theta^*}}{(1+a\theta^*)} . \]
    Since $h,b >0$ and $a\theta^*>0$, hence
    \[R(a,\theta) \geq  \frac{  h \underline{a}^2 { \theta^*}}{(1+a\theta^*)}, \]
    where $\underline{a} := \min \{a\in \mathcal A\}$ and $\underline{a} >0$.
    
    
\end{proof}

 \begin{proof}{Proof of Lemma~\ref{lem:nv5}:}

First, observe that $R(a,\theta)$ is bounded above in $\theta$ for a given $a \in \mathcal{A}$ 
\begin{align*}
R(a,\theta)&= ha- \frac{h}{\theta} + (b+h) \frac{e^{-a \theta}}{\theta}\\
&\leq ha + \frac{b}{\theta}.
\end{align*}
Using the above fact and the Cauchy-Schwarz inequality, we obtain
\begin{align}
\nonumber
		&\int_{\left\{e^{ \gamma R(a,\theta)} > e^{C_4(\gamma)n \e_n^2} \right\} } e^{ \gamma R(a,\theta)} \pi(\theta) d\theta  
		\\
		\nonumber
		& \leq \left( \int  e^{ 2\gamma R(a,\theta)} \pi(\theta) d\theta \right)^{1/2} \left( \int  \Ind{e^{ \gamma R(a,\theta)} > e^{C_4(\gamma)n \e_n^2}  } \pi(\theta) d\theta \right)^{1/2} 
		\\
		\nonumber
		& \leq  \left( \int  e^{2\gamma\left(ha + \frac{b}{\theta} \right) } \pi(\theta) d\theta \right)^{1/2} \left( \int  \Ind{\{e^{\gamma\left(ha + \frac{b}{\theta} \right) } > e^{C_4(\gamma)n \e_n^2} \} } \pi(\theta) d\theta \right)^{1/2}
		\\
		& \leq e^{-{C_4(\gamma)}n \e_n^2} \left( \int  e^{2\gamma\left(ha + \frac{b}{\theta} \right) } \pi(\theta) d\theta \right),
\end{align}
where the last inequality follows from using the Chebyshev's inequality. 

Now  using the definition  of the prior distribution, which is  an inverse gamma prior restricted to $\Theta= [T,\infty)$, we have 
\begin{align}
    \nonumber
    \int_{\left\{e^{ \gamma R(a,\theta)} > e^{C_4(\gamma)n \e_n^2} \right\} } e^{ \gamma R(a,\theta)} \pi(\theta) d\theta &\leq e^{-{C_4(\gamma)}n \e_n^2} \left( \int  e^{2\gamma\left(ha + \frac{b}{\theta} \right) } \pi(\theta) d\theta \right)
    \\
    \nonumber
    &\leq e^{-{C_4(\gamma)}n \e_n^2} e^{2\gamma\left(h\overline{a} + \frac{b}{T} \right) },
    \end{align}
where $\overline{a} := \max \{a\in \mathcal A\}$ and $\overline{a} >0$. Since $n\e_n^2\geq 1$, we must fix $C_4(\gamma)$ such that
$e^{C_4(\gamma)}> e^{2\gamma\left(h \overline{a} + \frac{b}{T} \right) }$, that is $C_4(\gamma) > 2\gamma \left(h\overline{a} + \frac{b}{T}  \right) $ and $C_5(\gamma)= C_4(\gamma) - 2\gamma \left(h \overline{a} + \frac{b}{T}  \right) $.
%

 
 \end{proof}

\begin{proof}{Proof of Lemma~\ref{lem:nv6}:}
    Since family $\sQ$ contains all shifted-gamma distributions, observe that $\{q_n(\cdot) \in \sQ \} \forall n\geq 1$. By definition, $q_n(\theta) = \frac{n^n}{\theta_0^n \Gamma(n)} (\theta-T)^{n-1}e^{-n\frac{(\theta-T)}{\theta_0}}$. Now consider the first term; using the definition of the $\scKL$ divergence it follows that 
    \begin{align}
        \scKL(q_n(\theta)\| \pi(\theta)) = \int_T^{\infty} q_n(\theta) \log (q_n(\theta))d\theta - \int_T^{\infty} q_n(\theta) \log (\pi(\theta))d\theta.
        \label{eq:eqr1c}
    \end{align}  
    Substituting $q_n(\theta)$ in the first term of the equation above and expanding the logarithm term, we obtain
    \begin{align}
        \nonumber
        &\int_T^{\infty} q_n(\theta) \log (q_n(\theta))d\theta
        \\
        \nonumber
        &= (n-1) \int_T^{\infty} \log (\theta-T) \frac{n^n}{\theta_0^n \Gamma(n)} (\theta-T)^{n-1}e^{-n\frac{\theta-T}{\theta_0}} d\theta - n + \log \left( \frac{n^n}{\theta_0^n \Gamma(n)} \right)
        \\
        &= - \log \theta_0 + (n-1) \int_T^{\infty} \log \frac{\theta-T}{\theta_0} \frac{n^n}{\theta_0^n \Gamma(n)} (\theta-T)^{n-1}e^{-n\frac{\theta-T}{\theta_0}} d\theta -n + \log \left( \frac{n^n}{ \Gamma(n)} \right)
        \label{eq:eqr2c}
    \end{align}
    Now consider the second term in the equation above. Substitute $\theta = \frac{t \theta_0}{n}+T$ into the  integral, we have \begin{align}
        \nonumber
        \int_T^{\infty} \log \frac{\theta-T}{\theta_0} \frac{n^n}{\theta_0^n \Gamma(n)} (\theta-T)^{n-1}e^{-n\frac{\theta-T}{\theta_0}} d\theta  &=   \int_0^{\infty} \log \frac{t}{n}  \frac{1}{ \Gamma(n)} t^{n-1}e^{-t} dt
        \\
        &\leq \int \left( \frac{t}{n}-1  \right) \frac{1}{ \Gamma(n)} t^{n-1}e^{-t} dt =0.
        \label{eq:eqr2ac}
    \end{align} 
    Substituting the above result into~\eqref{eq:eqr2c}, we get
    \begin{align}
        \nonumber
        \int_T^{\infty} q_n(\theta) \log (q_n(\theta))d\theta 
        &\leq - \log \theta_0 - n + \log \left( \frac{n^n}{ \Gamma(n)} \right)
        \\
        \nonumber
        &\leq - \log \theta_0 - n  + \log \left( \frac{n^n}{ \sqrt{2\pi n} n^{n-1}e^{-n}} \right) 
        \\
        &= - \log \sqrt{2\pi } \theta_0  + \frac{1}{2}\log n, 
        \label{eq:eqr3c}
    \end{align}
    where the  second inequality uses the fact that $\sqrt{2\pi n} n^{n}e^{-n} \leq n \Gamma(n) $. Recall $\pi(\theta)= \frac{\beta^\alpha}{\Gamma(\alpha)} \theta^{-\alpha-1}e^{-\frac{\beta}{\theta}}$. Now consider the second term in~\eqref{eq:eqr1c}. Using the definition of inverse-gamma prior and expanding the logarithm function, we have
    \begin{align}
        \nonumber
        -&\int_T^{\infty} q_n(\theta) \log (\pi(\theta))d\theta
        \\
        \nonumber
        &=  - \log \left( \frac{\beta^\alpha}{\Gamma(\alpha)} \right) + (\alpha+1) \int_T^{\infty} \log {\theta} \frac{n^n}{\theta_0^n \Gamma(n)} (\theta-T)^{n-1}e^{-n\frac{\theta-T}{\theta_0}} d\theta  + \beta \frac{n}{(n-1)\theta_0}
        \\
        \nonumber
        &=  - \log \left( \frac{\beta^\alpha}{\Gamma(\alpha)} \right) + \int_T^{\infty} \log \frac{\theta}{\theta_0} \frac{n^n}{\theta_0^n \Gamma(n)} (\theta-T)^{n-1}e^{-n\frac{\theta-T}{\theta_0}} d\theta  
        \\ 
        \nonumber
        &\quad + \beta \frac{n}{(n-1)\theta_0} + (\alpha+1)\log \theta_0
        \\
        \nonumber
        &\leq  - \log \left( \frac{\beta^\alpha}{\Gamma(\alpha)} \right) + \int_T^{\infty}  \frac{\theta-T}{\theta_0} \frac{n^n}{\theta_0^n \Gamma(n)} (\theta-T)^{n-1}e^{-n\frac{\theta-T}{\theta_0}} d\theta  
        \\ 
        \nonumber
        &\quad + \beta \frac{n}{(n-1)\theta_0} + (\alpha+1)\log \theta_0
        \\
        &= - \log \left( \frac{\beta^\alpha}{\Gamma(\alpha)} \right) +\beta \frac{n}{(n-1)\theta_0} + (\alpha+1)\log \theta_0,
        \label{eq:eqr4c}
    \end{align} 
    where the first inequality is  due to fact that $\bbE_{q_n}[\beta/\theta] \leq \bbE_{q_n}[\beta/(\theta-T)] $ for any $\theta>T$ and the penultimate inequality follows from the  observation  in~\eqref{eq:eqr2ac} and the fact that $\log\frac{\theta}{\theta_0} \leq \frac{\theta}{\theta_0} -  1 \leq \frac{\theta}{\theta_0} -\frac{T}{\theta_0}$ for any $\theta_0
    >T$. Substituting~\eqref{eq:eqr4c} and~\eqref{eq:eqr3c} into~\eqref{eq:eqr1c} and dividing either sides by $n$, we obtain
    \begin{align}
        \nonumber
        \frac{1}{n}&\scKL(q_n(\theta)\| \pi(\theta)) 
        \\
        \nonumber
        &\leq  \frac{1}{n}\left( - \log \sqrt{2\pi } \theta_0  + \frac{1}{2}\log n - \log \left( \frac{\beta^\alpha}{\Gamma(\alpha)} \right)   + \beta \frac{n}{(n-1)\theta_0} +  (\alpha+1)\log \theta_0 \right)
        \\
        &= \frac{1}{2}\frac{\log n}{n} + \beta \frac{1}{(n-1)\theta_0} + \frac{1}{n}\left( - \log \sqrt{2\pi }    - \log \left( \frac{\beta^\alpha}{\Gamma(\alpha)} \right)  + (\alpha)\log \theta_0 \right).
        \label{eq:eqr5c}
    \end{align}  
    Now consider the second term in the assertion of the lemma. Since $\xi_i, i\in\{1,2\ldots n\}$ are independent and identically distributed, we obtain 
    \begin{align*}
        \frac{1}{n} \bbE_{q_n(\theta)} \left[ \scKL\left(  dP^{n}_0\| p(\nX|\theta) \right) \right]   =  \bbE_{q_n(\theta)} \left[ \scKL\left(dP_0\| p(\xi|\theta) \right) \right]  
    \end{align*}
    \sloppy
    Now using the expression  for \scKL~ divergence between the two exponential distributions, we have
    \begin{align}
    \nonumber
        \frac{1}{n} \bbE_{q_n(\theta)} \left[ \scKL\left(  dP^{n}_0\| p(\nX|\theta) \right) \right] & = \int_T^{\infty} \left(\log \frac{\theta_0}{\theta}  + \frac{\theta}{\theta_0} -1\right) \frac{n^n}{\theta_0^n \Gamma(n)} (\theta-T)^{n-1}e^{-n\frac{\theta-T}{\theta_0}} d\theta \\
        &\leq  \frac{n}{n-1} + 1 - 2  = \frac{1}{n-1},
        \label{eq:eqr6c}
    \end{align}
    where second inequality uses the fact that $\log x \leq x-1\leq x-\frac{T}{\theta_0}$ for $\theta_0>T$.
    Combined together~\eqref{eq:eqr6c} and~\eqref{eq:eqr5c} for $n \geq 2$ implies that
    \begin{align}
        \nonumber
        \frac{1}{n} &\left[ \scKL\left(q_n(\theta)\|\pi(\theta) \right) + \bbE_{q_n(\theta)} \left[ \scKL\left(dP^{n}_0)\| p(\nX|\theta) \right) \right]  \right] 
        \\
        &\leq \frac{1}{2} \frac{\log n}{n} +  \frac{1}{n}\left( 2+  \frac{2\beta}{\theta_0}  - \log \sqrt{2\pi }    - \log \left( \frac{\beta^\alpha}{\Gamma(\alpha)} \right)  + \alpha\log \theta_0 \right) \leq C_9 \frac{\log n }{n}. 
    \end{align}
    where $C_9:= \frac{1}{2} + \max \left(0,  {2}+  \frac{2\beta}{\theta_0}  - \log \sqrt{2\pi }    - \log \left( \frac{\beta^\alpha}{\Gamma(\alpha)} \right)  + \alpha\log \theta_0 \right)$ and the  result follows.
\end{proof}

\begin{proof}{Proof of Lemma~\ref{lem:nv6}:}
Since family $\sQ$ contains all gamma distributions, observe that $\{q_n(\cdot) \in \sQ \} \forall n\geq 1$. By definition, $q_n(\theta) = \frac{n^n}{\theta_0^n \Gamma(n)} \theta^{n-1}e^{-n\frac{\theta}{\theta_0}}$. Now consider the first term; using the definition of the $\scKL$ divergence it follows that 
\begin{align}
	\scKL(q_n(\theta)\| \pi(\theta)) = \int q_n(\theta) \log (q_n(\theta))d\theta - \int q_n(\theta) \log (\pi(\theta))d\theta.
	\label{eq:eqr1}
 	\end{align}  
Substituting $q_n(\theta)$ in the first term of the equation above and expanding the logarithm term, we obtain
\begin{align}
	\nonumber
	\int q_n(\theta) \log (q_n(\theta))d\theta &= (n-1) \int \log \theta \frac{n^n}{\theta_0^n \Gamma(n)} \theta^{n-1}e^{-n\frac{\theta}{\theta_0}} d\theta - n + \log \left( \frac{n^n}{\theta_0^n \Gamma(n)} \right)
	\\
	= - \log \theta_0 + (n-1) &\int \log \frac{\theta}{\theta_0} \frac{n^n}{\theta_0^n \Gamma(n)} \theta^{n-1}e^{-n\frac{\theta}{\theta_0}} d\theta -n + \log \left( \frac{n^n}{ \Gamma(n)} \right)
	\label{eq:eqr2}
	\end{align}
Now consider the second term in the equation above. Substitute $\theta = \frac{t \theta_0}{n}$ into the  integral, we have \begin{align}
	\nonumber
	 \int \log \frac{\theta}{\theta_0} \frac{n^n}{\theta_0^n \Gamma(n)} \theta^{n-1}e^{-n\frac{\theta}{\theta_0}} d\theta  &=   \int \log \frac{t}{n}  \frac{1}{ \Gamma(n)} t^{n-1}e^{-t} dt
	 \\
	 &\leq \int \left( \frac{t}{n}-1  \right) \frac{1}{ \Gamma(n)} t^{n-1}e^{-t} dt =0.
	 \label{eq:eqr2a}
	 \end{align} 
Substituting the above result into~\eqref{eq:eqr2}, we get
\begin{align}
	\nonumber
	\int q_n(\theta) \log (q_n(\theta))d\theta 
	&\leq - \log \theta_0 - n + \log \left( \frac{n^n}{ \Gamma(n)} \right)
	\\
	\nonumber
	&\leq - \log \theta_0 - n  + \log \left( \frac{n^n}{ \sqrt{2\pi n} n^{n-1}e^{-n}} \right) 
	\\
	&= - \log \sqrt{2\pi } \theta_0  + \frac{1}{2}\log n, 
	\label{eq:eqr3}
\end{align}
where the  second inequality uses the fact that $\sqrt{2\pi n} n^{n}e^{-n} \leq n \Gamma(n) $. Recall $\pi(\theta)= \frac{\beta^\alpha}{\Gamma(\alpha)} \theta^{-\alpha-1}e^{-\frac{\beta}{\theta}}$. Now consider the second term in~\eqref{eq:eqr1}. Using the definition of inverse-gamma prior and expanding the logarithm function, we have
\begin{align}
	\nonumber
		-\int & q_n(\theta) \log (\pi(\theta))d\theta 
		\\
		\nonumber
		&=  - \log \left( \frac{\beta^\alpha}{\Gamma(\alpha)} \right) + (\alpha+1) \int \log {\theta} \frac{n^n}{\theta_0^n \Gamma(n)} \theta^{n-1}e^{-n\frac{\theta}{\theta_0}} d\theta  + \beta \frac{n}{(n-1)\theta_0}
		\\
		\nonumber
		&=  - \log \left( \frac{\beta^\alpha}{\Gamma(\alpha)} \right) + (\alpha+1) \int \log \frac{\theta}{\theta_0} \frac{n^n}{\theta_0^n \Gamma(n)} \theta^{n-1}e^{-n\frac{\theta}{\theta_0}} d\theta  
		\\ 
		\nonumber
		&\quad + \beta \frac{n}{(n-1)\theta_0} + (\alpha+1)\log \theta_0
		\\
		&\leq - \log \left( \frac{\beta^\alpha}{\Gamma(\alpha)} \right)  + \beta \frac{n}{(n-1)\theta_0} + (\alpha+1)\log \theta_0,
		\label{eq:eqr4}
	\end{align} 
where the last inequality follows from the  observation  in~\eqref{eq:eqr2a}. Substituting~\eqref{eq:eqr4} and~\eqref{eq:eqr3} into~\eqref{eq:eqr1} and dividing either sides by $n$, we obtain
\begin{align}
	\nonumber
	\frac{1}{n}&\scKL(q_n(\theta)\| \pi(\theta)) 
	\\
	\nonumber
	&\leq  \frac{1}{n}\left( - \log \sqrt{2\pi } \theta_0  + \frac{1}{2}\log n - \log \left( \frac{\beta^\alpha}{\Gamma(\alpha)} \right)  + \beta \frac{n}{(n-1)\theta_0} + (\alpha+1)\log \theta_0 \right)
	\\
	=& \frac{1}{2}\frac{\log n}{n} + \beta \frac{1}{(n-1)\theta_0} + \frac{1}{n}\left( - \log \sqrt{2\pi }    - \log \left( \frac{\beta^\alpha}{\Gamma(\alpha)} \right)  + (\alpha)\log \theta_0 \right).
	\label{eq:eqr5}
\end{align}  
Now, consider the second term in the assertion of the lemma. Since, $\xi_i, i\in\{1,2\ldots n\}$ are independent and identically distributed, we obtain 
\begin{align*}
\frac{1}{n} \bbE_{q(\theta)} \left[ \scKL\left(  dP^{n}_0\| p(\nX|\theta) \right) \right]   =  \bbE_{q_n(\theta)} \left[ \scKL\left(dP_0\| p(\xi|\theta) \right) \right]  
\end{align*}
Now using the expression  for \scKL~ divergence between the two exponential distributions, we have
\begin{align}
\nonumber
	\frac{1}{n}& \bbE_{q(\theta)} \left[ \scKL\left(  dP^{n}_0\| p(\nX|\theta) \right) \right]  = \int \left(\log \frac{\theta_0}{\theta}  + \frac{\theta}{\theta_0} -1\right) \frac{n^n}{\theta_0^n \Gamma(n)} \theta^{n-1}e^{-n\frac{\theta}{\theta_0}} d\theta 
	\\
	&\leq  \frac{n}{n-1} + 1 - 2  = \frac{1}{n-1},
	\label{eq:eqr6}
	\end{align}
where second inequality uses the fact that $\log x \leq x-1$.
Combined together~\eqref{eq:eqr6} and~\eqref{eq:eqr5} for $n \geq 2$ implies that
\begin{align}
	\nonumber
	\frac{1}{n} &\left[ \scKL\left(q(\theta)\|\pi(\theta) \right) + \bbE_{q(\theta)} \left[ \scKL\left(dP^{n}_0)\| p(\nX|\theta) \right) \right]  \right] 
	\\
	&\leq \frac{1}{2} \frac{\log n}{n} +  \frac{1}{n}\left( 2+  \frac{2\beta}{\theta_0}  - \log \sqrt{2\pi }    - \log \left( \frac{\beta^\alpha}{\Gamma(\alpha)} \right)  + \alpha\log \theta_0 \right) \leq C_9 \frac{\log n }{n}. 
	\end{align}
where $C_9:= \frac{1}{2} + \max \left(0,  {2}+  \frac{2\beta}{\theta_0}  - \log \sqrt{2\pi }    - \log \left( \frac{\beta^\alpha}{\Gamma(\alpha)} \right)  + \alpha\log \theta_0 \right)$ and the  result follows.
 \end{proof}

\subsection{Multi-product Newsvendor problem}
In the multi-dimensional newsvendor problem,
we fix $n^{-1/2}\sqrt{L^{MNV}_n(\theta,\theta_0)} =  (\sup_{a \in  \mathcal{A}} | R(a,\theta) - R(a,\theta_0) |)$, where 
$ R(a,\theta)=\sum_{i=1}^{d} \Big[(h_i+b_i)a_i \Phi(a_i) - b_i a_i + \theta_{i} (b_i-h_i) $ $ + \sigma_{ii} \left[ h\frac{\phi((a_i-\theta_{i})/\sigma_{ii})}{\Phi((a_i-\theta_{i})/\sigma_{ii})} + b\frac{\phi((a_i-\theta_{i})/\sigma_{ii})}{1-\Phi((a_i-\theta_{i})/\sigma_{ii})} \right] \Big] .$

For brevity, we denote $  d_{L}^{MNV}(\theta,\theta_0)= n^{-1/2}\sqrt{L^{MNV}_n(\theta,\theta_0)}$.  First, we show that 

\begin{lemma}~\label{lem:LipMNV}
    For any compact decision space $ \sA$  and compact model space  $\Theta$, 
    \[
    d_{L}^{MNV}(\theta,\theta_0) \leq K \|\theta-\theta_0 \|,
    \]
    for a  constant $K$ depending on compact sets $\sA$ and $\Theta$  and given $b,h$ and $\Sigma$. 
    \end{lemma}
\begin{proof}
    Observe that
    \begin{align}
        \nonumber
        &\partial_{\theta_{i}} R(a,\theta)
        \\
        \nonumber
        &=   (b_i-h_i)  + (a_i-\theta_{i}) / \sigma_{ii} \phi((a_i-\theta_{i})/\sigma_{ii}) \left[ \frac{h}{\Phi((a_i-\theta_{i})/\sigma_{ii})} + \frac{b}{1-\Phi((a_i-\theta_{i})/\sigma_{ii})} \right] 
        \\
        \nonumber
        &  + \sigma_{ii} \phi\left(\frac{(a_i-\theta_{i})}{\sigma_{ii}}\right) \left[ \frac{h\phi((a_i-\theta_{i})/\sigma_{ii})}{\sigma_{ii} \Phi((a_i-\theta_{i})/\sigma_{ii})^2} - \frac{b\phi((a_i-\theta_{i})/\sigma_{ii})}{\sigma_{ii}(1-\Phi((a_i-\theta_{i})/\sigma_{ii}))^2} \right] 
        \\ 
        \nonumber
        &=   (b_i-h_i)  + (a_i-\theta_{i}) / \sigma_{ii} \phi((a_i-\theta_{i})/\sigma_{ii}) \left[ \frac{h}{\Phi((a_i-\theta_{i})/\sigma_{ii})} + \frac{b}{1-\Phi((a_i-\theta_{i})/\sigma_{ii})} \right] 
        \\
        &   + \phi\left(\frac{(a_i-\theta_{i})}{\sigma_{ii}}\right) \left[ \frac{h\phi((a_i-\theta_{i})/\sigma_{ii})}{ \Phi((a_i-\theta_{i})/\sigma_{ii})^2} - \frac{b\phi((a_i-\theta_{i})/\sigma_{ii})}{(1-\Phi((a_i-\theta_{i})/\sigma_{ii}))^2} \right] .
    \end{align}

    Since, $\sA$ and $\Theta$  are compact sets, therefore $\{(a_i-\theta_i)/\sigma_{ii}\}_{i=1}^d$ lie in  a compact set. Consequently, $\phi((a_i-\theta_{i})/\sigma_{ii})$ and $\Phi((a_i-\theta_{i})/\sigma_{ii})$  also lie in bounded subset of $\bbR$ and thus $\sup_{\sA,\Theta} \|\partial_{\theta_{i}} R(a,\theta)\| \leq K$ for a given $b$, $h$ and $\Sigma$. Since , the norm of the derivative of $R(a,\theta)$ is bounded on $\Theta$ for any $a\in \sA$, therefore, $d_{L}^{MNV}(\theta,\theta_0)$
    is uniformly Lipschitz in $\sA$ with Lipschitz constant $K$, that  is 
    \[
    d_{L}^{MNV}(\theta,\theta_0) \leq K \|\theta-\theta_0 \|.
    \]
    \end{proof}

Next, we show that the $P_{\theta}$ satisfies Assumption~\ref{assump:Asf1}, for distance function $L^{MNV}_n(\theta,\theta_0)$. 
\begin{proof}{Proof of Lemma~\ref{lem:mnv2}:}

 First consider the following test function, constructed using $\nX= \{\xi_1, \xi_2, \ldots,\xi_n \}$.
\vspace{0em}
\[ \phi_{n,\e} := \mathbbm{1}_{ \left\{ \nX:  \left\|\hat \theta_n - \theta_0 \right\| >  \sqrt{C\e^2} \right\} },  \]
where $\hat \theta_n = \frac{\sum_{i=1}^{n}{\xi_i}}{n}$. Note that $\hat \theta_n - \theta_0  \sim \mathcal{N} (\cdot|0,\frac{1}{n}\Sigma)$, where $\frac{1}{n}\Sigma$ is a symmetric positive definite matrix. Therefore it can be decomposed as $\Sigma = Q^T\Lambda Q$, where $Q$ is an orthogonal matrix and $\Lambda$ is a daigonal matrix consisting of respective eigen values and consequently $\hat \theta_n - \theta_0 \sim Q \mathcal{N} (\cdot|0,\frac{1}{n}\Lambda)$. So, we have $\|\hat \theta_n - \theta_0\|^2 \sim \|\mathcal{N} (\cdot|0,\frac{1}{n}\Lambda)\|^2 $. Notice that $\|\mathcal{N} (\cdot|0,\frac{1}{n}\Lambda)\|^2$ is a linear combination of $d$ $\chi^2_{(1)}$ random variable weighted by elements of the diagonal matrix $\frac{1}{n}\Lambda$. 
Using this observation, we first verify that $\phi_{n,\e}$ satisfies condition (\textit{i}) of the Lemma. Observe that 
\begin{align}
\nonumber
	\bbE_{P^{n}_0}[ \phi_n ] = P^{n}_0 \left(  \nX:  \left\|\hat \theta_n - \theta_0 \right\|^2 >  {C\e^2}  \right) & = P^{n}_0 \left(  \nX:  \|\mathcal{N} (\cdot|0,\Lambda)\|^2>  {Cn\e^2}  \right).
\end{align}
Note that $\chi^2_{(1)}$ is $\Gamma$ distributed with shape $1/2$ and scale $2$, which implies $\chi^2_{(1)} - 1$ is a sub-gamma random variable with scale factor $2$ and variance factor $2$. Now observe  that for $\hat \Lambda = \max_{i\in \{1,2,\ldots d\} } 
\Lambda_{ii}$, 
\begin{align}  
    \nonumber
    P^{n}_0 \left(  \nX:  \|\mathcal{N} (\cdot|0,\Lambda)\|^2>  {Cn\e^2}  \right) &\leq   P^{n}_0 \left(  \nX:  \chi^2_{(1)}>  \frac{1}{d\hat \Lambda}{Cn\e^2}  \right)
    \\
    \nonumber
    &\leq P^{n}_0 \left(  \nX:  \chi^2_{(1)}>  \frac{1}{d\hat \Lambda}{Cn\e^2}  \right)
    \\
    \nonumber
    &= P^{n}_0 \left(  \nX:  \chi^2_{(1)}-1>  \frac{1}{d\hat \Lambda}{Cn\e^2} -1 \right)
    \\
    \nonumber
    &\leq e^{-\frac{ \left(\frac{1}{d\hat \Lambda}{Cn\e^2} -1\right)^{2}}{2\left(2+2\left(\frac{1}{d\hat \Lambda}{Cn\e^2} -1\right)\right)}  } 
    \\
    &\leq e^{-1/8 \frac{1}{d\hat \Lambda}{Cn\e^2} +1/8 } \leq e^{-1/8  \left(\frac{C}{d\hat \Lambda} -1\right){n\e^2} },
    \label{eq:eqmnv1}
\end{align}
where in  the third inequality we used the well known tail bound for sub-gamma random variable (Lemma 3.12~\cite{Lugosi}) assuming that $C$ is sufficiently large such that $\left(\frac{1}{d\hat \Lambda}{Cn\e^2} -1\right)>1$ and in the last inequality follows from the assumption that $n\e^2>n\e_n^2 \geq 1$.

Now, we fix the alternate set to be $\{\theta \in \bbR^d: \|\theta - \theta_0 \|\geq 2 \sqrt{C \e^2}\}$. Next, we verify that  $\phi_{n,\e}$ satisfies condition (\textit{ii}) of the lemma.  First, observe that   
\begin{align}
    \bbE_{P^{n}_{\theta}}[ 1-\phi_n ] = P^{n}_{\theta} \left(  \nX:  \left\|\hat \theta_n - \theta_0 \right\|^2 \leq  {C\e^2}  \right) & \leq  P^{n}_\theta \left(  \nX:  \|\hat \theta_n - \theta \| \geq \|\theta - \theta_0 \| - \sqrt{C\e^2}  \right),
\end{align}
where in the last inequality, we used the fact that $\| \theta - \theta_0\| \leq \|\hat \theta_n - \theta \| + \left\|\hat \theta_n - \theta_0 \right\|  $. Now on alternate set $\{\theta \in \bbR^d: \|\theta - \theta_0 \|\geq 2\sqrt{C \e^2}\}$,
 \begin{align}
     \nonumber
     \bbE_{P^{n}_{\theta}}[ 1-\phi_n ] & \leq  P^{n}_\theta \left(  \nX:  \|\hat \theta_n - \theta \| \geq \|\theta - \theta_0 \| - \sqrt{C\e^2}  \right)
     \\
     \nonumber
     &\leq P^{n}_\theta \left(  \nX:  \|\hat \theta_n - \theta \| \geq \|\theta - \theta_0 \| - \sqrt{C\e^2}  \right)
     \\
     &\leq P^{n}_\theta \left(  \nX:  \|\hat \theta_n - \theta \| \geq  \sqrt{C\e^2}  \right).
 \end{align}

Now, it follows from~\eqref{eq:eqmnv1} and $\Theta\subset \bbR^d $ that
 \begin{align*}
     \bbE_{P^{n}_0}[ \phi_n ]  & \leq e^{-1/8  \left(\frac{C}{d\hat \Lambda} -1\right){n\e^2} },  \\
     \underset{\{\theta \in \Theta: \|\theta - \theta_0 \|\geq 2\sqrt{C \e^2}\}}{\sup} \bbE_{P^{n}_{\theta}}[ 1- \phi_n ] \leq \underset{\{\theta \in \bbR^d: \|\theta - \theta_0 \|\geq 2\sqrt{C \e^2}\}}{\sup} \bbE_{P^{n}_{\theta}}[ 1- \phi_n ]  & \leq e^{-1/8  \left(\frac{C}{d\hat \Lambda} -1\right){n\e^2} }.
 \end{align*} 
 
 Using Lemma~\ref{lem:LipMNV}, 
 \( \{\theta \in \Theta: n^{-1/2}\sqrt{L^{MNV}_n(\theta,\theta_0)} \geq 2 K\sqrt{C \e^2}\}   = \{\theta \in \Theta: d_L^{MNV}(\theta,\theta_0) \geq 2 K\sqrt{C \e^2}\} \subseteq \{\theta \in \Theta: \|\theta - \theta_0 \|\geq 2\sqrt{C \e^2}\}, \)
which  implies that 
 \[ \underset{\{\theta \in \Theta: {L^{MNV}_n(\theta,\theta_0)} \geq 4 K^2 {C n\e^2}\}}{\sup} \bbE_{P^{n}_{\theta}}[ 1- \phi_n ] \leq \underset{\{\theta \in \Theta: \|\theta - \theta_0 \|\geq 2\sqrt{C \e^2}\}}{\sup} \bbE_{P^{n}_{\theta}}[ 1- \phi_n ] .\]
 
 Therefore, $P_{\theta}$ for $\theta\in \Theta$, satisifes Assumptions~\ref{assump:Asf1} for $L_n(\theta,\theta_0)= L_n^{MNV}(\theta,\theta_0)$ for $C_0=1$, $C_1= 4 K^2 C$ and $C= 1/8  \left(\frac{C}{d\hat \Lambda} -1\right) $.
\end{proof}

\begin{proof}{Proof  of Lemma~\ref{lem:mnv3}:}
    
    First, we write the R\'enyi divergence between two multivariate Gaussian distribution with known $\Sigma$ as
    \begin{align}
        D_{1+\lambda}( {\mathcal{N}(\cdot|\theta_0)}\|{\mathcal{N}(\cdot|\theta)}  ) = \frac{\lambda+1}{2} {(\theta-\theta_0)^T{\Sigma}(\theta-\theta_0)},
    \end{align}
    and $D_{1+\lambda}( {\mathcal{N}(\cdot|\theta)}\|{\mathcal{N}(\cdot|\theta_0)}  )<\infty$ if and only if $\Sigma^{-1}$ is positive definite ~\citep{gil2013renyi}. 

Since, we assumed that the sequence of models are iid, therefore, 
    \( D_{1+\lambda} \left( P_0^n \| P_{\theta}^n \right) = \frac{1}{\lambda} \log \int \left(\frac{dP_0^n}{dP_{\theta}^n}\right)^\lambda dP_0^n = n \frac{1}{\lambda} \log \int \left(\frac{dP_0}{dP_{\theta}}\right)^\lambda dP_0 = n  \left( \frac{\lambda+1}{2} {(\theta-\theta_0)^T{\Sigma}(\theta-\theta_0)} \right), \)
    when  $\Sigma^{-1}$ is positive definite and $D_{1+\lambda} \left( P_0^n \| P_{\theta}^n \right) = \infty$ otherwise. Now observe  that
    \begin{align}
        \nonumber
        \Pi(D_{1+\lambda} \left( P_0^n \| P_{\theta}^n \right) &\leq n  C_3 \e_n^2 )  = \Pi\left( \left(  {(\theta-\theta_0)^T{\Sigma}(\theta-\theta_0)} \right) \leq \frac{2}{\lambda+1} C_3 \e_n^2\right)
        \\
        \nonumber
        &  = \Pi\left( \left(  {[(\theta-\theta_0)Q]^T{\Lambda}[Q(\theta-\theta_0)]} \right) \leq \frac{2}{\lambda+1} C_3 \e_n^2\right)
        \\
        \nonumber
        &\geq \Pi\left( \left(  {[(\theta-\theta_0)Q]^T[Q(\theta-\theta_0)]} \right) \leq \frac{2}{\hat \Lambda(\lambda+1)} C_3 \e_n^2\right), 
        \\
         &=  \Pi\left( \left(  {[(\theta-\theta_0)]^T[(\theta-\theta_0)]} \right) \leq \frac{2}{\hat \Lambda(\lambda+1)} C_3 \e_n^2\right),    
        \end{align}    
    where $\hat \Lambda = \max_{i\in \{1,2,\ldots d\} } 
    \Lambda_{ii}$ and in  the second equality we used eigen value decomposition of $\Sigma=Q^T\Lambda Q$. Next, observe that,
    \begin{align}
        \nonumber
        \Pi(D_{1+\lambda} \left( P_0^n \| P_{\theta}^n \right) \leq n  C_3 \e_n^2 )  
        & =  \Pi\left( \left(  {[(\theta-\theta_0)]^T[(\theta-\theta_0)]} \right) \leq \frac{2}{\hat \Lambda(\lambda+1)} C_3 \e_n^2\right)
        \\
        \nonumber
        &= \Pi\left(  {\|(\theta-\theta_0)\|}  \leq \sqrt{\frac{2}{\hat \Lambda(\lambda+1)} C_3 \e_n^2}\right)
        \\
        \nonumber
        &\geq \Pi\left(  {\|(\theta-\theta_0)\|_{\infty}}  \leq \sqrt{\frac{2}{\hat \Lambda(\lambda+1)} C_3 \e_n^2}\right)
        \\
        \nonumber
        &= \prod_{i=1}^{d}\Pi_i\left(  {|(\theta_i-\theta_0^i)|}  \leq \sqrt{ \frac{2}{\hat \Lambda(\lambda+1)} C_3 \e_n^2} \right),
    \end{align}   
    where in the last equality we used the  fact that the prior distribution is uncorrelated.
    Now, the  result follows immediately for sufficiently large $n$, if the prior  distribution is uncorrelated and uniformly distributed on  the  compact  set $\Theta_i$, for each $i\in \{1,2,\ldots,d\}$ . In particular observe that for large enough $n$, we have
    \begin{align*}
        \Pi(D_{1+\lambda} \left( P_0^n \| P_{\theta}^n \right) &\leq n  C_3 \e_n^2 )  \geq  \prod_{i=1}^{d} \frac{\theta_0^i + \sqrt{\frac{2}{\hat \Lambda(\lambda+1)} C_3 \e_n^2} - \theta_0^i + \sqrt{\frac{2}{\hat \Lambda(\lambda+1)} C_3 \e_n^2} }{m(\Theta_i)} 
        \\
        &=
        \frac{2^d \left(\frac{2}{\hat \Lambda(\lambda+1)} C_3 \e_n^2 \right)^{d/2} }{\prod_{i=1}^{d} m(\Theta_i)} =  \left(\frac{8}{ (\hat \Lambda(\lambda+1))}\left(\prod_{i=1}^{d} m(\Theta_i)\right)^{-2/d}  C_3 \e_n^2 \right)^{d/2},
    \end{align*}   
where $m(A)$ is the Lebesgue measure (volume) of any set $A\subset \bbR$. 
Now if $\e_n^2=\frac{\log n }{n}$, then for  $\frac{8}{\hat \Lambda(\lambda+1)\left(\prod_{i=1}^{d} m(\Theta_i)\right)^{2/d}}  C_3 >2$, $\frac{8}{\hat \Lambda(\lambda+1)\left(\prod_{i=1}^{d} m(\Theta_i)\right)^{2/d}}  C_3 \e_n^2  \geq e^{-\frac{8}{\hat \Lambda(\lambda+1)\left(\prod_{i=1}^{d} m(\Theta_i)\right)^{2/d}}  C_3 n\e_n^2}$  for all $n\geq 2$, therefore,
\begin{align}
    \nonumber
    \Pi(D_{1+\lambda} \left( P_0^n \| P_{\theta}^n \right) \leq n  C_3 \e_n^2 )  &\geq e^{-\frac{4d}{\hat \Lambda(\lambda+1)\left(\prod_{i=1}^{d} m(\Theta_i)\right)^{2/d}}  C_3 n\e_n^2}.
    \end{align}
    \end{proof}

\begin{proof}{Proof of Lemma~\ref{lem:mnv6}:}
    Since family $\sQ$ contains all uncorrelated Gaussian  distributions restricted  to $\Theta$, observe that $\{q_n(\cdot) \in \sQ \} \forall n\geq 1$. By definition, $q_n^i(\theta) \propto \frac{1}{\sqrt{2\pi \sigma_{i,n}^2}} e^{-\frac{1}{2\sigma_{i,n}^2}(\theta-\mu_{i,n})^2} \Ind{\Theta_i} =  \frac{\mathcal{N}(\theta_i| \mu_{i,n}, \sigma_{i,n})\Ind{\Theta_i}}{\mathcal{N}(\Theta_i| \mu_{i,n}, \sigma_{i,n})} $ and  fix $\sigma_{i,n}=1/\sqrt{n}$ and  $\theta_{i} = \theta_0^i$  for all $i\in \{1,2,\ldots,d\}$. Now consider the first term; using the definition of the $\scKL$ divergence it follows that 
    \begin{align}
        \scKL(q_n(\theta)\| \pi(\theta)) = \int q_n(\theta) \log (q_n(\theta))d\theta - \int q_n(\theta) \log (\pi(\theta))d\theta.
        \label{eq:mnv1c}
    \end{align}  
    Substituting $q_n(\theta)$ in the first term of the equation above and expanding the logarithm term, we obtain
    \begin{align}
        \nonumber
        \int q_n(\theta) \log (q_n(\theta))d\theta &= \sum_{i=1}^{d}  \int q_n^i(\theta_i) \log (q_n^i(\theta_i))d\theta_i
        \\
        \nonumber
        &\leq \sum_{i=1}^{d}  \int \mathcal{N}(\theta_i| \mu_{i,n}, \sigma_{i,n}) \log \mathcal{N}(\theta_i| \mu_{i,n}, \sigma_{i,n}) d\theta_i 
        \\
        &= - \sum_{i=1}^{d}  [\log (\sqrt{2\pi e} ) + \log \sigma_{i,n}],
        \label{eq:mnv2c}
    \end{align}
    where in  the last  equality, we used the well known expression for  the differential entropy of Gaussian distributions.
     Recall $\pi(\theta)= \prod_{i=1}^{d} \frac{1}{m(\Theta_i)} $. Now consider the second term in~\eqref{eq:mnv1c}. It is straightforward to observe that,
    \begin{align}
        -\int q_n(\theta) \log (\pi(\theta))d\theta &=   \sum_{i=1}^{d} \log(m(\Theta_i)) .
        \label{eq:mnv4c}
    \end{align} 
     Substituting~\eqref{eq:mnv4c} and~\eqref{eq:mnv2c} into~\eqref{eq:mnv1c} and dividing either sides by $n$ and substituting $\sigma_{i,n}$, we obtain
    \begin{align}
        \nonumber
        \frac{1}{n}\scKL(q_n(\theta)\| \pi(\theta)) &\leq   - \frac{1}{n}\sum_{i=1}^{d}  [\log (\sqrt{2\pi e} ) -\log(m(\Theta_i))  -\frac{1}{2} \log n]
        \\
        &= \frac{d}{2}\frac{\log n}{n} - \frac{1}{n}\sum_{i=1}^{d}  [\log (\sqrt{2\pi e} ) -\log(m(\Theta_i))  ].
        \label{eq:mnv5c}
    \end{align}  
    Now, consider the second term in the assertion of the lemma. Since $\xi_i, i\in\{1,2\ldots n\}$ are independent and identically distributed, we obtain 
    \begin{align*}
        \frac{1}{n} \bbE_{q_n(\theta)} \left[ \scKL\left(  dP^{n}_0\| p(\nX|\theta) \right) \right]   =  \bbE_{q_n(\theta)} \left[ \scKL\left(dP_0\| p(\xi|\theta) \right) \right]  
    \end{align*}
    Now using the expression  for \scKL~ divergence between the two multivariate Gaussian distributions, we have
    \begin{align}
        \nonumber
        \frac{1}{n} \bbE_{q_n(\theta)} \left[ \scKL\left(  dP^{n}_0\| p(\nX|\theta) \right) \right] & = \frac{1}{2}\bbE_{q_n(\theta)} \left[ (\theta-\theta_0)^{T}\Sigma^{-1}(\theta-\theta_0) \right]
        \\
        \nonumber
        &\leq   \frac{\check \Lambda^{-1}}{2}\bbE_{q_n(\theta)} \left[ (\theta-\theta_0)^{T}(\theta-\theta_0) \right]
        \\
        &\leq \frac{d}{n} \frac{\check \Lambda^{-1}}{2}
        \label{eq:mnv6c}
    \end{align}
    where $\check \Lambda = \min_{i\in \{1,2,\ldots d\} } \Lambda_{ii}$, and $\Sigma^{-1} = Q^T\Lambda^{-1} Q$, where $Q$ is an orthogonal matrix and $\Lambda$ is a daigonal matrix consisting of the respective eigen values of $\Sigma$.
    Combined together~\eqref{eq:mnv6c} and~\eqref{eq:mnv5c} implies that
    \begin{align}
        \nonumber
        \frac{1}{n} &\left[ \scKL\left(q_n(\theta)\|\pi(\theta) \right) + \bbE_{q_n(\theta)} \left[ \scKL\left(dP^{n}_0)\| p(\nX|\theta) \right) \right]  \right] 
        \\
        &\leq \frac{d}{2}\frac{\log n}{n} - \frac{1}{n}\sum_{i=1}^{d}  [\log (\sqrt{2\pi e} ) -\log(m(\Theta_i))  ] + \frac{d}{n} \frac{\check \Lambda^{-1}}{2}  \leq C_9 \frac{\log n }{n}. 
    \end{align}
    where $C_9:= \frac{d}{2} + \max \left(0,   -\sum_{i=1}^{d}  [\log (\sqrt{2\pi e} ) -\log(m(\Theta_i))  ] + \frac{d}{2} {\check \Lambda^{-1}} \right)$ and the  result follows.
\end{proof}

\subsection{Gaussian process classification}

\begin{proof}[Proof of Lemma~\ref{lem:gp2}]
    In view of Theorem~7.1 in~\cite{GGV}, it suffices to show that 
     \[N\left( \e, \Theta_n(\e),d_\text{TV} \right) \leq e^{\bar C n\e^2},\] for some $\bar C>0$. Now, first observe that 
     \begin{align}
         \nonumber
         d_{\text{TV}}(P_{\theta(y)},P_{\theta_0(y)}) &= \frac{1}{2} \bbE_{\nu}\left( |\Psi_1(\theta(y))-\Psi_1(\theta_0(y)) |  + |\Psi_{-1}(\theta(y))-\Psi_{-1}(\theta_0(y)) | \right)
         \\
         \nonumber
         &=  \bbE_{\nu}\left( |\Psi_1(\theta(y))-\Psi_1(\theta_0(y)) |  \right)
         \\
         &\leq \bbE_{\nu}\left( |\theta(y)-\theta_0(y) |  \right) \leq \|\theta(y)-\theta_0(y) \|_{\infty}, 
         \end{align}
    where the second equality uses the definition of $\Psi_{-1}(\cdot)$. Since, total-variation distance above is bounded above by supremum norm, there exists  a constant $0<c'<1/2$, such that
      \begin{align}
          N\left( \e, \Theta_n(\e),d_\text{TV} \right) \leq N\left( c'\e, \Theta_n(\e),\|\cdot\|_{\infty} \right) \leq  e^{\frac{2}{3} c'^2 C_{10}n\e^2},
          \end{align}
      where the last inequality follows from~\eqref{eq:testGP} in Lemma~\ref{lem:GP}. 
    Then if follows from Theorem~7.1 in~\cite{GGV} that for every $\e> \e_n$, there exists a test $\phi_n$ (depending on $\e>0$) such that, for every $j\geq 1$,
    \begin{align*}
        \bbE_{P^{n}_0}[ \phi_n ]  &\leq e^{\frac{2}{3} c'^2 C_{10}n\e^2} e^{ -\frac{1}{2}n \e^2 }\frac{1}{1-\exp \left (-\frac{1}{2} n\e^2 \right)}, \text{and}\\
        \underset{\{ \theta\in \Theta_n(\e)  : d_{TV}(P_{\theta},P_{\theta_0}) > j \e  \}}{\sup} \bbE_{P^{n}_{\theta}}[ 1- \phi_n ] &\leq  \exp\left(- \frac{1}{2} n \e^2 j\right).
    \end{align*} 
    Now for all $n$ such that $n\e^2>n\e_n^2>2\log2$ and $C_{10}=c'^{-2}/4>1$ and $j=1$, we have
    \begin{align}
        \label{eq:eqt1}
        \bbE_{P^{n}_0}[ \phi_n ]  &\leq 2  e^{ -\frac{1}{3}n \e^2 }, \text{and}\\
        \underset{\{ \theta\in \Theta_n(\e)  : d_{TV}(P_{\theta},P_{\theta_0}) >  \e  \}}{\sup} \bbE_{P^{n}_{\theta}}[ 1- \phi_n ] 
        &\leq  e^{- \frac{1}{2} n \e^2 }\leq e^{ -\frac{1}{3}n \e^2 } .
        \label{eq:eqt2}
    \end{align} 
    
    Now observe that 
    \begin{align}
        \nonumber
        \sup_{a \in \mathcal{A}}& | G(a,\theta) - G(a,\theta_0)  | \\
        \nonumber  
        &= \max\left(  c_+ |\bbE_{\nu}[\Psi_{-1}(\theta(y))] -\bbE_{\nu}[\Psi_{-1}(\theta_0(y))] |   , c_- |\bbE_{\nu}[\Psi_{1}(\theta(y))] -\bbE_{\nu}[\Psi_{1}(\theta_0(y))] |\right) 
        \\
        \nonumber
        &= \max\left(  c_+ |\bbE_{\nu}[\Psi_{1}(\theta_0(y))] -\bbE_{\nu}[\Psi_{1}(\theta(y))] |   , c_- |\bbE_{\nu}[\Psi_{1}(\theta(y))] -\bbE_{\nu}[\Psi_{1}(\theta_0(y))] |\right) 
        \\
        \nonumber
        &= \max(c_+,c_-)|\bbE_{\nu}[\Psi_{1}(\theta_0(y))] -\bbE_{\nu}[\Psi_{1}(\theta(y))] |
        \\
        \nonumber
        &\leq \max(c_+,c_-)\bbE_{\nu}[|\Psi_{1}(\theta_0(y)) -\Psi_{1}(\theta(y))| ]
        \\
        &\leq  \max(c_+,c_-)d_{TV}(P_{\theta},P_{\theta_0})
    \end{align} 
    where the second equality uses the fact that $\Psi_{-1}(\cdot)= 1-\Psi_{1}(\cdot)$.
     Consequently,
    \[ \{ \theta\in \Theta_n(\e)  : \sup_{a \in \mathcal{A}} | G(a,\theta) - G(a,\theta_0)  | > { \max(c_+,c_-)\e} \} \subseteq \{ \theta\in \Theta_n(\e)  : d_{TV}(P_{\theta},P_{\theta_0}) >  \e  \} \] Therefore, it follows from~\eqref{eq:eqt1} and~\eqref{eq:eqt2} and the  definition of $L_n(\theta,\theta_0)$ that 
    \begin{align}
        \bbE_{P^{n}_0}[ \phi_n ]  &\leq 2  e^{ -\frac{1}{3}n \e^2 }, \text{and}\\
        \underset{\{ \theta\in \Theta_n(\e)  : L_n(\theta,\theta_0) > ( \max(c_+,c_-))^2 n \e^2  \}}{\sup} \bbE_{P^{n}_{\theta}}[ 1- \phi_n ] 
        &\leq  e^{- \frac{1}{2} n \e^2 }\leq e^{ -\frac{1}{3}n \e^2 } .
    \end{align} 
    Finally, the result follows for $C=1/3$, $C_0=2$ and $C_1= (\max(c_+,c_-))^2$.

\end{proof}

\begin{proof}[Proof of Lemma~\ref{lem:gp3}]
    The R\'enyi divergence 
    \begin{align}
    \nonumber
        &D_{1+\lambda}(P_0^n\| P_{\theta}^n) 
        \\
        \nonumber
        &=  n \frac{1}{\lambda} \ln \int \left( \Psi_{1}(\theta_0(y))^{1+\lambda}\Psi_{1}(\theta(y))^{-\lambda} + \Psi_{-1}(\theta_0(y))^{1+\lambda}\Psi_{-1}(\theta(y))^{-\lambda}  \right) \nu(dy)
        \\
        &= n \frac{1}{\lambda} \ln \int e^{\lambda \frac{1}{\lambda}\ln \left( \Psi_{1}(\theta_0(y))^{1+\lambda}\Psi_{1}(\theta(y))^{-\lambda} + \Psi_{-1}(\theta_0(y))^{1+\lambda}\Psi_{-1}(\theta(y))^{-\lambda}  \right) } \nu(dy).
        \label{eq:gpRenyi}
    \end{align}
    
    Note that  the derivative of the  exponent in the integrand  above with respect  to  $\theta(y)$   is 
    \begin{align} 
        \nonumber
        &\frac{\left(-  \lambda  \Psi_{1}(\theta_0(y))^{1+\lambda}\Psi_{1}(\theta(y))^{-\lambda-1} \psi(\theta(y)) +\lambda \Psi_{-1}(\theta_0(y))^{1+\lambda}\Psi_{-1}(\theta(y))^{-\lambda-1} \psi(\theta(y))  \right)}{ \left(   \Psi_{1}(\theta_0(y))^{1+\lambda}\Psi_{1}(\theta(y))^{-\lambda} + \Psi_{-1}(\theta_0(y))^{1+\lambda}\Psi_{-1}(\theta(y))^{-\lambda}   \right)  } 
        \\
        \nonumber
        &= \lambda \psi(\theta(y)) \frac{\left(-   \Psi_{1}(\theta_0(y))^{1+\lambda}\Psi_{1}(\theta(y))^{-\lambda-1} + \Psi_{-1}(\theta_0(y))^{1+\lambda}\Psi_{-1}(\theta(y))^{-\lambda-1}   \right) }{ \left(   \Psi_{1}(\theta_0(y))^{1+\lambda}\Psi_{1}(\theta(y))^{-\lambda} + \Psi_{-1}(\theta_0(y))^{1+\lambda}\Psi_{-1}(\theta(y))^{-\lambda}   \right)  } 
        \\
        \nonumber
        &= \lambda \frac{\psi(\theta(y))}{\Psi_{1}(\theta(y)) \Psi_{-1}(\theta(y))} \frac{\left(-   \Psi_{1}(\theta_0(y))^{1+\lambda}\Psi_{-1}(\theta(y))^{\lambda+1} + \Psi_{-1}(\theta_0(y))^{1+\lambda}\Psi_{1}(\theta(y))^{\lambda+1}   \right) }{ \left(   \Psi_{1}(\theta_0(y))^{1+\lambda}\Psi_{-1}(\theta(y))^{\lambda} + \Psi_{-1}(\theta_0(y))^{1+\lambda}\Psi_{1}(\theta(y))^{\lambda}   \right)  } 
        \\
        \nonumber
        &= \lambda  \frac{\left(-   \Psi_{1}(\theta_0(y))^{1+\lambda}\Psi_{-1}(\theta(y))^{\lambda+1} + \Psi_{-1}(\theta_0(y))^{1+\lambda}\Psi_{1}(\theta(y))^{\lambda+1}   \right) }{ \left(   \Psi_{1}(\theta_0(y))^{1+\lambda}\Psi_{-1}(\theta(y))^{\lambda} + \Psi_{-1}(\theta_0(y))^{1+\lambda}\Psi_{1}(\theta(y))^{\lambda}   \right)  } 
        \\
        \nonumber
        &= \lambda  \frac{\left(-   e^{-(\lambda+1)\theta(y)} + e^{-(1+\lambda)\theta_0(y)}\right) }{ \left(   e^{-\lambda\theta(y)} + e^{-(\lambda+1)\theta_0(y)}  \right) (1+e^{-\theta(y)})  } 
        \\
        \nonumber
        &= \lambda  \frac{ e^{-(1+\lambda)\theta_0(y)} \left(1 -   e^{-(\lambda+1)(\theta(y)-\theta_0(y))} \right) }{ \left(   e^{-\lambda\theta(y)} + e^{-(\lambda+1)\theta_0(y)}  \right) (1+e^{-\theta(y)})  } 
        \\
        \nonumber   
        &\leq \lambda  \frac{ (\lambda+1)  (\theta(y)-\theta_0(y))  }{ \left(   e^{-\lambda\theta(y)+(\lambda+1){\theta_0(y)}} + 1 \right) (1+e^{-\theta(y)})  } 
        \\
        &\leq \lambda  { (\lambda+1)  |\theta(y)-\theta_0(y)|  },
    \end{align}
    where in the fourth equality we used definition of the logistic function and the penultimate inequality follows from the well known inequality that $1-e^{-x}\leq x$. Consequently, using Taylor's theorem it follows that the exponent in the integrand of the R\'enyi divergence in~\eqref{eq:gpRenyi} is bounded above by $\lambda(\lambda+1)|\theta(y)-\theta_0(y)|^2 $  and thus by $\lambda(\lambda+1)\|\theta(y)-\theta_0(y)\|^2_{\infty} $. Therefore, 
    \begin{align*}
        D_{1+\lambda}&(P_0^n\| P_{\theta}^n) 
        \\
        &=  n \frac{1}{\lambda} \ln \int \left( \Psi_{1}(\theta_0(y))^{1+\lambda}\Psi_{1}(\theta(y))^{-\lambda} + \Psi_{-1}(\theta_0(y))^{1+\lambda}\Psi_{-1}(\theta(y))^{-\lambda}  \right) \nu(dy)
        \\
        &\leq n \frac{1}{\lambda} \ln \int e^{\lambda(\lambda+1)\|\theta(y)-\theta_0(y)\|^2_{\infty}} \nu(dy)
        \\
        &= n (\lambda+1)\|\theta(y)-\theta_0(y)\|^2_{\infty}.
    \end{align*}
    
    Now using the inequality for $C_3=16(\lambda+1)$ above observe that
    \begin{align}
        \nonumber
        \Pi(A_n)&=  \Pi(D_{1+\lambda} \left( P_0^n \| P_{\theta}^n \right) \leq C_3 n \e_n^2)
        \\
        \nonumber
        &\geq \Pi( n(\lambda+1)\|\theta(y)-\theta_0(y)\|^2_{\infty} \leq C_3 n\e_n^2  )
        \\
        &= \Pi( \|\theta(y)-\theta_0(y)\|_{\infty} \leq 4\e_n  )
        \geq  e^{-n\e_n^2}
    \end{align}
    and the  result follows from~\eqref{eq:renyiGP} of~Lemma~\ref{lem:GP}.

\end{proof}

\begin{proof}[Proof of Lemma~\ref{lem:gp4}]
    Let us first analyze  the $\scKL$  divergence between the prior  distribution and variational family.
    Recall that  two Gaussian measures on infinite dimensional spaces  are either equivalent or singular.~\cite[Theorem 6.13]{Stuart2010} specify the condition  required  for the two Gaussian measures to be equivalent. In particular, note that  $\theta_0^J(\cdot)\in \text{Im}(\mathcal{C}^{1/2})$. Now observe that the covariance operator of $Q_n$ has eigenvalues ${\{\zeta^2_j\}_{j=1}^{J}}_{k=1}^{2^{jd}}$, therefore operator $S$ in the definition of $\mathcal{C}_q$ has eigenvalues ${\{1-\zeta^2_j/\mu_j^2\}_{j=1}^{J}}_{k=1}^{2^{jd}}$. For $\tau_j^2=2^{-2ja-jd}$ for any $a>0$,  $\sum_{j=1}^{J} 2^{jd} \left(\frac{n\e_n^2 2^{-2ja-jd} }{1+ n\e_n^2 2^{-2ja-jd} } \right)^2 = \sum_{j=1}^{J} 2^{-jd} \left(\frac{n\e_n^2 2^{-2ja} }{1+ n\e_n^2 2^{-2ja-jd} } \right)^2 <\infty$, therefore $S$ is an HS operator. 
    %
    
    For any integer $J\leq J_{\alpha}$  define $\bar \theta_0^J = \int \theta_0^J(y) \nu(dy)$, where $\theta_0^J(\cdot)= \sum_{j=1}^{J} $ $\sum_{k=1}^{2^{jd}} \theta_{0;j,k}  \vartheta_{j,k}(\cdot) $. Since, $\theta_0^J(\cdot)\in \text{Im}(\mathcal{C}^{1/2})$ and  $S$ is a  symmetric and HS operator, we invoke Theorem 5 in~\cite{quang2019}, to write  
    \begin{align*}
        \scKL(\mathcal{N}(\bar\theta_0^J,\mathcal{C}_q)\|\mathcal{N}(0,\mathcal{C})) &= \frac{1}{2}\| \mathcal{C}^{-1/2}\bar \theta_0^J \|^2 - \frac{1}{2} \log \det(I-S) + \frac{1}{2} tr(-S) ,
        \\
        &= \frac{1}{2}  \sum_{j=1}^{J} \sum_{k=1}^{2^{jd}} \frac{ \theta_{0;k,j}^2}{\mu_j^2} -\frac{1}{2} \log \prod_{j=1}^{J} \prod_{k=1}^{2^{jd}} (1-\kappa_{j}^2) -\frac{1}{2} \sum_{j=1}^{J} \sum_{k=1}^{2^{jd}} \kappa_j^2
        \\
        &= \frac{1}{2}  \sum_{j=1}^{J} \sum_{k=1}^{2^{jd}} \frac{ \theta_{0;k,j}^2}{\mu_j^2} -\frac{1}{2} \log \prod_{j=1}^{J}  (1-\kappa_{j}^2)^{2^{jd}} -\frac{1}{2} \sum_{j=1}^{J} 2^{jd} \kappa_j^2
        \\
        &= \frac{1}{2}  \sum_{j=1}^{J} \sum_{k=1}^{2^{jd}} \frac{ \theta_{0;k,j}^2}{\mu_j^2} -\frac{1}{2}   \sum_{j=1}^{J} {2^{jd}} \log (1-\kappa_{j}^2) -\frac{1}{2} \sum_{j=1}^{J} 2^{jd} \kappa_j^2.
    \end{align*}
    Now for $\mu_j 2^{jd/2}=2^{-ja}$, and using the definition of  Besov norm of $\theta_0$ denoted as $\|\theta_0\|^2_{\beta,\infty,\infty}$, and denoting $1-\kappa_j^2 = \frac{1}{1+n\e_n^2 \tau_j^2}$, we  have 
    \begin{align*}
        \scKL&(\mathcal{N}(\bar\theta_0^J,\mathcal{C}_q)\|\mathcal{N}(0,\mathcal{C})) 
        \\
        &\leq \frac{1}{2}  \sum_{j=1}^{J} {2^{j(2a-2\beta+d)}} \|\theta_0\|^2_{\beta,\infty,\infty}  -\frac{1}{2}   \sum_{j=1}^{J} {2^{jd}} \log (1-\kappa_{j}^2) -\frac{1}{2} \sum_{j=1}^{J} 2^{jd} \kappa_j^2
        \\
        &= \frac{1}{2}  \sum_{j=1}^{J} {2^{j(2a-2\beta+d)}} \|\theta_0\|^2_{\beta,\infty,\infty} - \frac{1}{2}  \sum_{j=1}^{J} 2^{jd} \left( \log (1-\kappa_{j}^2)    +  \kappa_j^2 \right)
        \\
        &= \frac{1}{2}  \sum_{j=1}^{J} {2^{j(2a-2\beta+d)}} \|\theta_0\|^2_{\beta,\infty,\infty} + \frac{1}{2}  \sum_{j=1}^{J} 2^{jd} \left( \log (1+n\e_n^2\tau_{j}^2)    - \frac{n\e_n^2\tau_j^2}{1+n\e_n^2\tau_j^2} \right)
        \\
        &\leq \frac{1}{2}  \sum_{j=1}^{J} {2^{j(2a-2\beta+d)}} \|\theta_0\|^2_{\beta,\infty,\infty} + \frac{1}{2}  \sum_{j=1}^{J} 2^{jd} \left(  n\e_n^2\tau_j^2 \right),
    \end{align*}
    where the last inequality follows from the  fact that, $\log(1+x)-\frac{x}{1+x}\leq \frac{x^2}{1+x}\leq x$ for $x>0$.
    Substituting $\tau_j^2= 2^{-2ja-jd}$, we have
    \begin{align*}
        \frac{1}{n}\scKL(\mathcal{N}(\bar \theta_0^J,\mathcal{C}_q)\|\mathcal{N}(0,\mathcal{C})) &\leq \frac{1}{2n}  \sum_{j=1}^{J} {2^{j(2a-2\beta+d)}} \|\theta_0\|^2_{\beta,\infty,\infty} + \frac{\e_n^2}{2}  \sum_{j=1}^{J} 2^{-2ja} 
        \\
        & \leq \frac{\|\theta_0\|^2_{\beta,\infty,\infty}}{2n}  \sum_{j=1}^{J} {2^{j(2a-2\beta+d)}}  + \frac{2^{-2a}}{2}   \frac{1-2^{-2Ja} }{1-2^{-2a} } \e_n^2.
    \end{align*}
    The summation in the first term above is bounded by $\e_n^2$ as derived in~\cite[Theorem 4.5]{Zanten08}. Therefore,
    \begin{align}
        \frac{1}{n}\scKL(\mathcal{N}(\bar \theta_0^J,\mathcal{C}_q)\|\mathcal{N}(0,\mathcal{C})) 
        & \leq \max\left({\|\theta_0\|^2_{\beta,\infty,\infty}} ,   \frac{2^{-2a}-2^{-2Ja-2a} }{1-2^{-2a} }  \right)   \e_n^2.
        \label{eq:eqKL1}
    \end{align}
    Now consider the second term 
    \begin{align*}
        \frac{1}{n}&\bbE_{Q_n} \scKL(P_0^n\| P_{\theta}^n) 
        \\
        &=  \bbE_{Q_n} \int \left( \Psi_{1}(\theta_0(y)) \log\frac{\Psi_{1}(\theta_0(y))}{\Psi_{1}(\theta(y))} + \Psi_{-1}(\theta_0(y)) \log\frac{\Psi_{-1}(\theta_0(y))}{\Psi_{-1}(\theta(y))}  \right) \nu(dy)
        \\
        &\leq \bbE_{Q_n} \int \langle \theta(y)-\theta_0(y),\theta(y)-\theta_0(y)\rangle  \nu(dy)
        \\
        &= \bbE_{Q_n} \int \| \theta(y)-\theta_0^J(y) - ( \theta_0(y)-\theta_0^J(y))\|_2^2 \nu(dy)
        \\
        &= \bbE_{Q_n} \int \| \theta(y)-\theta_0^J(y)\|^2_2 + \| \theta_0(y)-\theta_0^J(y))\|_2^2 -2\langle \theta(y)-\theta_0^J(y),\theta_0(y)-\theta_0^J(y)\rangle  \nu(dy)
        \\
        &\leq \bbE_{Q_n} \int \| \theta(y)-\theta_0^J(y)\|^2_2  \nu(dy) + \| \theta_0(y)-\theta_0^J(y))\|^2_{\infty}
        \\
        &= \bbE_{Q_n} \int | \sum_{j=1}^{J} \sum_{k=1}^{2^{jd}} \zeta_j Z_{j,k} \vartheta_{j,k}(y)|^2  \nu(dy) + \| \theta_0(y)-\theta_0^J(y))\|^2_{\infty}
        \\
        &\leq \bbE_{Q_n} \sum_{j=1}^{J} \sum_{k=1}^{2^{jd}} \zeta_j^2 Z_{j,k}^2 \int  \vartheta_{j,k}(y)^2  \nu(dy) + \| \theta_0(y)-\theta_0^J(y))\|^2_{\infty} 
        \\
        &= \sum_{j=1}^{J} \sum_{k=1}^{2^{jd}} \zeta_j^2 \bbE_{Q_n}[Z_{j,k}^2] + \| \theta_0(y)-\theta_0^J(y))\|^2_{\infty}
        \\
        &=\sum_{j=1}^{J} \sum_{k=1}^{2^{jd}}   {\mu_j^2(1-\kappa_j^2)} + \| \theta_0(y)-\theta_0^J(y))\|^2_{\infty}
        \\
        &= \sum_{j=1}^{J} {2^{jd}}   \frac{\mu_j^2}{1+n\e_n^2\tau_j^2} + \| \theta_0(y)-\theta_0^J(y))\|^2_{\infty}
        \\
        &\leq   \frac{1}{n\e_n^2} \sum_{j=1}^{J}  \frac{2^{-2ja}}{  \tau_j^2}  + \| \theta_0(y)-\theta_0^J(y))\|^2_{\infty} 
        \\
        &= \frac{1}{n\e_n^2} \sum_{j=1}^{J} 2^{jd} + \| \theta_0(y)-\theta_0^J(y))\|^2_{\infty} 
        \\
        &=\frac{2^d}{n\e_n^2}  \frac{2^{dJ} -1 }{2^{d}-1}  + \| \theta_0(y)-\theta_0^J(y))\|^2_{\infty} 
        \\
        &\leq  \frac{2^d/(2^d-1)}{(\log n )^2}   + C' \e_n^2,
    \end{align*}
    where in the second inequality we used the second assertion of Lemma 3.2~\cite{Zanten08} for logistic function, the fifth  inequality uses  the fact that $\theta(y)-\theta_0^J(y)$ is orthogonal to $\theta_0(y)-\theta_0^J(y)$. For any $a\leq \alpha$ fix $J=J_{\alpha}$ otherwise $J=J_a$, and then it is straight forward to check from the definition of $\e_n$ given in the assertion of the theorem that $(2^{dJ-1}/n\e_n^2) \leq (\log n)^{-2}$. The term  $\| \theta_0(y)-\theta_0^J(y))\|^2_{\infty} $ is also bounded by $C'\e_n^2$ as shown in the proof of Theorem 4.5 in~\cite{Zanten08}. Consequently, the term $\frac{1}{n}\bbE_{Q_n} \scKL(P_0^n\| P_{\theta}^n) $ is bounded above by $\e_n^2$ (upto a constant) for sufficiently large $n$ since $(\log n)^{-2}< \e_n^2$ and the result follows.
\end{proof}

\end{document}